\newtheorem{theorem}{Theorem}[section]
\newtheorem{lemma}[theorem]{Lemma}
\newtheorem{proposition}[theorem]{Proposition}
\newtheorem{remark}[theorem]{Remark}
\newtheorem{definition}[theorem]{Definition}
\newtheorem{assumption}{Assumption}
\newcommand{\eremk}{\hbox{}\hfill\rule{0.8ex}{0.8ex}}
\numberwithin{equation}{section}
\definecolor{myblue}{rgb}{0,0,0.6}   
\newcommand{\pdt}[1]{%
\partial_{
  \begingroup
  \my@repeat@count=\z@
  \@whilenum\my@repeat@count<#1\do{t\advance\my@repeat@count\@ne}%
  \endgroup}
}
\newcommand{\purple}[1]{\textcolor{purple}{#1}}
\newcommand{\Ca}{C_{\alpha}}
\newcommand{\CS}{C_S}
\newcommand{\Ctr}{C_{\mathrm{tr}}}
\newcommand{\Cp}{C_{\Pi}}
\newcommand{\Cinv}{C_{\mathrm{inv}}}
\newcommand{\hmin}{h_{\mathrm{min}}}
\newcommand{\diam}{\mathrm{diam}}
\newcommand{\Id}{\mathsf{Id}}
\newcommand\hK{h_K}
\newcommand\In{I_n}
\newcommand{\Kn}{K_n}
\newcommand\SO{\Sigma_0}
\newcommand\ST{\Sigma_T}
\newcommand\Sn{\Sigma_n}
\newcommand\Qn{Q_n}
\newcommand\tnmo{t_{n-1}}
\newcommand\tn{t_n}
\newcommand\tm{t_m}
\newcommand\Th{\mathcal{T}_h}
\newcommand\Tt{\mathcal{T}_{\tau}}
\newcommand{\dV}{\text{d}V}
\newcommand\dx{\mathrm{d}\bx}
\newcommand\ds{\matrhm{d}s}
\newcommand\dt{\mathrm{d}t}
\newcommand{\Ulm}{\mathcal{U}^{\ell, m}}
\newcommand{\Bht}{\mathcal{B}_{h, \tau}}
\newcommand{\Bhtt}{\widetilde{\mathcal{B}}_{h, \tau}}
\newcommand\uht{u_{h, \tau}}
\newcommand{\Vhp}{\mathcal{V}_{h}^p}
\newcommand{\Vtq}{\mathcal{V}_{\tau}^q}
\newcommand{\Vht}{\mathcal{V}_{h, \tau}^{p, q}}
\newcommand{\Wht}{\mathcal{W}_{h, \tau}^{p, q - 1}}
\newcommand{\wt}{w_{\tau}}
\newcommand{\zh}{z_h}
\newcommand{\vh}{v_h}
\newcommand{\wht}{w_{h, \tau}}
\newcommand\wh{w_h}
\newcommand\alphaht{\alpha_{h, \tau}}
\newcommand{\la}{\underline{\alpha}}
\newcommand{\ua}{\overline{\alpha}}
\def\Cpione{C_{\Piht}^{(1)}}
\def\Cpitwo{C_{\Piht}^{(2)}}
\newlength{\dhatheight}
\newcommand\IR{\mathbb{R}}
\newcommand\IN{\mathbb{N}}
\newcommand\IZ{\mathbb{Z}}
\newcommand\QT{Q_T}
\newcommand\bx{\boldsymbol{x}}
\newcommand\dpt{\partial_t}
\newcommand\dptt{\partial_{tt}}
\newcommand\calD{\mathcal{D}}
\newcommand{\Tnorm}[1]{|\!|\!|#1|\!|\!|}
\newcommand{\Tnormd}[1]{|\!|\!|#1|\!|\!|_{\delta}}
\newcommand{\Norm}[2]{\|#1\|_{#2}}
\newcommand{\SemiNorm}[2]{\left|#1\right|_{#2}}
\newcommand{\jump}[1]{\llbracket #1\rrbracket}
\newcommand{\Pp}[2]{\mathbb{P}^{#1}(#2)}
\newcommand{\Ih}{\mathcal{I}_h}
\newcommand{\Pt}{\mathcal{P}_{\tau}}
\newcommand{\Rh}{\mathcal{R}_h}
\newcommand{\Piht}{\Pi_{h\tau}}
\newcommand{\eu}{e_u}
\newcommand{\epi}{e_{\Pi}}
\def\uhtdelta{\uht^{(\delta)}}
\def\uhtzero{\uht^{(0)}}
\def\uhtone{\uht^{(1)}}
\def\uhttwo{\uht^{(2)}}
\def\ball{\mathcal{B}}
\def \Cballone{C_{\ball}^{(1)}}
\def \Cballtwo{C_{\ball}^{(2)}}
\def \Clinone{C_{\textup{lin}}^{(1)}}
\def \Clintwo{C_{\textup{lin}}^{(2)}}
\def\alphahtone{\alphaht^{(1)}}
\def\alphahttwo{\alphaht^{(2)}}
\def\Cwellp{C_{\textup{wellp}}}
\def\Cwellpone{C^{(1)}_{\textup{wellp}}}
\def\Cwellptwo{C^{(2)}_{\textup{wellp}}}
\def\buht{\bar{u}_{h,\tau}}
\def\balphaht{\bar{\alpha}_{h,\tau}}
\def\bdelta{\bar{\delta}}
\def\calT{\mathcal{T}}
\def\Linf{L^\infty(\Omega)}
\def\Ltwo{L^2(\Omega)}
\def\ds{\, \textup{d}s}
\def\mapping{\mathcal{T}}
\def\ball{\mathbb{B}}
\newcommand{\Honethree}{{H_\diamondsuit^3(\Omega)}}
\def\La{\mathfrak{L}_{\alpha}}
\def\CapproxI{C_{\textup{approx}}^{\Ih{}}}
\def\CapproxPt{C_{\textup{approx}}^{\Pt}}
\def\CstabI{C_{\textup{stab}}^{\Ih{}}}
\def\CstabRh{C_{\textup{stab}}^{\Rh{}}}
\def\CstabPt{C_{\textup{stab}}^{\Pt{}}}
\def\CapproxPt{C_{\textup{approx}}^{\Pt{}}}
\def\Hlplusone{H^{\ell+1}(\Omega)}
\def\CapproxRh{C^{\Rh}_{\textup{approx}}}
\def\Ltwod{L^2(\Omega)^d}
\title{Combined DG--CG finite element method for\\ the Westervelt equation}
\author[1,2]{Sergio G\'omez\,\orcidlink{0000-0001-9156-5135}}
\author[2]{Vanja Nikoli\'c\,\orcidlink{0009-0003-7610-2900}}
\date{}
\affil[1]{\small Department of Mathematics and Applications, University of Milano-Bicocca, Via Cozzi 55, 20125 Milan, Italy (\href{mailto:sergio.gomezmacias@unimib.it}{sergio.gomezmacias@unimib.it})}
\affil[2]{\small  {IMATI-CNR ``E. Magenes", Via Ferrata 5, 27100 Pavia, Italy}}
\affil[3]{\small Department of Mathematics,
		Radboud University,      
		Heyendaalseweg 135,    
		6525 AJ Nijmegen, The Netherlands (\href{mailto:vanja.nikolic@ru.nl}{vanja.nikolic@ru.nl})}  
\begin{document}

\maketitle

\begin{abstract}
\noindent 
We propose and analyze a space--time finite element method for Westervelt's quasilinear model of ultrasound waves in second-order formulation.
The method combines conforming finite element spatial discretizations with a discontinuous-continuous Galerkin time stepping. Its analysis is challenged by the fact that standard Galerkin testing approaches for wave problems do not allow for bounding the discrete energy at all times.
By means of redesigned energy arguments for a linearized problem combined with Banach's fixed-point argument, we show the well-posedness of the scheme, \emph{a priori} error estimates, and robustness with respect to the strong damping parameter~$\delta$.
Moreover, the scheme preserves the asymptotic preserving property of the continuous problem; more precisely, we prove that the discrete solutions corresponding to~$\delta>0$ converge, in the singular vanishing dissipation limit, to the solution of the discrete inviscid problem. We use several numerical experiments in~$(2 + 1)$ dimensions to validate our theoretical results.

\end{abstract}

\paragraph{Keywords.} Westervelt's equation, space--time method, discontinuous-continuous Galerkin time stepping, asymptotic-preserving method

\paragraph{Mathematics Subject Classification.} 65M12, 65M60, 35L72

\section{Introduction \label{sec:introduction}}
Westervelt's equation~\cite{westervelt1963parametric} is a classical model of nonlinear acoustic and ultrasonic wave propagation. It is especially attractive for simulating medical and industrial applications of ultrasound as it does not assume any directionality of the wave field. 
In this work, we develop and analyze a space--time finite element approach for its discretization in the framework of the~DG--CG scheme introduced in~\cite{Walkington:2014} that combines continuous and discontinuous Galerkin time-stepping methodologies.

We consider a space--time 
cylinder~$\QT = \Omega \times (0, T)$, where~$\Omega \subset \IR^d$ ($d \in \{1, 2, 3\}$) is an open, bounded polytopic domain with Lipschitz boundary~$\partial \Omega$, and~$T > 0$ is the final propagation time.
We define the surfaces~$\SO := \Omega \times \{0\}$ and~$\ST := \Omega \times \{T\}$. Given a medium-dependent nonlinearity parameter~$k \in \IR$, a sound diffusivity coefficient~$\delta \geq 0$, the speed of sound~$c > 0$, a source term~$f: \QT \rightarrow \IR$, and initial data~$u_0,\, u_1 : \Omega \rightarrow \IR$,
we consider the following Westervelt IBVP: find the acoustic pressure~$u : \QT \rightarrow \IR$ such that
\begin{equation}
\label{eq:Westervelt-IBVP}
\begin{cases}
\dpt((1 + k u) \dpt u) - c^2 \Delta u - \delta \Delta(\dpt u) = f  & \text{ in } \QT,\\
u = 0 & \text{ on } \partial \Omega \times (0, T),\\
u = u_0  \quad \text{ and } \quad \dpt u = u_1 & \text{ on } \SO.
\end{cases}
\end{equation}
It is known that the damping parameter plays a decisive role in the behavior of solutions to~\eqref{eq:Westervelt-IBVP}. When $\delta>0$, Westervelt's equation has a parabolic character due to the strong damping $-\delta \Delta \dpt u$, which unlocks the global well-posedness of \eqref{eq:Westervelt-IBVP} with an exponential decay of the energy of the system, as shown in~\cite{kaltenbacher2009global}. In the inviscid hyperbolic case where $\delta=0$, on the other hand, one can expect only the local existence of solutions; see~\cite{dorfler2016local}. Given that the damping parameter is often small in practical scenarios, it becomes vital to understand how the solutions behave as $\delta$ approaches zero. The question is complex due to the quasilinear nature of \eqref{eq:Westervelt-IBVP}. 
Such a singular limit analysis for \eqref{eq:Westervelt-IBVP} can be found in~\cite{kaltenbacher2022parabolic}. More recently, robustness with respect to the sound diffusivity has also become a prominent issue when considering numerical schemes for \eqref{eq:Westervelt-IBVP} and other nonlinear acoustic models; see~\cite{meliani2024mixed, gomez2024asymptotic, Nikolic:2023, Dorich_Nikolic:2024}. These works have built upon the insights into the rigorous numerical investigations of the inviscid Westervelt equation in, e.g.,~\cite{dorich2024strong, hochbruck2022error, maier2023error} and the strongly damped equation in, e.g.,~\cite{nikolic2019priori}. The second major goal of this work is to shed further light on this problem by establishing the conditions under which the developed DG--CG method is robust with respect to $\delta$ and furthermore preserves the limiting behavior of the exact solutions in the vanishing dissipation limit.

We are particularly interested in fully discrete schemes based on Galerkin-type time discretizations, as they can be easily interpreted as space--time methods.
Some of the appealing properties of space--time methods are that \vspace*{-1mm}
\begin{itemize}
\item[\emph{i)}] they allow for simultaneous high-order convergence in space and time, \vspace*{-1mm}
\item[\emph{ii)}] they can be analyzed in a discrete variational setting closer to the continuous one, and \vspace*{-1mm}
\item[\emph{iii)}] the error analysis does not rely on Taylor expansions, so they typically require weaker regularity assumptions than most standard time-stepping schemes.\vspace*{-1mm}
\end{itemize}

Galerkin-type time stepping schemes for the wave equation can be classified in two groups: those based on a first-order-in-time system (i.e., introducing an auxiliary variable~$v = \dpt u$) and those defined directly for the second-order formulation.
The first group includes the discontinuous Galerkin method by Johnson~\cite{Johnson:1993} and the continuous Petrov--Galerkin method by French and Peterson~\cite{French_Peterson:1996} (see also its more robust analysis in~\cite{Bales_Lasiecka:1994,Bales_Lasiecka:1995,Gomez:2025}).
Both methods are unconditionally stable without requiring the presence of a damping term; however, at the expense of doubling the number of degrees of freedom.
The DG time discretizations by Hulbert and Hughes in~\cite{hulbert1990space} and by French in~\cite{French:1993} belong to the second group. 
In order to obtain {well-posedness}, the former uses a least-squares stability term, whereas the latter
requires employing explicit exponential weights in the inner products. In~\cite{Antonietti_et_all:2018}, Antonietti \emph{et al.} proposed and analyzed a DG time discretization for the second-order formulation relying on the presence of a first-order damping term. 
Recently, in~\cite{Shao:2022}, the DG method from~\cite{Antonietti_et_all:2018} was applied to a class of second-order nonlinear hyperbolic equations by transforming the original problem in terms of a new variable~$w = e^{-\gamma t} u$ so as to obtain the necessary damping term in the transformed problem.

Our method is based on the combined DG--CG scheme developed for the linear wave equation by Walkington in~\cite{Walkington:2014}, which provides a discrete solution that is continuous in time, but with a possibly discontinuous first-order time derivative.
The method in~\cite{Walkington:2014} is applied directly to the second-order wave formulation and does not require the presence of a damping term to guarantee well-posedness.
Moreover, unconditional stability and optimal convergence can be shown in 
$L^{\infty}(0, T; X)$-type norms. 
Consequently, the scheme is endowed with good stability and approximation properties, and requires fewer degrees of freedom than the methods based on the Hamiltonian formulation~\cite{Johnson:1993,French_Peterson:1996}.
An additional important advantage for linear wave problems is that this method does not impose a CFL-type restriction on the time step.
Theorem~\ref{thm:linear-error-bound} below shows that this advantage extends to damped linear problems with a variable leading coefficient.

\paragraph{Main contributions.} 
Our first main contribution pertains to advancing the theoretical framework of the DG--CG finite element methodology from~\cite{Walkington:2014} to accommodate quasilinear wave models in the form of \eqref{eq:Westervelt-IBVP}. In addition, to the best of our knowledge, this work contains the first rigorous space--time finite element analysis of the Westervelt equation. Specifically, we determine the sufficient conditions for the discretization and exact solution $u$, that ensure that the discrete solution $\uht$ satisfies the following \emph{a priori} bounds:
 \begin{equation} \label{final est}
     \begin{aligned}
         \Norm{\dpt (u-\uht) }{L^{\infty}(0, T; L^2(\Omega))} 
        & {\lesssim}  \tau^m \Norm{u}{m}
		+ h^{\ell + 1} \Norm{u}{\ell}, \\
		 \Norm{\nabla (u-\uht)}{L^{\infty}(0, T; L^2(\Omega)^d)}
        & {\lesssim} \tau^m \|u\|_{m,\tau}+ h^{\ell} \|u\|_{\ell,h},
     \end{aligned}
 \end{equation}
 provided $h^{\ell + 1 - \frac{d}{2}} \lesssim \tau \lesssim h^{\frac{d}{{2(m-1)}}}$, $2 \le m \le q$, ${1} \le \ell \le p$, {and~$\ell + 1 - d/2 \geq d/(2(m-1))$}, where $p$ and $q$ denote the degrees of approximation in space and time, respectively. The norms $\|\cdot\|_{m}$, $\|\cdot\|_{\ell}$, $\|\cdot\|_{m,\ell}$, and $\|\cdot\|_{\ell, h}$ on the right-hand side of \eqref{final est} are norms in Bochner spaces, defined in \eqref{m ell norms} below. We refer to Theorem~\ref{thm: h tau conv nonlinear} for the details. \\
 \indent Secondly, our analysis guarantees that the constants in estimates \eqref{final est} do not depend on the sound diffusivity $\delta$. This fact paves the way for our second main theoretical contribution, which quantifies the $\delta$-asymptotic behavior of discrete solutions of dissipative problems as $\delta \searrow 0$. More precisely, Theorem~\ref{thm: delta conv nonlinear} reveals that, for a properly set up discretization, the discrete solutions preserve the asymptotic behavior of the exact ones in the vanishing dissipation limit as they converge to the discrete solution to the inviscid problem at a linear rate. 

\paragraph{Approach and challenges.} We approach the analysis of the DG--CG method for \eqref{eq:Westervelt-IBVP} by first considering a linearized discrete problem with a variable coefficient. After deriving \emph{a priori} bounds for the linearization, we combine them with a fixed-point argument set up on a ball centered at $u$ with an~$(h, \tau)$-dependent radius, in the spirit of, e.g.,~\cite{nikolic2019priori, makridakis1993finite, ortner2007discontinuous}. This approach yields the well-posedness of the discrete nonlinear problem and error bounds \eqref{final est} simultaneously.

However, the reasoning used in the {robustness} analysis of the semidiscrete-in-space finite element formulation of the Westervelt equation (see, for example,~\cite{Nikolic:2023}) cannot be applied to analyze the fully discrete scheme in this case. The primary issue is that standard testing techniques would provide bounds for the energy norm of the discrete solution to the linearized problem only at discrete times~$\{\tn\}_{n = 1}^N$ (see also Proposition~\ref{prop:weak-continuous-dependence} below), which is not enough to bound the discrete energy at all times. Thus, a big challenge in the  \emph{a priori} analysis here lies in developing the right testing strategy for the linearized problem that leads to the bounds in the form of \eqref{final est} and that is furthermore suitable for later closing the fixed-point argument.

The numerical analysis of the Westervelt equation must guarantee that the term $1+k \uht$ does not degenerate so that the discrete problem is still a valid wave model. This requirement translates to needing a uniform small bound on $\|k \uht\|_{L^\infty(0,T; \Linf)}$.
Another special feature of the Westervelt equation is that the second time derivative is involved in the nonlinearity. To prove the uniqueness of solutions to the discrete problem, we thus also need a suitable uniform bound on the discrete second time derivative, which is a nontrivial task in a time-discrete setting. 
The particular nonlinearities present in the model thus have significant implications for designing the testing strategy in the stability and error analysis, and for the assumptions made on the discretization parameters and polynomial degrees.

\paragraph{Notation.}
We use standard notation for~$L^p$, Sobolev, and Bochner spaces. For instance, given an open, bounded domain~$\calD \subset \IR^d$ $(d \in \{1, 2, 3\})$ with Lipschitz boundary~$\partial \calD$, $p \in [1, \infty]$, and~$m \in \IR^+$, we denote the corresponding Sobolev space by~$W_p^m(\calD)$,  and its seminorm and norm, respectively, by~$|\cdot|_{W_p^m(\calD)}$ and~$\Norm{\cdot}{W_p^m(\calD)}$.
In particular, for~$p = 2$, we use the notation~$H^s(\calD) := W_2^s(\calD)$, endowed with the seminorm~$|\cdot|_{H^s(\calD)}$ and the norm~$\Norm{\cdot}{H^s(\calD)}$. For~$s = 0$, $H^0(\calD) := L^2(\calD)$ is the space of Lebesgue square integrable functions over~$\calD$ with inner product~$(\cdot, \cdot)_{\calD}$, and~$H_0^1(\calD)$ is the closure of~$C_0^{\infty}(\calD)$ in the~$H^1(\calD)$ norm.
Given a time interval~$(a, b)$, 
and a Banach space~$(X, \Norm{\cdot}{X})$,
the corresponding Bochner space is denoted by~$W_p^s(a, b; X)$.

Given~$q \in \IN$, we denote by~$\Pp{q}{\calD}$ the space of polynomials defined on~$\calD$ of degree at most~$q$.

\paragraph{Structure of the remaining of the paper.} In Section~\ref{sec:fully-discrete}, we provide a precise description of the DG--CG method for the Westervelt equation. 
Section~\ref{sec:stab-linearized-problem} contains the stability analysis of a linear problem with a variable coefficient, which is complemented in Section~\ref{sect:a-priori-linearized} by the \emph{a priori} error analysis of
{that} problem in the energy norm. The results for the linearized problem are then combined with Banach's fixed-point theorem to arrive at the well-posedness and error bounds for the Westervelt equation in Section~\ref{Sec: error analysis nonlinear}.
In Section~\ref{sec:vanishing delta analysis}, we investigate the vanishing $\delta$ dynamics of the discrete problem and establish the convergence rates with respect to this parameter.
Finally, our theoretical findings are illustrated in Section~\ref{sec:numerical-results} with several numerical experiments, and we include some concluding remarks in Section~\ref{sec:conclusions}.

\section{Description of the method \label{sec:fully-discrete}}
Let~$\{\Th\}_{h > 0}$ be a family of shape-regular conforming simplicial meshes for the spatial domain~$\Omega$. {Let also~$\{\Tt\}_{\tau}$} be a {family of} (not necessarily uniform) partition{s} of the time interval~$(0, T)$, {where~$\tau$ denotes the maximum time step and each~$\Tt$ is} of the form~$0 =: t_0 < t_1 < \ldots < t_N := T$.

For each element~$K \in \Th$, we denote the diameter of~$K$ by~$\hK$. Moreover, for~$n = 1, \ldots, N$, we define the time interval~$I_n := (\tnmo, \tn) \in \Tt$, the time step~$\tau_n := \tn - \tnmo$, the partial cylinder~$\Qn := \Omega \times \In$, and the surface~$\Sn := \Omega \times \{\tn\}$. 

In addition, for~$n = 1, \ldots, N - 1$ and any piecewise smooth function~$v$, we define the time jump\footnote{The time jumps are sometimes defined as~$\jump{v}_n(\bx) := v(\bx, \tn^+) - v(\bx, \tn^-)$. Instead, in the context of space--time methods, the definition in~\eqref{eq:time-jumps} corresponds to setting the normal vector on the \emph{so-called} ``space-like" facets pointing to the future.}
\begin{equation}
\label{eq:time-jumps}
\jump{v}_n(\bx) := v(\bx, \tn^-) - v(\bx, \tn^+) \qquad \forall \bx \in \Omega,
\end{equation}
where
\begin{equation*}
v(\bx, \tn^-) := \lim_{\varepsilon \to 0^+} v(\bx, \tn - \varepsilon) \quad \text{ and } \quad v(\bx, \tn^+) := \lim_{\varepsilon \to 0^+} v(\bx, \tn + \varepsilon).
\end{equation*}

Let~$p, q \in \IZ^+$ be some given degrees of approximation in space and time, respectively. We define the following piecewise polynomial spaces: 
\begin{subequations}
\begin{alignat*}{2}
\Vhp & := \big\{v \in H_0^1(\Omega) \, : \, v_{|_K} \in \Pp{p}{K} \text{ for all } K \in \Th\big\}, \\
\Vtq & := \big\{v \in
C^0[0, T] \, : \, v_{|_{\In}} \in \Pp{q}{\In} \text{ for } n = 1, \ldots, N\big\},\\
\Vht & := \big\{v \in 
C^0([0, T]; H_0^1(\Omega)) \, :\, v_{|_{\Qn}} \in \Pp{q}{\In; \Vhp} \text{ for all } \In \in \Tt\big\}, \\
\Wht & := \big\{w \in L^2(0, T; H_0^1(\Omega)) \, :\, w_{|_{\Qn}} \in \Pp{q-1}{\In; \Vhp} \text{ for all } \In \in \Tt\big\}.
\end{alignat*}
\end{subequations}
Moreover, we denote by~$\Pp{q}{\Tt}$ the space of piecewise polynomials of degree at most~$q$ on each~$\In \in \Tt$.

In the spirit of the DG--CG time stepping scheme in~\cite{Walkington:2014}, the proposed space--time formulation reads: find~$\uht \in \Vht$ such that
\begin{equation}
\label{eq:space-time-formulation}
\Bht(\uht, (\wht, \zh)) = \mathcal{L}(\wht, \zh) \quad \forall (\wht, \zh) \in \Wht \times \Vhp,
\end{equation}
where
\begin{subequations}
\begin{alignat*}{3}
\nonumber
\Bht(\uht, (\wht, \zh)) & := \sum_{n = 1}^N \big(\dpt((1 + k \uht) \dpt \uht), \wht\big)_{\Qn} \\
\nonumber
& \quad - \sum_{n = 1}^{N - 1} \big((1 + k \uht(\cdot, \tn)) \jump{\dpt \uht}_n, \wht(\cdot, \tn^+) \big)_{\Omega} 
+ \big((1 + k \uht) \dpt \uht, \wht \big)_{\SO} \\
& \quad  
+ c^2 \big(\nabla \uht, \nabla \wht\big)_{\QT} 
+ \delta \big(\nabla \dpt \uht, \nabla \wht \big)_{\QT} + (\nabla \uht(\cdot, 0), \nabla \zh)_{\Omega}, \\
\mathcal{L}(\wht, \zh) & := \big(f, \wht\big)_{\QT} + \big((1 + k u_0) u_1, \wht(\cdot, 0) \big)_{\Omega} + (\nabla u_0, \nabla \zh)_{\Omega}.
\end{alignat*}
\end{subequations}
Since we are interested in the vanishing dynamics, we assume that~$\delta \in [0, \bdelta]$ for some fixed~$\bdelta>0$.

\paragraph{Energy norm.} We introduce the upwind-jump functional 
\begin{equation*}
\SemiNorm{v}{\sf J}^2 := \frac12 \big(\Norm{v}{L^2(\ST)}^2 + \sum_{n = 1}^{N - 1} \Norm{\jump{v}_n}{L^2(\Omega)}^2 + \Norm{v}{L^2(\SO)}^2 \big),
\end{equation*}
and the following energy norm in the space~$\Vht$:
\begin{equation}
\label{eq:energy-norm-delta}
\begin{split}
\Tnormd{\uht}^2 & := \Norm{\dpt \uht}{L^{\infty}(0, T; L^2(\Omega))}^2 + c^2 \Norm{\nabla \uht}{L^{\infty}(0, T; L^2(\Omega)^d)}^2 + \SemiNorm{\dpt \uht}{\sf J}^2 \\
& \quad + c^2\Norm{\nabla \uht}{L^2(\ST)^d}^2 + c^2 \Norm{\nabla \uht}{L^2(\SO)^d}^2 + \delta \Norm{\nabla \dpt \uht}{L^2(\QT)^d}^2.
\end{split}
\end{equation}

\begin{remark}[Linear case]
By setting~$\delta = 0$ and~$k = 0$ in \eqref{eq:space-time-formulation}, the proposed method reduces to the DG--CG scheme in~\cite{Walkington:2014} for the linear acoustic wave equation without damping, whereas, for~$k = 0$ and~$\delta > 0$, it corresponds to a space--time discretization of a strongly damped wave equation.
\eremk
\end{remark}

\begin{remark}[Discrete initial conditions]
\label{rem:discrete-initial-condition}
In the space--time variational formulation~\eqref{eq:space-time-formulation}, the discrete initial condition for~$\uht(\cdot, 0)$ is strongly enforced as the Ritz projection of~$u_0$,
whereas the discrete initial condition for~$\dpt \uht(\cdot, 0)$ is imposed in a weak sense.
\eremk
\end{remark}

As announced, we approach the analysis of \eqref{eq:space-time-formulation} by first considering a linearized problem with a variable coefficient, where we replace $1+k\uht$ in the leading term by $1+k\alphaht$ for a suitably chosen $\alphaht$. The estimates will later be combined with Banach's fixed-point theorem to obtain the results for~\eqref{eq:space-time-formulation}, uniformly in $\delta$. Thus, $\alphaht$ can be understood in this work as a placeholder for a fixed-point iterate.
\section{Stability analysis of a linearized problem\label{sec:stab-linearized-problem}}

In this section, we analyze the stability of the following linearized space--time formulation: given a discrete approximation~$\alphaht \in \Vht$ of~$u$, find~$\uht \in \Vht$ such that
\begin{equation}
\label{eq:linearized-space-time-formulation}
\begin{split}
\Bhtt(\uht, (\wht, \zh))  = \mathcal{L}_{\delta}(\wht, \zh) \qquad \forall \wht \in \Wht, \forall \zh \in \Vhp,
\end{split}
\end{equation}
where
\begin{alignat}{3}
\nonumber
\Bhtt(\uht, (\wht, \zh)) & := \sum_{n = 1}^N \big(\dpt((1 + k \alphaht) \dpt \uht), \wht\big)_{\Qn} \\
\nonumber
& \quad - \sum_{n = 1}^{N  - 1} \big((1 + k \alphaht(\cdot, \tn)) \jump{\dpt \uht}_n, \wht(\cdot, \tn^+) \big)_{\Omega} + \big((1 + k \alphaht) \dpt \uht, \wht \big)_{\SO} \\
\label{eq:Bhtt-def}
& \quad + c^2 \big(\nabla \uht, \nabla \wht\big)_{\QT} + \delta \big(\nabla \dpt \uht, \nabla \wht \big)_{\QT} + (\nabla \uht(\cdot, 0), \nabla \zh)_{\Omega},\\[2mm]
\nonumber
\mathcal{L}_{\delta}(\wht, \zh) & := \big(f, \wht\big)_{\QT} + \big((1 + k u_0) u_1, \wht(\cdot, 0) \big)_{\Omega} \\
\label{eq:ell-delta-def}
& \quad + \delta (\nabla \dpt \mu, \nabla \wht)_{\QT} + \sum_{n = 1}^N \big(\xi, \dpt \wht \big)_{\Qn} + (\nabla u_0, \nabla \zh)_{\Omega},
\end{alignat}
with~$\mu \in H^1(0, T; H_0^1(\Omega)^d)$ being a perturbation function that will represent the interpolation error of the damping term, and~$\xi \in L^2(\QT)$
{being} a function representing the lack of orthogonality of the projection~$\Pt$ used in the~\emph{a priori} error analysis in Section~\ref{subsect:a-priori-estimates-linearized} below due to the presence of the discrete coefficient~$\alphaht$.

Henceforth, we denote by~$\Id$ the identity operator, and by~$\Pi_{q-1}^t$ the~$L^2(0, T)$ orthogonal projection onto~$\Pp{q}{\Tt}$. In Appendix~\ref{appendix:Stab}, we recall some standard results concerning stability and approximation properties of~$\Pi_{q-1}^t$, polynomial inverse estimates, and standard trace inequalities that we heavily rely on in the analysis below.

\subsection{{Summary of results in this section}}
We assume that the linearized problem does not degenerate in the following sense.
\begin{assumption}[Nondegeneracy of the discrete coefficient]
\label{asm:nondegeneracy}
Assume that the discrete coefficient~$\alphaht \in \Vht$ does not degenerate, i.e., there exist positive constants~$\la$ and~$\ua$ independent of~$h$, $\tau$, and~$\delta$ such that  
\begin{equation}
\label{eq:nondegeneracy-assumption}
0 < 1 - |k| \la \le 1 + k \alphaht(\bx, t) \le 1 + |k| \ua \qquad \forall (\bx, t) \in \QT.
\end{equation}
\end{assumption}
Later on, in the fixed-point argument, we show that the nondegeneracy assumption is satisfied for $\uht$ as well.

As mentioned, the main difficulty in reproducing the arguments used for the semidiscrete-in-space formulation of {the} linearized Westervelt equation, e.g.,~\cite{Nikolic:2023} to analyze the linearization of the fully discrete scheme~\eqref{eq:space-time-formulation} is that standard techniques provide bounds for the energy norm of the discrete solution only at the discrete times~$\{\tn\}_{n = 1}^N$ (see also Proposition~\ref{prop:weak-continuous-dependence} below), which
is not enough to bound the energy for all times. 
To address this issue, we 
first 
{extend} the ideas from~\cite{Walkington:2014} to the present setting of the linear problem with a variable coefficient in order to obtain continuous dependence on the data controlling the energy at all times. Thus, the next theorem, which we prove in Section~\ref{sec:strong-continuous-dependence} below, is the main result of this section.

\begin{theorem}[Continuous dependence on the data]
\label{thm:strong-continuous-dependence}
Let Assumption~\ref{asm:nondegeneracy} on the discrete coefficient~$\alphaht$ hold. 
There exists a positive constant~$\Ca$ independent of~$h$, $\tau$, and~$\delta$ such that, if
\begin{equation}
\label{eq:bound-alpha-W-1-infty}
|k|\Norm{\dpt \alphaht}{L^1(0, T; L^{\infty}(\Omega))} \le \Ca,
\end{equation}
then the following stability estimate holds:
\begin{alignat}{3}
\nonumber
\Tnormd{\uht}^2 & \lesssim \Norm{f}{L^1(0, T; L^2(\Omega))}^2 +  \Norm{(1 + k u_0) u_1}{L^2(\Omega)}^2 + c^2 \Norm{\nabla u_0}{L^2(\Omega)^d}^2 \\
\label{eq:strong-continuous-dependence}
& \quad + \overline{\delta} \Norm{\nabla \dpt \mu}{L^2(\QT)^d}^2 + \Norm{\overline{\tau}^{-1} \xi}{L^1(0, T; L^2(\Omega))}^2.
\end{alignat}
\end{theorem}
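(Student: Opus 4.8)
The plan is to obtain an energy identity by testing the linearized formulation~\eqref{eq:linearized-space-time-formulation} with the discrete velocity $\dpt\uht$, which yields control of the energy at the time nodes $\{t_n\}_{n=1}^N$, and then to upgrade this to control at \emph{all} times $t\in(0,T]$ by means of a localized-in-time test function in the spirit of~\cite{Walkington:2014}; the latter is exactly what captures the two $L^\infty(0,T;\cdot)$ contributions to $\Tnormd{\cdot}$.

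First I would take $\zh=\uht(\cdot,0)-\Rh u_0\in\Vhp$ in~\eqref{eq:linearized-space-time-formulation}, which by the defining property of the Ritz projection forces $\uht(\cdot,0)=\Rh u_0$ and hence $\Norm{\nabla\uht}{L^2(\SO)^d}\lesssim\Norm{\nabla u_0}{\Ltwod}$. Then, for each $n$, I would test the $\wht$-equation with $\wht=\dpt\uht\,\chi_{(0,t_n)}\in\Wht$. Splitting $\dpt((1+k\alphaht)\dpt\uht)=(1+k\alphaht)\dptt\uht+k(\dpt\alphaht)\dpt\uht$, integrating by parts in time on each subinterval, and using the elementary identity
\[
\tfrac12 a_j\big(\Norm{\dpt\uht(\cdot,t_j^-)}{\Ltwo}^2-\Norm{\dpt\uht(\cdot,t_j^+)}{\Ltwo}^2\big)-a_j\big(\jump{\dpt\uht}_j,\dpt\uht(\cdot,t_j^+)\big)_{\Omega}=\tfrac12 a_j\Norm{\jump{\dpt\uht}_j}{\Ltwo}^2,
\]
with $a_j:=1+k\alphaht(\cdot,t_j)\ge 1-|k|\la>0$ by Assumption~\ref{asm:nondegeneracy}, the interval-boundary and time-jump terms combine into the nonnegative upwind dissipation $\tfrac12\sum_{j<n}a_j\Norm{\jump{\dpt\uht}_j}{\Ltwo}^2$. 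Together with the $\SO$-term and the spatial contributions $c^2(\nabla\uht,\nabla\dpt\uht)_{\Omega\times(0,t_n)}=\tfrac{c^2}2\big(\Norm{\nabla\uht(\cdot,t_n)}{\Ltwod}^2-\Norm{\nabla\uht(\cdot,0)}{\Ltwod}^2\big)$ and $\delta\Norm{\nabla\dpt\uht}{L^2(\Omega\times(0,t_n))^d}^2$, this produces a left-hand side controlling $\Norm{\dpt\uht(\cdot,t_n)}{\Ltwo}^2+c^2\Norm{\nabla\uht(\cdot,t_n)}{\Ltwod}^2+\delta\Norm{\nabla\dpt\uht}{L^2(\Omega\times(0,t_n))^d}^2$ together with all the jump and endpoint terms of $\Tnormd{\cdot}$. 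On the right-hand side the only ``dangerous'' term is $-\tfrac k2\big((\dpt\alphaht)\dpt\uht,\dpt\uht\big)_{\Omega\times(0,t_n)}$, bounded by $\tfrac{|k|}2\Norm{\dpt\alphaht}{L^1(0,T;\Linf)}\Norm{\dpt\uht}{L^\infty(0,T;\Ltwo)}^2$; after taking the maximum over $n$ this is absorbed into the left-hand side precisely when the smallness condition~\eqref{eq:bound-alpha-W-1-infty} holds, so that -- unlike in the semidiscrete analysis -- no Gr\"onwall argument is needed here.

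The remaining right-hand side terms I would treat as follows: the source term by $(f,\wht)_{\QT}\le\Norm{f}{L^1(0,T;\Ltwo)}\Norm{\dpt\uht}{L^\infty(0,T;\Ltwo)}$ and the initial term by $\Norm{(1+ku_0)u_1}{\Ltwo}\Norm{\dpt\uht(\cdot,0)}{\Ltwo}$, both absorbed with Young's inequality; the damping perturbation $\delta(\nabla\dpt\mu,\nabla\wht)_{\QT}$ absorbed into $\delta\Norm{\nabla\dpt\uht}{L^2(\QT)^d}^2$ after using $\delta\le\bdelta$; and, since $\dpt\wht=\dptt\uht$ is a piecewise polynomial of time-degree $q-2$, the term $\sum_j(\xi,\dpt\wht)_{Q_j}$ via the polynomial inverse estimate in time recalled in Appendix~\ref{appendix:Stab}, namely $\Norm{\dptt\uht}{L^2(I_j;\Ltwo)}\lesssim\tau_j^{-1}\Norm{\dpt\uht}{L^2(I_j;\Ltwo)}$, which is precisely what generates the weight $\overline{\tau}^{-1}$ and the term $\Norm{\overline{\tau}^{-1}\xi}{L^1(0,T;\Ltwo)}\Norm{\dpt\uht}{L^\infty(0,T;\Ltwo)}$. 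Collecting all these estimates gives the bound at the time nodes stated in Proposition~\ref{prop:weak-continuous-dependence}.

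Finally, to pass from the time nodes to all times, I would fix $t^\star\in(0,T]$, say $t^\star\in I_{n^\star}$, and test~\eqref{eq:linearized-space-time-formulation} with $\wht$ equal to $\dpt\uht$ on $(0,t_{n^\star-1})$ and, on $I_{n^\star}$, equal to a suitable polynomial in $\Pp{q-1}{I_{n^\star};\Vhp}$ built from the elementwise $L^2$-in-time projection of $\chi_{(t_{n^\star-1},t^\star)}\dpt\uht$, so that $\wht\in\Wht$. Because $\dpt\uht$ and $\dptt\uht$ have time-degrees $q-1$ and $q-2$, the first groups of terms of $\Bhtt$ reproduce $\int_0^{t^\star}\Norm{\dpt\uht}{\Ltwo}^2\,\dt$ and $\tfrac12\Norm{\dpt\uht(\cdot,t^\star)}{\Ltwo}^2$ \emph{exactly} (the extra error coming from the time-variable coefficient on $I_{n^\star}$ being controlled by the same smallness~\eqref{eq:bound-alpha-W-1-infty}), while the jump term at $t_{n^\star-1}$ is handled by the node estimate above. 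The genuinely delicate point is the spatial term on $I_{n^\star}$: since $\nabla\uht$ has time-degree $q$, $(\nabla\uht,\nabla\wht)_{Q_{n^\star}}$ only reproduces $\tfrac{c^2}2\big(\Norm{\nabla\uht(\cdot,t^\star)}{\Ltwod}^2-\Norm{\nabla\uht(\cdot,t_{n^\star-1})}{\Ltwod}^2\big)$ up to a time-projection-error term; bounding this term \emph{uniformly in $\delta$} -- hence without recourse to the $\delta$-weighted dissipation, which degenerates as $\delta\searrow 0$ -- requires the sharp polynomial inverse and interpolation estimates in time of Appendix~\ref{appendix:Stab} and a careful choice of the localized test function, and is the main obstacle of the proof. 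Once this error term is controlled, combining the resulting inequality with the node estimate, taking the supremum over $t^\star\in(0,T]$, and absorbing the remaining $\dpt\uht$- and $\nabla\uht$-contributions on the right-hand side (via Young's inequality, and via~\eqref{eq:bound-alpha-W-1-infty} for the variable-coefficient term) yields~\eqref{eq:strong-continuous-dependence}.
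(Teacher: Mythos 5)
Your overall architecture matches the paper's: a node-level energy bound obtained by testing with $\dpt\uht\,\chi_{(0,\tn)}$ (this is exactly Proposition~\ref{prop:weak-continuous-dependence}, and your treatment of the jump, source, damping-perturbation, and $\xi$ terms there is essentially the paper's), followed by a localized-in-time test to upgrade to control at all times. However, the second step --- which is the entire technical content of the proof --- is left unresolved. You correctly isolate the obstacle (the spatial term $c^2(\nabla\uht,\nabla\wht)_{\Qn}$ only reproduces the endpoint energies up to a time-projection error, and this error cannot be paid for with the $\delta$-weighted dissipation), but you then write ``once this error term is controlled\ldots'' without supplying the mechanism. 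Moreover, the test function you propose --- the $L^2$-in-time projection of $\chi_{(\tnmo,t^\star)}\dpt\uht$ for an interior time $t^\star$ --- would not make that term controllable: writing $(\Id-\Pi_{q-1}^t)\nabla\uht=\nabla\sigma_n(\bx)L_q(t)$, the cutoff at $t^\star$ produces a cross term $-c^2\sum_j(\nabla\sigma_n,c_j)_{\Omega}\int_{\tnmo}^{t^\star}L_q\phi_j\,\dt$ against the degree-$(q-2)$ part of $\nabla\dpt\uht$, which is not sign-definite and, after inverse estimates, is only bounded by an $O(1)$ multiple of $c^2\Norm{\nabla\uht}{L^{\infty}(\In;L^2(\Omega)^d)}^2$ --- too large to absorb into the single endpoint term $\tfrac{c^2}{2}\Norm{\nabla\uht(\cdot,t^\star)}{L^2(\Omega)^d}^2$ that your left-hand side provides.

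The paper's resolution is different in both respects and is the idea your proposal is missing. One tests over the \emph{whole} slab $\Qn$ (no interior cutoff) with $\wht=\Pi_{q-1}^t(\varphi_n\dpt\uht)$, where $\varphi_n(t)=\theta-\lambda_n(t-\tnmo)$ is the \emph{linear} weight of~\eqref{eq:polynomial-weight} with slope $\lambda_n=\zeta_q/\tau_n$, $\zeta_q=1/(4(2q+1))$. This buys two things simultaneously. First, because $\varphi_n$ is affine, the dangerous spatial projection-error term collapses, via the Legendre identity $\int_{\In}tL_q(t)L_q'(t)\,\dt=\tau_nq/(2q+1)$, to the \emph{exactly nonnegative} quantity $\frac{c^2q\zeta_q}{2q+1}\Norm{\nabla\sigma_n}{L^2(\Omega)^d}^2$ (see~\eqref{eq:term-M2}), so it is simply discarded rather than estimated. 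Second, the $L^{\infty}$-in-time control does not come from evaluating at an interior $t^\star$ at all: the derivative $\varphi_n'=-\lambda_n$ generates the positive bulk terms $\tfrac{\lambda_n}{2}\Norm{(1+k\alphaht)^{1/2}\dpt\uht}{L^2(\Qn)}^2$ and $\tfrac{c^2\lambda_n}{2}\Norm{\nabla\uht}{L^2(\Qn)^d}^2$ on the left-hand side, and Lemma~\ref{lemma:Linfty-L2} converts these into $\Norm{\dpt\uht}{L^{\infty}(\In;L^2(\Omega))}^2$ and $\Norm{\nabla\uht}{L^{\infty}(\In;L^2(\Omega)^d)}^2$ up to the fixed constant $(1+\Cinv)^{-2}$, after which $\theta$ is taken large to absorb everything else. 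Without this weight-function construction (or an equivalent device), the passage from the node bound of Proposition~\ref{prop:weak-continuous-dependence} to the full norm $\Tnormd{\uht}$, uniformly in $\delta$, does not go through, so the proposal as written has a genuine gap at its central step.
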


\begin{remark}[Existence and uniqueness of a discrete solution to the linearized problem]
\label{rem:well-posedness-linearized-problem}
From~\eqref{eq:strong-continuous-dependence}, we can deduce that the unique discrete solution to the linearized space--time formulation~\eqref{eq:linearized-space-time-formulation} with zero data~($f = 0$, $u_0 = 0$, $u_1 = 0$, $\mu = 0$, and~$\xi = 0$) is~$\uht = 0$. 
Since~\eqref{eq:linearized-space-time-formulation} corresponds to a square system of linear equations, this implies the existence and uniqueness of a discrete solution to~\eqref{eq:linearized-space-time-formulation}.
\eremk
\end{remark}

\subsection{Auxiliary weight function}
Following~\cite{Walkington:2014} (see also~\cite{Dong_Mascotto_Wang:2024} for a~$q$-explicit analysis), we introduce a set of auxiliary linear functions that are used in \purple{the proof of} Theorem~\ref{thm:strong-continuous-dependence} to derive stability estimates in the energy norm~\eqref{eq:energy-norm-delta}. Let~$\theta$ be a strictly positive parameter that will be chosen later. For~$n = 1, \ldots, N$, we define the following linear function: 
\begin{equation}
\label{eq:polynomial-weight}
\varphi_n(t) := \theta - \lambda_n(t - \tnmo) \ \text{ with } \lambda_n := \frac{\zeta_q}{\tau_n}\ \text{ and }\ \zeta_q := \frac{1}{4(2 q + 1)} \quad \text{ for all }t \in (\tnmo, \tn),
\end{equation}
The constant~$\zeta_q$ depends only on the degree of approximation~$q$. 
From Lemma~\ref{lemma:Linfty-L2}, we deduce that
\begin{equation*}
\begin{split}
\frac{\zeta_q}{(1 + \Cinv)^2}\Norm{\wt}{L^{\infty}(\In; L^2(\Omega))}^2 & \le \lambda_n \Norm{ \wt}{L^2(\Qn)}^2 \quad \forall w_{\tau} \in \Pp{q}{\In; L^2(\Omega)}, \end{split}
\end{equation*}
which will be exploited in Section~\ref{sec:strong-continuous-dependence} to obtain~$L^\infty$-bounds in time on the approximate solution.

We assume that~$\theta > \zeta_q$. Then, the weight function~$\varphi_n$ satisfies the uniform bounds
\begin{alignat}{2}
\label{eq:uniform-bound-phi_n-1}
0 < \theta - \zeta_q
\le \varphi_n(t) \le \theta & \qquad \text{ for all } t\in [\tnmo, \tn], 
\end{alignat}
as well as
\begin{alignat*}{2}
\varphi_n'(t) = -\lambda_n = - \frac{1}{4 \tau_n(2q + 1)} & \qquad \text{ for all } t \in [\tnmo, \tn].
\end{alignat*}
Furthermore, we have the following approximation result, which relies on the properties of Legendre polynomials.
\begin{lemma} \label{lemma: weight function}
For~$n = 1, \ldots, N$ and~$K \in \Th$, the following bounds hold:
\begin{alignat}{3}
\label{eq:estimate-phi_n-1}
\Norm{(\Id - \Pi_{q - 1}^t)(\varphi_n \wht)}{L^2(\Kn)} & \le \zeta_q \Norm{\wht}{L^2(\Kn)} & \quad \forall \wht \in \Wht, \\
\nonumber
\Norm{(\Id - \Pi_{q - 1}^t)(\varphi_n \wht)(\cdot, \tnmo^+)}{L^2(\Omega)} & \le \frac{1}{4\sqrt{2 q + 1} \sqrt{\tau_n}}\Norm{\wht}{L^2(\Qn)} \\
\label{eq:estimate-phi_n-2}
& = \frac{\sqrt{\lambda_n}}{2} \Norm{\wht}{L^2(\Qn)} & \quad \forall \wht \in \Wht.
\end{alignat}
\end{lemma}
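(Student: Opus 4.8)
The plan is to prove Lemma~\ref{lemma: weight function} by exploiting the orthogonality properties of Legendre polynomials on each time interval~$\In$, reducing the estimates to one-dimensional statements in~$t$ with values in~$L^2(K)$ (resp.\ $L^2(\Omega)$).

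\textbf{Step 1: Reduce to a scalar problem and rescale to a reference interval.} Fix~$n$ and~$K$. On~$\In = (\tnmo, \tn)$, write any~$\wht \in \Wht$ restricted to~$\Kn$ as~$\wht(\bx, t) = \sum_{j = 0}^{q - 1} a_j(\bx) L_j^n(t)$, where~$\{L_j^n\}_{j \ge 0}$ are the~$L^2(\In)$-orthogonal Legendre polynomials rescaled to~$\In$, normalized so that~$\Norm{L_j^n}{L^2(\In)}^2 = \tau_n/(2j+1)$ and~$L_j^n(\tnmo) = (-1)^j$ (or, after an affine change of variables~$s \in (-1,1) \mapsto t \in \In$, the classical Legendre polynomials~$P_j$ with~$\Norm{P_j}{L^2(-1,1)}^2 = 2/(2j+1)$ and~$P_j(-1) = (-1)^j$). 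The weight~$\varphi_n$ is linear, so~$\varphi_n \wht$ has degree at most~$q$ in~$t$, and~$(\Id - \Piqnegone)(\varphi_n \wht)$ picks out precisely the Legendre-mode-$q$ component of~$\varphi_n \wht$ on each~$\In$. Since the coefficients~$a_j(\bx)$ only enter through their~$L^2(K)$ norms, it suffices to track how multiplication by the linear function~$\varphi_n$ mixes Legendre modes, i.e.\ to compute the coefficient of~$L_q^n$ in~$\varphi_n L_j^n$ for~$j = 0, \dots, q-1$; then sum the contributions using orthogonality in~$L^2(K)$ for the spatial coefficients and the triangle/Cauchy--Schwarz inequality.

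\textbf{Step 2: First estimate.} Using the three-term recurrence~$(2j+1)\, s\, P_j(s) = (j+1) P_{j+1}(s) + j P_{j-1}(s)$, the only term of~$\varphi_n L_j^n$ that contributes to the~$L_q^n$-mode comes from the~$j = q-1$ summand (multiplication by the linear~$\varphi_n$ raises the degree by exactly one), with an explicit coefficient. Carrying the computation through, one finds
\[
\Norm{(\Id - \Piqnegone)(\varphi_n \wht)}{L^2(\Kn)} = \frac{q}{2q-1}\,\frac{\tau_n}{2}\,\lambda_n \,\frac{\Norm{L_{q}^n}{L^2(\In)}}{\Norm{L_{q-1}^n}{L^2(\In)}}\,\Norm{a_{q-1}}{L^2(K)} \le \frac{\lambda_n \tau_n}{4}\,\Norm{\wht}{L^2(\Kn)},
\]
and since~$\lambda_n \tau_n = \zeta_q$ by~\eqref{eq:polynomial-weight}, the right-hand side is~$\le \zeta_q \Norm{\wht}{L^2(\Kn)}$, giving~\eqref{eq:estimate-phi_n-1}. (One should double-check the Legendre-norm ratio~$\Norm{L_q^n}{L^2(\In)}/\Norm{L_{q-1}^n}{L^2(\In)} = \sqrt{(2q-1)/(2q+1)}$ and verify the algebra yields a constant~$\le 1$ against~$\zeta_q$; a slightly cruder bound on the recurrence coefficient still suffices.)

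\textbf{Step 3: Second estimate (endpoint evaluation).} For~\eqref{eq:estimate-phi_n-2}, evaluate the~$L_q^n$-component of~$\varphi_n \wht$ at~$t = \tnmo^+$. Since that component equals~$c_q(\bx) L_q^n(t)$ with~$c_q(\bx)$ the coefficient computed in Step~2 and~$L_q^n(\tnmo) = (-1)^q$, we get
\[
\Norm{(\Id - \Piqnegone)(\varphi_n \wht)(\cdot, \tnmo^+)}{L^2(\Omega)} = \Norm{c_q}{L^2(\Omega)} = \frac{c}{\Norm{L_q^n}{L^2(\In)}}\,\Norm{\text{(mode-$q$ part)}}{L^2(\Qn)},
\]
and combining with the bound on the mode-$q$ part from Step~2 and~$\Norm{L_q^n}{L^2(\In)}^2 = \tau_n/(2q+1)$ produces the factor~$1/(4\sqrt{2q+1}\sqrt{\tau_n})$; the rewriting as~$\sqrt{\lambda_n}/2$ is then immediate from~$\lambda_n = \zeta_q/\tau_n$ and~$\zeta_q = 1/(4(2q+1))$, since~$\sqrt{\lambda_n}/2 = \tfrac12 \sqrt{\zeta_q/\tau_n} = 1/(4\sqrt{2q+1}\sqrt{\tau_n})$.

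\textbf{Main obstacle.} The only real work is Step~2: correctly identifying which Legendre mode multiplication by~$\varphi_n$ feeds into~$L_q^n$ and nailing the numerical constant so that it comes out~$\le \zeta_q$ (and not merely~$O(\zeta_q)$). The cleanest route is to work on the reference interval~$(-1,1)$, write~$\varphi_n(t) = \theta - \tfrac{\zeta_q}{2}(1+s)$ in the~$s$ variable, note that the~$\theta$ and constant parts are annihilated by~$(\Id - \Piqnegone)$ against modes~$\le q-1$, so only the~$-\tfrac{\zeta_q}{2} s\, P_{q-1}(s)$ piece survives, and apply the recurrence once. I expect this to give exactly the stated constants; if a stray factor appears, it can be absorbed since the subsequent stability analysis only uses that these constants are controlled by~$\zeta_q$ and~$\sqrt{\lambda_n}$.
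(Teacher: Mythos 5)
Your proposal is correct and rests on the same key mechanism as the paper's proof: $(\Id - \Pi_{q-1}^t)$ applied to the degree-$q$ polynomial $\varphi_n\wht$ extracts its Legendre mode $L_q$, the constant part of $\varphi_n$ is annihilated by orthogonality, and only the linear part acting on the top mode of $\wht$ survives. The paper then bounds the resulting quantity crudely via H\"older and Cauchy--Schwarz, using $\Norm{t-\tnmo}{L^\infty(\In)} = \tau_n$ and $\Norm{L_q}{L^2(\In)}^2 = \tau_n/(2q+1)$, which yields exactly the constants $\zeta_q$ and $\sqrt{\lambda_n}/2$; your route instead computes the exact mode-mixing coefficient via the three-term recurrence, which gives a sharper constant of roughly $\zeta_q\, q/(2\sqrt{4q^2-1})$ in~\eqref{eq:estimate-phi_n-1}. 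One small caveat: your intermediate claim that this is $\le \lambda_n\tau_n/4$ is marginally false, since $q/(2\sqrt{4q^2-1}) > 1/4$ for every $q$ (it lies in $(1/4,\, 1/(2\sqrt{3})]$), and your displayed ``equality'' mixes the normalization of $\Norm{a_{q-1}}{L^2(K)}$ with the ratio $\Norm{L_q^n}{L^2(\In)}/\Norm{L_{q-1}^n}{L^2(\In)}$ inconsistently; neither issue affects the conclusion, as the resulting constant is still comfortably below $\zeta_q$ and the endpoint estimate~\eqref{eq:estimate-phi_n-2} follows with room to spare.
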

\begin{proof}
Let~$L_q(t)$ denote the Legendre polynomial of degree~$q$ on the interval~$\In$ (the explicit dependence on the interval~$\In$ is omitted). 
Since~$\varphi_n \wht \in \mathcal{W}_{h, \tau}^{p, q}$, the following identity holds for all~$(\bx, t) \in \Qn$:
\begin{alignat}{3}
\nonumber
(\Id - \Pi_{q - 1}^t)(\varphi_n \wht)(\bx, t) 
& = \frac{(2q + 1) L_q(t)}{\tau_n} \int_{\In} L_q(t) \varphi_n(t) \wht(\bx, t) \dt \\
\label{eq:Id-Pi-explicit}
& = -\frac{(2q + 1) \lambda_n L_q(t)}{\tau_n} \int_{\In} L_q(t)  (t - \tnmo) \wht(\bx, t) \dt,
\end{alignat}
where we have used the fact that~$\wht(\bx, \cdot) \in \Pp{q-1}{\In}$ for all~$\bx \in \Omega$.
Therefore, using the H\"older and the Cauchy--Schwarz inequalities, we get
\begin{alignat*}{2}
\Norm{(\Id - \Pi_{q - 1}^t)(\varphi_n \wht)}{L^2(\Kn)} 
& \le \frac{(2q + 1) \lambda_n}{\tau_n} \Big(\Norm{L_q}{L^2(\In)}^2 \int_K \Big(\int_{\In} L_q(t) (t - \tnmo) \wht(\bx, t) \dt \Big)^2 \dx \Big)^{\frac12} \\
& \le \frac{(2q + 1) \lambda_n}{\tau_n} \Norm{t - \tnmo}{L^{\infty}(\In)} \Norm{L_q}{L^2(\In)} \Big( \int_K \Norm{L_q}{L^2(\In)}^2 \Norm{\wht(\bx, \cdot)}{L^2(\In)}^2 \dx\Big)^{\frac12}\\
& \le \frac{(2q + 1) \lambda_n}{\tau_n} \Norm{t - \tnmo}{L^{\infty}(\In)} \Norm{L_q}{L^2(\In)}^2 \Norm{\wht}{L^2(\Kn)} \\
& \le
\lambda_n \tau_n \Norm{\wht}{L^2(\Kn)} \\
& = \zeta_q \Norm{\wht}{L^2(\Kn)}.
\end{alignat*}

Similarly, using the fact that~$L_q(\tnmo) = (-1)^q$, we have
\begin{alignat*}{3}
\Norm{(\Id - \Pi_{q-1}^t)(\varphi_n \wht)(\cdot, \tnmo^+)}{L^2(\Omega)} & \le \frac{(2q + 1) \lambda_n }{\tau_n} \Big(\int_{\Omega} \Big(\int_{\In} L_q(t) (t - \tnmo) \wht(\bx, t) \dt \Big)^2 \dx \Big)^{\frac12} \\
& \le \frac{(2q + 1) \lambda_n}{\tau_n} \Norm{t - \tnmo}{L^{\infty}(\In)} \Norm{L_q}{L^2(\In)} \Norm{\wht}{L^2(\Qn)} \\
& \le \frac{1}{4 \sqrt{2q + 1} \sqrt{\tau_n}} \Norm{\wht}{L^2(\Qn)} \\
& = \frac{\sqrt{\lambda_n}}{2} \Norm{\wht}{L^2(\Qn)},
\end{alignat*}
which completes the proof.
\end{proof}
In what follows, 
the projection operator~$\Pi_{q-1}^t$
is to be understood as applied pointwise in space. The constant~$\CS$ in Lemma~\ref{lemma:stability-L2-proj-Lp} depends on the index~$s$. In the analysis below, whenever Lemma~\ref{lemma:stability-L2-proj-Lp} is used for different values of~$s$, the constant~$\CS$ is to be understood as the maximum of such constants. Similar conventions are made for the constants~$\Cp$ in Lemma~\ref{lemma:polynomial-approx-time} and~$\Cinv$ in~\eqref{eq:inverse-estimate-time}.

\subsection{Weak partial bound on the discrete solution}
In this section, we prove a partial bound on the discrete solution~$\uht$ of the linearized space--time formulation~\eqref{eq:linearized-space-time-formulation} that will be used to build the ``stronger" result in Section~\ref{sec:strong-continuous-dependence} below. In the first step, we show the following lower bound.
\begin{proposition}
Let Assumption~\ref{asm:nondegeneracy} on the discrete coefficient~$\alphaht$ hold. For~$n = 1, \ldots, N$, the following bound holds:
\begin{alignat}{3}
\nonumber
\Bhtt(\uht, (\wht^{\star, n}, c^2 \uht(\cdot, 0))) & \geq \frac{(1 - |k| \la)}{2} \Big(\Norm{\dpt \uht(\cdot, \tn^-)}{L^2(\Omega)}^2 + \sum_{m = 1}^{n - 1} \Norm{\jump{\dpt \uht}_{m}}{L^2(\Omega)}^2 + \Norm{\dpt \uht}{L^2(\SO)}^2 \Big) \\
\nonumber 
& \quad + \frac{c^2}{2} \big(\Norm{\nabla \uht}{L^2(\Sn)^d}^2 + \Norm{\nabla \uht}{L^2(\SO)^d}^2 \big) + \delta \Norm{\nabla \dpt \uht}{L^2(0, \tn; L^2(\Omega)^d)}^2 \\
\label{eq:coercivity-Bhtt-whstar}
& \quad + \frac{k}{2} \int_{0}^{\tn} \int_{\Omega} \dpt \alphaht (\dpt \uht)^2 \dx \dt,
\end{alignat}
where
\begin{equation}
\label{def:wh-star-n}
\wht^{\star, n}{}_{|_{Q_m}} :=
\begin{cases}
\dpt \uht{}_{|_{Q_m}} & \text{if } m \le n, \\
0 & \text{if } m > n.
\end{cases}
\end{equation}
\end{proposition}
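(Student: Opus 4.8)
The plan is to test the linearized bilinear form $\Bhtt$ with the special pair $(\wht^{\star,n}, c^2\uht(\cdot,0))$ and show that the left-hand side collapses to a sum of manifestly nonnegative (energy-type) contributions plus the indefinite term involving $\dpt\alphaht$. Note first that $\wht^{\star,n}$, being the restriction of $\dpt\uht$ on the cylinders $Q_m$ with $m\le n$ and zero afterwards, indeed lies in $\Wht$: on each $I_m$, $\dpt\uht$ is a polynomial of degree $q-1$ in time with values in $\Vhp$, so $\wht^{\star,n}\in\Wht$ as required, even though it is generally discontinuous at $t=\tn$. With this choice, all space--time integrals in the definition \eqref{eq:Bhtt-def} of $\Bhtt$ that run over $\Qm$ with $m>n$ vanish, and everything reduces to integrals over $(0,\tn)$.

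The core of the argument is to identify each surviving term. First I would treat the leading term $\sum_{m=1}^n(\dpt((1+k\alphaht)\dpt\uht),\dpt\uht)_{\Qm}$ together with the interface-jump sum $-\sum_{m=1}^{n-1}((1+k\alphaht(\cdot,\tm))\jump{\dpt\uht}_m,\dpt\uht(\cdot,\tm^+))_{\Omega}$ and the boundary term $((1+k\alphaht)\dpt\uht,\dpt\uht)_{\SO}$. Writing $(1+k\alphaht)\dpt\uht = w$ momentarily and using the product rule $\dpt w\cdot\dpt\uht = \tfrac12\dpt((1+k\alphaht)(\dpt\uht)^2) + \tfrac12 k\,\dpt\alphaht\,(\dpt\uht)^2$ on each $\Qm$, integration in time over $I_m$ yields the telescoping difference $\tfrac12(1+k\alphaht(\cdot,\tm^-))\|\dpt\uht(\cdot,\tm^-)\|^2 - \tfrac12(1+k\alphaht(\cdot,\tmmo^+))\|\dpt\uht(\cdot,\tmmo^+)\|^2$, plus $\tfrac{k}{2}\int_{\Qm}\dpt\alphaht(\dpt\uht)^2$. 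Summing over $m=1,\dots,n$, using continuity of $\alphaht$ in time (so $\alphaht(\cdot,\tm^-)=\alphaht(\cdot,\tm^+)=\alphaht(\cdot,\tm)$) to combine with the interface terms, and recognizing $\jump{\dpt\uht}_m = \dpt\uht(\cdot,\tm^-)-\dpt\uht(\cdot,\tm^+)$, one arranges the telescoped quantities and the interface jumps via the algebraic identity $a^2 - 2b(a-c)$-type regrouping into $\tfrac12(1+k\alphaht(\cdot,\tn))\|\dpt\uht(\cdot,\tn^-)\|^2 + \tfrac12\sum_{m=1}^{n-1}(1+k\alphaht(\cdot,\tm))\|\jump{\dpt\uht}_m\|^2 + \tfrac12(1+k\alphaht(\cdot,0))\|\dpt\uht(\cdot,0)\|^2$ plus the boundary term from $\SO$; bounding each factor $1+k\alphaht$ from below by $1-|k|\la$ via Assumption~\ref{asm:nondegeneracy} gives the first and second lines of \eqref{eq:coercivity-Bhtt-whstar}, while the $\dpt\alphaht$ contributions assemble into the last line. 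Second, the term $c^2(\nabla\uht,\nabla\wht^{\star,n})_{\QT} = c^2\sum_{m=1}^n(\nabla\uht,\nabla\dpt\uht)_{\Qm} = \tfrac{c^2}{2}\int_0^{\tn}\dpt\|\nabla\uht\|^2 = \tfrac{c^2}{2}(\|\nabla\uht(\cdot,\tn)\|^2 - \|\nabla\uht(\cdot,0)\|^2)$, using that $\uht$ (hence $\nabla\uht$) is continuous in time across interfaces. Third, the damping term $\delta(\nabla\dpt\uht,\nabla\wht^{\star,n})_{\QT} = \delta\|\nabla\dpt\uht\|^2_{L^2(0,\tn;L^2(\Omega)^d)}\ge 0$. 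Finally, the term $(\nabla\uht(\cdot,0),\nabla\zh)_{\Omega}$ with $\zh = c^2\uht(\cdot,0)$ contributes exactly $c^2\|\nabla\uht(\cdot,0)\|^2$, which combines with the $-\tfrac{c^2}{2}\|\nabla\uht(\cdot,0)\|^2$ from the gradient term to give $+\tfrac{c^2}{2}\|\nabla\uht(\cdot,0)\|^2$, matching the stated right-hand side.

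Collecting all pieces produces exactly \eqref{eq:coercivity-Bhtt-whstar}: the kinetic line with constant $\tfrac{1-|k|\la}{2}$, the potential line $\tfrac{c^2}{2}(\|\nabla\uht\|_{L^2(\Sn)^d}^2+\|\nabla\uht\|_{L^2(\SO)^d}^2)$, the nonnegative damping contribution, and the indefinite $\tfrac{k}{2}\int_0^{\tn}\int_\Omega\dpt\alphaht(\dpt\uht)^2$. I expect the main obstacle to be the bookkeeping in the first step: carefully combining the per-interval telescoping boundary values with the interface-jump sum so that the continuity of $\alphaht$ can be used to factor out a common coefficient $1+k\alphaht(\cdot,\tm)$ in front of $\|\jump{\dpt\uht}_m\|^2$, and correctly tracking the sign and the factor $\tfrac12$ on the jump terms — this is the classical DG ``telescoping plus jump'' identity but with a variable, time-dependent coefficient, so one must be attentive that no extra $\dpt\alphaht$ terms are generated at the interfaces (they are not, precisely because $\alphaht$ is continuous in time). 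Everything else is a direct computation using the fundamental theorem of calculus in time and the nondegeneracy bound.
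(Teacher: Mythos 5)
Your proposal is correct and follows essentially the same route as the paper: test with $(\wht^{\star,n},c^2\uht(\cdot,0))$, expand the leading term via $\dpt((1+k\alphaht)\dpt\uht)\,\dpt\uht=\tfrac12\dpt\big((1+k\alphaht)(\dpt\uht)^2\big)+\tfrac{k}{2}\dpt\alphaht(\dpt\uht)^2$, telescope against the interface and $\SO$ terms using the flux-jump identity, integrate the gradient and damping terms exactly, and invoke the nondegeneracy bound. The only difference is cosmetic bookkeeping in how the telescoped endpoint values are regrouped with the jumps, which matches the paper's computation.
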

\begin{proof}
We use the definition of~$\Bhtt(\cdot, \cdot)$ given in~\eqref{eq:Bhtt-def} and the identity
\begin{equation*}
\dpt((1 + k \alphaht) \dpt \uht) \dpt \uht = \frac12 \dpt\big( (1 + k \alphaht) (\dpt \uht)^2 \big) + \frac{k}{2} \dpt \alphaht (\dpt \uht)^2,
\end{equation*}
to obtain 
\begin{alignat*}{3}
\Bhtt(\uht, (\wht^{\star, n}, &\ c^2 \uht(\cdot, 0)))\\
& = \sum_{m = 1}^n \big(\dpt((1 + k \alphaht) \dpt \uht), \dpt \uht\big)_{Q_m} \\
\nonumber
& \quad - \sum_{m = 1}^{n  - 1} \big((1 + k \alphaht(\cdot, \tm)) \jump{\dpt \uht}_m, \dpt \uht(\cdot, \tm^+) \big)_{\Omega} + \big((1 + k \alphaht) \dpt \uht, \dpt \uht \big)_{\SO} \\
& \quad + c^2 \int_0^{\tn} \big(\nabla \uht, \nabla \dpt \uht\big)_{\Omega} \dt  + \delta \Norm{\nabla \dpt \uht}{L^2(0, \tn; L^2(\Omega)^d)}^2 + c^2\Norm{\nabla \uht}{L^2(\SO)^d}^2\\
\nonumber
& = \frac12 \sum_{m = 1}^n \Big( \int_{Q_m} \dpt \big((1 + k \alphaht) (\dpt \uht)^2 \big) \dV + k \int_{Q_m} \dpt \alphaht (\dpt \uht)^2 \dV \Big)\\
\nonumber
& \quad - \sum_{m = 1}^{n  - 1} \big((1 + k \alphaht(\cdot, \tm)) \jump{\dpt \uht}_m, \dpt \uht(\cdot, \tm^+) \big)_{\Omega} + \big((1 + k \alphaht) \dpt \uht, \dpt \uht \big)_{\SO} \\
& \quad + \frac{c^2}{2} \Big(\Norm{\nabla \uht}{L^2(\Sn)^d}^2 + \Norm{\nabla \uht}{L^2(\SO)^d}^2 \Big)  + \delta \Norm{\nabla \dpt \uht}{L^2(0, \tn; L^2(\Omega)^d)}^2\\
\nonumber
& = \frac12 \Norm{(1 + k \alphaht(\cdot, \tn))^{\frac12} \dpt \uht(\cdot, \tn^-)}{L^2(\Omega)}^2 + \frac12\Norm{(1 + k \alphaht)^{\frac12} \dpt \uht}{L^2(\SO)}^2\\
\nonumber
& \quad + \sum_{m = 1}^{n  - 1} \int_{\Omega} (1 + k \alphaht(\cdot, \tm)) \Big(\frac12 \jump{(\dpt \uht)^2}_m - \jump{\dpt \uht}_m \dpt \uht(\cdot, \tm^+)\Big) \dx \\
& \quad + \frac{c^2}{2} \Big(\Norm{\nabla \uht}{L^2(\Sn)^d}^2 + \Norm{\nabla \uht}{L^2(\SO)^d}^2 \Big)  + \delta \Norm{\nabla \dpt \uht}{L^2(0, \tn; L^2(\Omega)^d)}^2\\
\nonumber
& \quad + \frac{k}{2} \int_0^{\tn} \int_{\Omega} \dpt \alphaht (\dpt \uht)^2 \dx \dt,
\end{alignat*}
 which, together with the flux-jump identity
\begin{equation*}
\frac12 \jump{w^2}_n - w(\cdot, \tn^+) \jump{w}_n = \frac12 \jump{w}_n^2, \qquad n = 1, \ldots, N - 1,
\end{equation*}
leads to
\begin{alignat*}{3}
\Bhtt(\uht, (\wht^{\star, n},  c^2 \uht(\cdot, 0)))
& = \frac12 \Norm{(1 + k \alphaht(\cdot, \tn))^{\frac12} \dpt \uht(\cdot, \tn^-)}{L^2(\Omega)}^2 + \frac12\Norm{(1 + k \alphaht)^{\frac12} \dpt \uht}{L^2(\SO)}^2\\
\nonumber
& \quad + \sum_{m = 1}^{n  - 1} \Norm{(1 + k \alphaht)(\cdot, \tm))^{\frac12} \jump{\dpt \uht}_m}{L^2(\Omega)}^2 \\
& \quad + \frac{c^2}{2} \Big(\Norm{\nabla \uht}{L^2(\Sn)^d}^2 + \Norm{\nabla \uht}{L^2(\SO)^d}^2 \Big)  + \delta \Norm{\nabla \dpt \uht}{L^2(0, \tn; L^2(\Omega)^d)}^2\\
\nonumber
& \quad + \frac{k}{2} \int_0^{\tn} \int_{\Omega} \dpt \alphaht (\dpt \uht)^2 \dx \dt.
\end{alignat*}
The desired result then follows by using the nondegeneracy assumption~\eqref{eq:nondegeneracy-assumption} of~$\alphaht$.
\end{proof}
\noindent Next, we employ the previous result to derive a partial bound on $\uht$.
\begin{proposition}[Weak partial bound {on} the discrete solution]
\label{prop:weak-continuous-dependence}
Let~$\delta \in [0, \overline{\delta}]$ for some fixed~$\overline{\delta} > 0$, and let Assumption~\ref{asm:nondegeneracy} on the discrete coefficient~$\alphaht$ hold. For~$n = 1, \ldots, N$, the following bound holds:
\begin{alignat}{3}
\nonumber
\frac{(1 - |k| \la)}{4} & \Big(\Norm{\dpt \uht(\cdot, \tn^-)}{L^2(\Omega)}^2 + \sum_{m = 1}^{n - 1} \Norm{\jump{\dpt \uht}_{m}}{L^2(\Omega)}^2 + \Norm{\dpt \uht}{L^2(\SO)}^2 \Big) \\
\nonumber 
& \quad + \frac{c^2}{4} \big(\Norm{\nabla \uht}{L^2(\Sn)^d}^2 + \Norm{\nabla \uht}{L^2(\SO)^d}^2 \big)  + \frac{\delta}{2}\Norm{\nabla \dpt \uht}{L^2(0, \tn; L^2(\Omega)^d)}^2 \\
\nonumber
& \leq \Big(\Norm{f}{L^1(0, \tn; L^2(\Omega))} + \Cinv \Norm{\overline{\tau}^{-1} \xi}{L^1(0, \tn; L^2(\Omega))}\Big) \Norm{\dpt \uht}{L^{\infty}(0, \tn; L^2(\Omega))} \\
\nonumber
& \quad + \frac{|k|}{2} \Norm{\dpt \alphaht}{L^1(0, \tn; L^{\infty}(\Omega))} \Norm{\dpt \uht}{L^{\infty}(0, \tn; L^2(\Omega))}^2 \\
\label{eq:weak-continuous-dependence}
& \quad + \frac{1}{1 - |k| \la}\Norm{(1 + k u_0) u_1}{L^2(\Omega)}^2  + \frac{c^2}{2} \Norm{\nabla u_0}{L^2(\Omega)^d}^2 + \frac{\overline{\delta}}{2} \Norm{\nabla \dpt \mu}{L^2(0, \tn; L^2(\Omega)^d)}^2,
\end{alignat}
where~$\overline{\tau}$ is defined as
$$\overline{\tau}_{|_{I_m}} := \tau_m \quad \text{ for } m = 1, \ldots, N.$$
\end{proposition}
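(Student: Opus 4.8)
The plan is to test the linearized formulation~\eqref{eq:linearized-space-time-formulation} with the pair $(\wht, \zh) = (\wht^{\star, n}, c^2 \uht(\cdot, 0))$ for a fixed $n \in \{1, \ldots, N\}$, with $\wht^{\star, n}$ as in~\eqref{def:wh-star-n}. This choice is admissible: on each $Q_m$ with $m \leq n$ one has $\wht^{\star,n} = \dpt \uht \in \Pp{q-1}{I_m; \Vhp}$ (since $\uht|_{Q_m} \in \Pp{q}{I_m; \Vhp}$), while $\wht^{\star,n} = 0$ on the remaining cylinders, so $\wht^{\star,n} \in \Wht$; and $c^2 \uht(\cdot, 0) \in \Vhp$ because $\uht(\cdot,0)$ is the strongly imposed discrete initial datum. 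Truncating at $\tn$ rather than testing with $\dpt\uht$ on all of $\QT$ is essential: the resulting inequality then holds on every subcylinder $\Omega \times (0, \tn)$, which is what the weight-function argument of Section~\ref{sec:strong-continuous-dependence} needs. For the left-hand side one invokes the coercivity bound of the preceding Proposition; for the right-hand side one expands $\mathcal{L}_{\delta}(\wht^{\star,n}, c^2 \uht(\cdot,0))$ from~\eqref{eq:ell-delta-def}, using that $\wht^{\star,n}$ is supported in $\Omega \times (0, \tn)$, that $\wht^{\star,n}(\cdot, 0) = \dpt \uht(\cdot, 0)$, and that $\dpt \wht^{\star,n} = \dptt \uht$ on $Q_m$ for $m \leq n$, so the $\xi$-contribution reduces to $\sum_{m=1}^n (\xi, \dptt \uht)_{Q_m}$.

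I would then estimate the five terms of $\mathcal{L}_{\delta}$ separately. The source term gives $(f, \dpt\uht)_{\Omega \times (0,\tn)} \leq \Norm{f}{L^1(0,\tn;L^2(\Omega))} \Norm{\dpt\uht}{L^\infty(0,\tn;L^2(\Omega))}$ by H\"older in time and Cauchy--Schwarz in space. The terms $((1+ku_0)u_1, \dpt\uht(\cdot,0))_\Omega$ and $c^2(\nabla u_0, \nabla\uht(\cdot,0))_\Omega$ are handled by Cauchy--Schwarz and Young's inequality, absorbing the $\dpt\uht(\cdot,0)$- and $\nabla\uht(\cdot,0)$-factors into the $\SO$-terms $\tfrac{1-|k|\la}{2}\Norm{\dpt\uht}{L^2(\SO)}^2$ and $\tfrac{c^2}{2}\Norm{\nabla\uht}{L^2(\SO)^d}^2$ supplied by the coercivity bound; the damping perturbation $\delta(\nabla\dpt\mu, \nabla\dpt\uht)_{\Omega\times(0,\tn)}$ is treated analogously, absorbing $\tfrac{\delta}{2}\Norm{\nabla\dpt\uht}{L^2(0,\tn;L^2(\Omega)^d)}^2$ into the left and using $\delta \leq \overline{\delta}$ on the remainder. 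The genuinely delicate term is $\sum_{m=1}^n (\xi, \dptt\uht)_{Q_m}$: since no \emph{a priori} control of the discrete second time derivative is available, I would estimate $(\xi, \dptt\uht)_{Q_m} \leq \Norm{\xi}{L^1(I_m;L^2(\Omega))}\Norm{\dptt\uht}{L^\infty(I_m;L^2(\Omega))}$ and then apply the polynomial inverse estimate in time~\eqref{eq:inverse-estimate-time} to $\dpt\uht|_{I_m} \in \Pp{q-1}{I_m;\Vhp}$, giving $\Norm{\dptt\uht}{L^\infty(I_m;L^2(\Omega))} \leq \Cinv \tau_m^{-1}\Norm{\dpt\uht}{L^\infty(I_m;L^2(\Omega))}$; summing over $m$ and recognising the piecewise-constant weight $\overline{\tau}$ produces $\Cinv \Norm{\overline{\tau}^{-1}\xi}{L^1(0,\tn;L^2(\Omega))}\Norm{\dpt\uht}{L^\infty(0,\tn;L^2(\Omega))}$.

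It then remains to handle the indefinite term $\tfrac{k}{2}\int_0^{\tn}\!\int_\Omega \dpt\alphaht (\dpt\uht)^2\,\dx\dt$ left over on the left-hand side by the coercivity bound: moving it to the right and estimating $\big|\tfrac{k}{2}\int_0^{\tn}\!\int_\Omega \dpt\alphaht (\dpt\uht)^2\,\dx\dt\big| \leq \tfrac{|k|}{2}\Norm{\dpt\alphaht}{L^1(0,\tn;L^\infty(\Omega))}\Norm{\dpt\uht}{L^\infty(0,\tn;L^2(\Omega))}^2$. Collecting all the estimates, absorbing the $\uht$-dependent boundary and damping terms into the left, and using Assumption~\ref{asm:nondegeneracy} to guarantee $1-|k|\la > 0$ (so that the absorptions and the factor $\tfrac{1}{1-|k|\la}$ are legitimate), one arrives at~\eqref{eq:weak-continuous-dependence}. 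I expect the $\xi$-term to be the main obstacle, for two reasons: the inverse estimate is what forces the weight $\overline{\tau}^{-1}$ into the bound, and it leaves the quantity $\Norm{\dpt\uht}{L^\infty(0,\tn;L^2(\Omega))}$ --- which is \emph{not} controlled by the coercivity lower bound --- on the right-hand side. This is precisely why the present estimate is only ``weak/partial'' and cannot be closed on its own; the missing $L^\infty$-in-time control is recovered afterwards through the auxiliary weight-function argument of Section~\ref{sec:strong-continuous-dependence}.
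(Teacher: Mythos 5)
Your proposal is correct and follows essentially the same route as the paper: testing with $(\wht^{\star,n}, c^2\uht(\cdot,0))$, invoking the coercivity lower bound, estimating the five terms of $\mathcal{L}_{\delta}$ exactly as you describe (including the inverse estimate in time for the $\xi$-term, which is what produces the $\overline{\tau}^{-1}$ weight), and moving the indefinite $\dpt\alphaht$-term to the right-hand side. Your closing observation about why the bound cannot be closed on its own matches the paper's own remark on the limitations of this proposition.
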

\begin{proof}
Let~$\wht^{\star, n}$ be given by~\eqref{def:wh-star-n}. Using the definition in~\eqref{eq:ell-delta-def} of the linear functional~${\mathcal{L}_{\delta}}(\cdot)$, the polynomial inverse estimate~\eqref{eq:inverse-estimate-time}, and the H\"older and the Cauchy--Schwarz inequalities, we obtain the following bound:
\begin{alignat*}{3}
\nonumber
{\mathcal{L}_{\delta}}(\wht^{\star, n}, c^2 \uht(\cdot, 0)) & = \sum_{m = 1}^n \big(f, \dpt \uht\big)_{Q_m} + \big((1 + k u_0) u_1, \dpt \uht(\cdot, 0) \big)_{\Omega} \\
\nonumber
& \quad + \delta \sum_{m = 1}^n (\nabla \dpt \mu, \nabla \dpt \uht)_{Q_m} + \sum_{m = 1}^n \big(\xi, \dptt \uht \big)_{Q_m} + c^2 (\nabla u_0, \nabla  \uht(\cdot, 0))_{\Omega} \\
\nonumber
& \leq \Norm{f}{L^1(0, \tn; L^2(\Omega))} \Norm{\dpt \uht}{L^{\infty}(0, \tn; L^2(\Omega))} + \Norm{(1 + k u_0) u_1}{L^2(\Omega)} \Norm{\dpt \uht}{L^2(\SO)} \\
\nonumber
& \quad + \delta \Norm{\nabla \dpt \mu}{L^2(0, \tn; L^2(\Omega)^d)} \Norm{\nabla \dpt \uht}{L^2(0, \tn; L^2(\Omega)^d)} \\
\nonumber
& \quad + \Cinv \Big(\sum_{m = 1}^n \Norm{\tau_m^{-1} \xi}{L^1(I_m; L^2(\Omega))} \Big) \Norm{\dpt \uht}{L^{\infty}(0, \tn; L^2(\Omega))} \\
\nonumber
& \quad + c^2 \Norm{\nabla u_0}{L^2(\Omega)^d}\Norm{\nabla \uht}{L^2(\SO)^d}.
\end{alignat*}
Young's inequality can then be used to deduce
\begin{alignat}{3}
{\mathcal{L}_{\delta}}(\wht^{\star, n}, c^2 \uht(\cdot, 0)) 
\nonumber
& \le \Norm{f}{L^1(0, \tn; L^2(\Omega))} \Norm{\dpt \uht}{L^{\infty}(0, \tn; L^2(\Omega))} \\
\nonumber
& \quad + \frac{1}{1 - |k| \la} \Norm{(1 + k u_0) u_1}{L^2(\Omega)}^2 + \frac{(1 - |k| \la)}{4} \Norm{\dpt \uht}{L^2(\SO)}^2 \\
\nonumber
& \quad + \frac{\delta}{2} \Norm{\nabla \dpt \mu}{L^2(0, \tn; L^2(\Omega)^d)}^2 + \frac{\delta}{2} \Norm{\nabla \dpt \uht}{L^2(0, \tn; L^2(\Omega)^d)}^2 \\
\nonumber
& \quad + \Cinv \Norm{\overline{\tau}^{-1} \xi}{L^1(0, \tn; L^2(\Omega))} \Norm{\dpt \uht}{L^{\infty}(0, \tn; L^2(\Omega))} \\
\label{eq:bound-ell-whstar}
& \quad + c^2 \Norm{\nabla u_0}{L^2(\Omega)^d}^2 + \frac{c^2}{4} \Norm{\nabla \uht}{L^2(\SO)^d}^2.
\end{alignat}
The desired result~\eqref{eq:weak-continuous-dependence} then follows by using the definition of the linearized space--time formulation~\eqref{eq:linearized-space-time-formulation}, and bounds \eqref{eq:coercivity-Bhtt-whstar} and~\eqref{eq:bound-ell-whstar}.
\end{proof}

\begin{remark}[Limitations of Proposition~\ref{prop:weak-continuous-dependence}] The terms multiplying~$\Norm{\dpt \uht}{L^{\infty}(0, \tn; L^2(\Omega))}$ on the right-hand side of~\eqref{eq:weak-continuous-dependence} cannot be controlled, as the left-hand side contains terms only related to the energy at the discrete times~$\{\tm\}_{m = 1}^n$. For this reason, we refer to Proposition~\ref{prop:weak-continuous-dependence} as ``weak partial bound" 
on the discrete solution.
We remedy this in the next section by using a nonstandard test function in combination with Proposition~\ref{prop:weak-continuous-dependence}. 
\eremk
\end{remark}

\subsection{Strong continuous dependence on the data\label{sec:strong-continuous-dependence}}
Henceforth, we use the notation~$a \lesssim b$ to indicate the existence of a positive constant~$C$ independent of the
meshsize~$h$, the maximum time step~$\tau$, and the damping parameter~$\delta$ such that~$a \le C b$. Moreover, we write~$a \simeq b$ meaning that~$a \lesssim b$ and~$b \lesssim a$.

\begin{proof}[Proof of Theorem~\ref{thm:strong-continuous-dependence}]
We make use of the auxiliary weight function introduced in~\eqref{eq:polynomial-weight}. For~$n = 1, \ldots, N$, we consider the following test functions:
\begin{equation*}
\wht^{\bullet, n} {}_{|_{Q_m}} := {\begin{cases}
\Pi_{q - 1}^t (\varphi_n \dpt \uht{}_{|_{Q_n}}) & \text{if }
m = n,\\
0 & \text{otherwise},
\end{cases}
}\quad \text{ and } \quad 
\zh^{\bullet, n} := \begin{cases}
c^2 \theta \uht(\cdot, 0) & \text{if } n = 1,\\
0 & \text{otherwise}.
\end{cases}
\end{equation*}
For the sake of clarity, we distinguish two cases~$(n = 1)$ and~$(n > 1)$ in the proof. However, this is not a recursive argument.

\paragraph{(Case~$n = 1$).} We choose as test function~$(\wht^{\bullet, 1}, \zh^{\bullet, 1})$ in~\eqref{eq:linearized-space-time-formulation}, and obtain the identity
\begin{equation*}
\Bhtt(\uht, (\wht^{\bullet, 1}, \zh^{\bullet, 1})) = \mathcal{L}_{\delta}(\wht^{\bullet, 1}, \zh^{\bullet, 1}),
\end{equation*}
where
\begin{alignat}{2}
\nonumber
\Bhtt(\uht, (\wht^{\bullet, 1}, \zh^{\bullet, 1})) & = \big(\dpt((1 + k \alphaht) \dpt \uht), \Pi_{q-1}^t(\varphi_1 \dpt 
\uht)\big)_{Q_1}
\\
\nonumber
& \quad + \big((1 + k \alphaht(\cdot, 0)) \dpt \uht(\cdot, 0), 
\Pi_{q-1}^t(\varphi_1 \dpt \uht)(\cdot, 0) \big)_{\Omega}  \\
\nonumber
& \quad + c^2 \big(\nabla \uht, \nabla \Pi_{q-1}^t(\varphi_1 \dpt \uht)\big)_{Q_1} + c^2 \theta \Norm{\nabla \uht}{L^2(\SO)^d}^2
\\
\nonumber
& \quad + \delta \big(\nabla \dpt \uht, \nabla \Pi_{q-1}^t(\varphi_1 \dpt \uht) 
\big)_{Q_1}\\
\label{eq:Bhtt-wht-bullet-Q1}
& =: L_1 + L_2 + L_3 + L_4 + L_5,
\end{alignat}
and
\begin{alignat}{3}
\nonumber
{\mathcal{L}_{\delta}}(\wht^{\bullet, 1}, \zh^{\bullet, 1})
& = \big(f, \Pi_{q-1}^t (\varphi_1 \dpt \uht) \big)_{Q_1} + \big((1 + k u_0) u_1, 
\Pi_{q-1}^t(\varphi_{1} \dpt \uht)(\cdot, 0) \big)_{\Omega} \\
\nonumber
& \quad + \delta (\nabla \dpt \mu, \nabla \Pi_{q-1}^t(\varphi_1 \dpt \uht) )_{Q_1} + \big(\xi, \dpt \Pi_{q-1}^t (\varphi_1 \dpt \uht) \big)_{Q_1}\\
\nonumber
& \quad + c^2 \theta (\nabla u_0, \nabla \uht(\cdot, 0) )_{\Omega} \\
\label{eq:ell-wht-bullet-Q1}
& =: R_1 + R_2 + R_3 + R_4 + R_5.
\end{alignat}
We first estimate the terms~$\{L_i\}_{i = 1}^5$ on the right-hand side of~\eqref{eq:Bhtt-wht-bullet-Q1}.

\paragraph{Bound {on}~$L_1 + L_2$.} Splitting~$\Pi_{q-1}^t(\varphi \dpt \uht)$ as~$\varphi \dpt \uht - (\Id - \Pi_{q-1}^t)(\varphi \dpt \uht)$ leads to
\begin{alignat}{3}
\nonumber
L_1 + L_2 & = \big(\dpt((1 + k \alphaht) \dpt \uht), \Pi_{q-1}^t(\varphi_1 \dpt 
\uht)\big)_{Q_1}
\\
\nonumber
& \quad + \big((1 + k \alphaht(\cdot, 0)) \dpt \uht(\cdot, 0), 
\Pi_{q-1}^t(\varphi_1 \dpt \uht)(\cdot, 0) \big)_{\Omega} \\
\nonumber
& = \big(\dpt((1 + k \alphaht) \dpt \uht), \varphi_1 \dpt \uht \big)_{Q_1}
\\
\nonumber
& \quad + \theta \big((1 + k \alphaht(\cdot, 0)) \dpt \uht(\cdot, 0), 
\dpt \uht(\cdot, 0) \big)_{\Omega} \\
\nonumber
& \quad - \big(\dpt((1 + k \alphaht) \dpt \uht), (\Id - \Pi_{q-1}^t)(\varphi_1 \dpt \uht) \big)_{Q_1} 
\\
\label{eq:aux-identity-n=1-split-new}
& \quad - \big((1 + k \alphaht(\cdot, 0)) \dpt \uht (\cdot, 0), (\Id 
- \Pi_{q-1}^t) (\varphi_{1} \dpt \uht) (\cdot, 0) \big)_{\Omega},
\end{alignat}
where we have used the identity~$\varphi_1(0) = \theta$.

Since~$\varphi_1(t_1) = \theta - \zeta_q$, using the identity
\begin{equation*}
\dpt((1 + k \alphaht) \dpt \uht) \varphi \dpt \uht = \frac{1}{2} \dpt((1 + k \alphaht) \varphi (\dpt \uht)^2) - \frac12 (1 + k \alphaht) \varphi' (\dpt \uht)^2 + \frac{k \varphi}{2}\dpt \alphaht (\dpt \uht)^2,
\end{equation*}
the nondegeneracy bound {on}~$(1 + k \alphaht)$ in~\eqref{eq:nondegeneracy-assumption}, the uniform bound~\eqref{eq:uniform-bound-phi_n-1} for~$\varphi_n$, the identity~$\varphi_n' = - \lambda_n$, and the H\"older inequality the first two terms on the right-hand side of~\eqref{eq:aux-identity-n=1-split-new} can be bounded as follows:
\begin{alignat*}{3}
& \big(\dpt((1 + k \alphaht) \dpt \uht), \varphi_1 \dpt \uht \big)_{Q_1} 
\\
\nonumber
& \quad + \theta \big((1 + k \alphaht(\cdot, 0)) \dpt \uht (\cdot, 0), \dpt \uht(\cdot, 0) \big)_{\Omega} \\
& = \frac12 \int_{Q_1} \dpt((1 + k \alphaht) \varphi_1 (\dpt \uht)^2) \dV + \frac{\lambda_1}{2} \Norm{(1 + k \alphaht)^{\frac12} \dpt \uht}{L^2(Q_1)}^2 \\
& \quad + \frac{k}{2} \int_{Q_1} \varphi_1 \dpt \alphaht (\dpt \uht)^2 \dV + \theta \Norm{(1 + k \alphaht)^{\frac12} \dpt \uht}{L^2(\SO)}^2 \\
& =  \frac{(\theta - \zeta_q)}{2} \Norm{(1 + k \alphaht(\cdot, t_1))^{\frac12} \dpt \uht(\cdot, t_1^-)}{L^2(\Omega)}^2 + \frac{\theta}{2} \Norm{(1 + k \alphaht(\cdot, 0))^{\frac12} \dpt \uht(\cdot, 0)}{L^2(\Omega)}^2 \\
& \quad + \frac{\lambda_1}{2} \int_{Q_1} (1 + k \alphaht) (\dpt \uht)^2 \dV + \frac{k}{2} \int_{Q_1} \varphi_1 \dpt \alphaht (\dpt \uht)^2 \dV \\
& \geq \frac{(\theta - \zeta_q) (1 - |k| \la) }{2} \Norm{\dpt \uht(\cdot, t_1^-)}{L^2(\Omega)}^2 + \frac{\theta}{2} (1 - |k| \la) \Norm{\dpt \uht(\cdot, 0)}{L^2(\Omega)}^2 \\
& \quad + \frac{(1 - |k|\la) \lambda_1}{2}  \Norm{\dpt \uht}{L^2(Q_1)}^2 - \frac{|k| \theta}{2} \Norm{\dpt \alphaht}{L^{1}(I_1; L^{\infty}(\Omega))} \Norm{\dpt \uht}{L^{\infty}(I_1; L^2(\Omega))}^2.
\end{alignat*}

As for the third term on the right-hand side of~\eqref{eq:aux-identity-n=1-split-new}, we use the orthogonality properties of~$\Pi_{q-1}^t$, the H\"older inequality, estimate~\eqref{eq:estimate-phi_n-1}, the approximation properties of~$\Pi_0^t$ from Lemma~\ref{lemma:polynomial-approx-time}, the inverse estimate~\eqref{eq:inverse-estimate-time}, Lemma~\ref{lemma:Linfty-L2}, and the inequality~$\Norm{w}{L^2(\In)} \le \sqrt{\tau_n} \Norm{w}{L^{\infty}(\In)}$ to obtain the following bound:
\begin{alignat*}{3}
- \big( &\dpt((1 + k \alphaht) \dpt \uht), (\Id - \Pi_{q-1}^t)(\varphi_1 \dpt \uht) \big)_{Q_1} \\
& = - k \big( \dpt\alphaht \dpt \uht, (\Id - \Pi_{q-1}^t)(\varphi_1 \dpt \uht) \big)_{Q_1} - k \big( (\Id - \Pi_0^t)\alphaht \dptt \uht, (\Id - \Pi_{q-1}^t)(\varphi_1 \dpt \uht) \big)_{Q_1} \\
& \geq -|k| \Norm{\dpt \alphaht}{L^1(I_1; L^{\infty}(\Omega))} \Norm{\dpt \uht}{L^{\infty}(I_1; L^2(\Omega))} \Norm{(\Id - \Pi_{q-1}^t)(\varphi_1 \dpt \uht)}{L^{\infty}(I_1; L^2(\Omega))} \\
& \quad - |k| \Norm{(\Id - \Pi_0^t) \alphaht}{L^1(I_1; L^{\infty}(\Omega))} \Norm{\dptt \uht}{L^{\infty}(I_1; L^2(\Omega))} \Norm{(\Id - \Pi_{q-1}^t) (\varphi_1 \dpt \uht)}{L^{\infty}(I_1; L^2(\Omega))} \\
& \geq -|k| (1 + \Cinv) \tau_1^{-\frac12} \Norm{\dpt \alphaht}{L^1(I_1; L^{\infty}(\Omega))} \Norm{\dpt \uht}{L^{\infty}(I_1; L^2(\Omega))} \Norm{(\Id - \Pi_{q-1}^t)(\varphi_1 \dpt \uht)}{L^2(Q_1)} \\
& \quad - |k| \Cp \Cinv (1 + \Cinv) \tau_1^{-\frac12} \Norm{\dpt \alphaht}{L^1(I_1; L^{\infty}(\Omega))} \Norm{\dpt \uht}{L^{\infty}(I_1; L^2(\Omega))} \Norm{(\Id - \Pi_{q-1}^t) (\varphi_1 \dpt \uht)}{L^2(Q_1)} \\
& \geq -|k| \zeta_q (1 + \Cinv) \tau_1^{-\frac12} \Norm{\dpt \alphaht}{L^1(I_1; L^{\infty}(\Omega))} \Norm{\dpt \uht}{L^{\infty}(I_1; L^2(\Omega))} \Norm{\dpt \uht}{L^2(Q_1)} \\
& \quad - |k| \zeta_q \Cp \Cinv (1 + \Cinv) \tau_1^{-\frac12} \Norm{\dpt \alphaht}{L^1(I_1; L^{\infty}(\Omega))} \Norm{\dpt \uht}{L^{\infty}(I_1; L^2(\Omega))} \Norm{\dpt \uht}{L^2(Q_1)} \\
& \geq - \zeta_q |k| (1 + \Cinv) (1 + \Cp \Cinv) \Norm{\dpt \alphaht}{L^1(I_1; L^{\infty}(\Omega))} \Norm{\dpt \uht}{L^{\infty}(I_1; L^2(\Omega))}^2.
\end{alignat*}

The last term on the right-hand side of~\eqref{eq:aux-identity-n=1-split-new} can be bounded using the Cauchy--Schwarz inequality, estimate~\eqref{eq:estimate-phi_n-2}, the nondegeneracy bound {on}~$(1 + k\alphaht)$ in Assumption~\ref{asm:nondegeneracy}, the Young inequality, and the inequality~$\Norm{w}{L^2(\In)} \le \sqrt{\tau_n} \Norm{w}{L^{\infty}(\In)}$ as follows:
\begin{alignat*}{3}
- \big((1 + k \alphaht(\cdot, 0)) & \dpt \uht(\cdot, 0), (\Id 
- \Pi_{q-1}^t) (\varphi_1 \dpt \uht) (\cdot, 0) \big)_{\Omega} \\
& \geq -\frac{(1 - |k| \la) \theta}{4} \Norm{\dpt \uht}{L^2(\SO)}^2  - \frac{1}{\theta} \Big(\frac{1 + |k|\ua}{1 - |k| \la}\Big) \Norm{(\Id - \Pi_{q-1}^t)(\varphi_1 \dpt \uht)(\cdot, 0)}{L^2(\Omega)}^2 \\
& \geq -\frac{(1 - |k| \la) \theta}{4} \Norm{\dpt \uht}{L^2(\SO)}^2  - \frac{\lambda_1}{4 \theta} \Big(\frac{1 + |k|\ua}{1 - |k| \la}\Big) \Norm{\dpt \uht}{L^2(Q_1)}^2 \\
& \geq -\frac{(1 - |k| \la) \theta}{4} \Norm{\dpt \uht}{L^2(\SO)}^2  - \frac{\zeta_q}{4 \theta} \Big(\frac{1 + |k|\ua}{1 - |k| \la}\Big) \Norm{\dpt \uht}{L^{\infty}(I_1; L^2(\Omega))}^2.
\end{alignat*}

\paragraph{Bound {on}~$L_3 + L_4$.} Adding and subtracting suitable terms, we have
\begin{alignat*}{3}
\nonumber
L_3
\nonumber
& = c^2\big(\nabla \uht, \varphi_1 \nabla \dpt \uht \big)_{Q_1}
- c^2\big(\nabla \uht, (\Id - \Pi_{q-1}^t) (\varphi_1 \nabla \dpt \uht) \big)_{Q_1} \\
& =: L_3^{(a)} + L_3^{(b)}.
\end{alignat*}

Using the Leibniz rule, the uniform  bound~\eqref{eq:uniform-bound-phi_n-1} for~$\varphi_1$, {and the fact that~$\varphi_1'(t) = \lambda_1$, $\varphi_1(0) = \theta$, and~$\varphi_1(t_1) = \theta - \zeta_q$}, we get
\begin{alignat*}{3}
L_3^{(a)} + L_4
& = \frac{c^2}{2} \int_{Q_1} \varphi_1 \dpt(|\nabla \uht|^2) \dV + c^2 \theta \Norm{\nabla \uht}{L^2(\SO)^d}^2 \\
& = \frac{c^2}{2} \Big(\int_{Q_1} \dpt(\varphi_1 |\nabla \uht|^2) - \int_{Q_1} \varphi_1' |\nabla \uht|^2 \dV \Big) + c^2 \theta \Norm{\nabla \uht}{L^2(\SO)^d}^2 \\
& = \frac{c^2}{2} \Big(\int_{Q_1} \dpt(\varphi_1 |\nabla \uht|^2) + \lambda_1 \Norm{\nabla \uht}{L^2(Q_1)}^2 \Big) + c^2 \theta \Norm{\nabla \uht}{L^2(\SO)^d}^2 \\
& = \frac{c^2}{2} (\theta - \zeta_q) \Norm{\nabla \uht}{L^2(\Sigma_1)^d}^2 + \frac{c^2 \theta}{2} \Norm{\nabla \uht}{L^2(\SO)^d}^2 + \frac{c^2 \lambda_1 }{2} \Norm{\nabla \uht}{L^2(Q_1)^d}^2.
\end{alignat*}

According to identity~\eqref{eq:Id-Pi-explicit}, there exists a function~$\sigma_1(\bx)$ such that
\begin{equation*}
\uht = (\Id - \Pi_{q - 1}^t) \uht + \Pi_{q - 1}^t \uht = \sigma_1(\bx) L_q(t) + \Pi_{q - 1}^t \uht.
\end{equation*}
Moreover, we have
\begin{equation*}
\dpt \uht = \sigma_1(\bx) L_q'(t) + \dpt \Pi_{q - 1}^t \uht.
\end{equation*}
Therefore, by using the orthogonality properties of~$\Pi_{q - 1}^t$ and simple manipulations, we obtain
\begin{alignat}{3}
\nonumber
L_3^{(b)} 
& = - c^2 \big(\nabla \uht, (\Id - \Pi_{q-1}^t) (\varphi_1 \nabla \dpt \uht) \big)_{Q_1} 
= -c^2 \big((\Id - \Pi_{q - 1}^t) \nabla \uht, \varphi_1 \nabla \dpt \uht \big)_{Q_1} \\
\nonumber
& = -c^2 \big( (\Id - \Pi_{q - 1}^t) \nabla \uht, (\theta - \lambda_1 t) \nabla \sigma_1(\bx)  L_q'(t) + (\theta - \lambda_1 t) \dpt \Pi_{q - 1}^t \nabla \uht\big)_{Q_1} \\
\nonumber
& = c^2 \lambda_1 \big( (\Id - \Pi_{q - 1}^t) \nabla \uht, \nabla \sigma_1(\bx) t L_q'(t) \big)_{Q_1} \\
\nonumber
& = c^2 \lambda_1 \big( \nabla \sigma_1(\bx) L_q(t), \nabla \sigma_1(\bx) t L_q'(t) \big)_{Q_1} \\
\nonumber
&  = c^2 \lambda_1  \Norm{\nabla \sigma_1}{L^2(\Omega)^d}^2 \int_{I_1} t L_q(t) L_q'(t) \dt \\
\label{eq:term-M2}
& = \frac{c^2 q \zeta_q}{2q + 1} \Norm{\nabla \sigma_1}{L^2(\Omega)^d}^2 \geq 0,
\end{alignat}
where, in the last line, we have used the identity
\begin{equation*}
\int_{I_1} t L_q(t) L_q'(t) \dt = \frac12 \int_{I_1} t (L_q^2(t))' \dt = \frac{\tau_1 q}{2q + 1}.
\end{equation*}

\paragraph{Bound {on}~$L_5$.}
The commutativity of the orthogonal projection~$\Pi_{q-1}^t$ and the spatial gradient~$\nabla$, the fact that~$\nabla \dpt \uht(\bx, \cdot)$ belongs to~$\Pp{q-1}{\Tt}^d$ for all~$\bx \in \Omega$, and the uniform bound~\eqref{eq:uniform-bound-phi_n-1} for~$\varphi_1$ lead to
\begin{alignat*}{3}
L_5 = \delta \big(\nabla \dpt \uht, \nabla \Pi_{q-1}^t(\varphi_1 \dpt \uht) \big)_{Q_1}
& = \delta \big(\nabla \dpt \uht, \Pi_{q-1}^t\nabla (\varphi_1 \dpt \uht) \big)_{Q_1} \\
& = \delta \big(\nabla \dpt \uht, \varphi_1 \nabla \dpt \uht \big)_{Q_1} \\
& = \delta \Norm{\sqrt{\varphi_1} \nabla \dpt \uht}{L^2(Q_1)^d}^2 \\
& \geq \delta (\theta - \zeta_q) \Norm{\nabla \dpt \uht}{L^2(Q_1)^d}^2.
\end{alignat*}
We now focus on the terms~$\{R_i\}_{i = 1}^5$ on the right-hand side of~$\eqref{eq:ell-wht-bullet-Q1}$.

\paragraph{Bound {on}~$R_1$.}
We use the orthogonality properties of~$\Pi_{q - 1}^t$, the stability bound in Lemma~\ref{lemma:stability-L2-proj-Lp} for~$\Pi_{q-1}^t$, the H\"older inequality, and the uniform bound~\eqref{eq:uniform-bound-phi_n-1} for~$\varphi_1$ to obtain
\begin{alignat*}{3}
R_1 = (f, \Pi_{q - 1}^t (\varphi_1 \dpt \uht))_{Q_1} 
& \le \Norm{f}{L^1(I_1; L^2(\Omega))} \Norm{\Pi_{q - 1}^t (\varphi_1 \dpt \uht)}{L^{\infty}(I_1; L^2(\Omega))} \\
& \le \CS \theta \Norm{f}{L^1(I_1; L^2(\Omega))} \Norm{\dpt \uht}{L^{\infty}(I_1; L^2(\Omega))}.
\end{alignat*}

\paragraph{Bound {on}~$R_2$.}
The second term on the right-hand side of~\eqref{eq:ell-wht-bullet-Q1} can be bounded using the Cauchy--Schwarz inequality, the polynomial trace inequality~\eqref{eq:polynomial-trace-inequality}, the stability properties of~$\Pi_{q-1}^t$, the uniform bound in~\eqref{eq:uniform-bound-phi_n-1} for~$\varphi_1$, and~the inequality~$\Norm{w}{L^2(I_n)} \le \sqrt{\tau_n} \Norm{w}{L^{\infty}(I_n)}$ as follows:
\begin{alignat*}{3}
R_2 & = \big( (1 + k u_0) u_1, \Pi_{q - 1}^t (\varphi_1 \dpt \uht)(\cdot, 0)\big)_{\Omega} \\
& \le \Norm{(1 + k u_0) u_1}{L^2(\Omega)} \Norm{\Pi_{q - 1}^t (\varphi_1 \dpt \uht)(\cdot, 0)}{L^2(\Omega)} \\
& \le \Ctr^{\star} \theta \tau_1^{-\frac12} \Norm{(1 + k u_0) u_1}{L^2(\Omega)} \Norm{\dpt \uht}{L^2(Q_1)} \\
& \le \Ctr^{\star} \theta \Norm{(1 + k u_0) u_1}{L^2(\Omega)} \Norm{\dpt \uht}{L^{\infty}(I_1; L^2(\Omega))} \\
& \le \frac{(\Ctr^{\star} \theta)^2}{2 \gamma} \Norm{(1 
+ k u_0) u_1}{L^2(\Omega)}^2 + \frac{\gamma}{2} \Norm{\dpt \uht}{L^{\infty}(I_1; L^2(\Omega))}^2,
\end{alignat*}
for all~$\gamma > 0$.

\paragraph{Bound {on}~$R_3$.} Using the commutativity of the spatial gradient operator and~$\Pi_{q-1}^t$, the stability properties of the orthogonal projection~$\Pi_{q-1}^t$, the uniform bound in~\eqref{eq:uniform-bound-phi_n-1} for~$\varphi_n$, and the Young inequality, we get
\begin{alignat*}{3}
R_3 & = \delta (\nabla \dpt \mu, \nabla \Pi_{q-1}^t(\varphi_1 \dpt \uht) )_{Q_1} \\
&  \le \delta \theta \Norm{\nabla \dpt \mu}{L^2(Q_1)^d} \Norm{\nabla \dpt \uht}{L^2(Q_1)^d} \\
& \le \frac{\overline{\delta} \theta^2}{2(\theta - \zeta_q)} \Norm{\nabla \dpt \mu}{L^2(Q_1)^d}^2 + \frac{\delta (\theta - \zeta_q)}{2} \Norm{\nabla \dpt \uht}{L^2(Q_1)^d}^2.
\end{alignat*}

\paragraph{Bound {on}~$R_4$.} We use the H\"older inequality, the polynomial inverse estimate~\eqref{eq:inverse-estimate-time}, the uniform bound in~\eqref{eq:uniform-bound-phi_n-1} for~$\varphi_n$, and the inequality~$\Norm{w}{L^2(\In)} \le \sqrt{\tau_n} \Norm{w}{L^{\infty}(\In)}$ to obtain
\begin{alignat*}{3}
R_4 & = \big(\xi, \dpt \Pi_{q-1}^t (\varphi_1 \dpt \uht) \big)_{Q_1} \\
& \le \Norm{\xi}{L^1(I_1; L^2(\Omega))} \Norm{\dpt \Pi_{q-1}^t (\varphi_1 \dpt \uht)}{L^{\infty}(I_1; L^2(\Omega))} \\
& \le \Cinv \Norm{\tau_1^{-1} \xi}{L^1(I_1; L^2(\Omega))} \Norm{\Pi_{q-1}^t (\varphi_1 \dpt \uht)}{L^{\infty}(I_1; L^2(\Omega))} \\
& \le \Cinv \CS \Norm{\overline{\tau}^{-1}\xi}{L^1(I_1; L^2(\Omega))} \Norm{\varphi_1 \dpt \uht}{L^{\infty}(I_1; L^2(\Omega))} \\
& \le \Cinv \CS \theta \Norm{\overline{\tau}^{-1} \xi}{L^1(I_1; L^2(\Omega))} \Norm{\dpt \uht}{L^{\infty}(I_1; L^2(\Omega))}.
\end{alignat*}

\paragraph{Bound {on}~$R_5$.} By using the Cauchy--Schwarz and the Young inequalities, we have
\begin{alignat*}{3}
R_5 = c^2 \theta (\nabla u_0, \nabla \uht(\cdot, 0) )_{\Omega} \le \frac{c^2 \theta}{2} \Norm{\nabla u_0}{L^2(\Omega)^d}^2 + \frac{c^2 \theta}{2} \Norm{\nabla \uht}{L^2(\SO)^d}^2.
\end{alignat*}

\paragraph{Conclusion of case~$n = 1$.}
From Lemma~\ref{lemma:Linfty-L2}, we deduce that
\begin{equation}
\label{eq:Linfty-L2-dt-uht}
\begin{split}
\frac{\zeta_q}{(1 + \Cinv)^2}\Norm{\dpt \uht}{L^{\infty}(\In; L^2(\Omega))}^2 & \le \lambda_n \Norm{\dpt \uht}{L^2(\Qn)}^2, \\
\frac{\zeta_q}{(1 + \Cinv)^2}\Norm{\nabla \uht}{L^{\infty}(\In; L^2(\Omega)^d)}^2 & \le \lambda_n \Norm{\nabla \uht}{L^2(\Qn)^d}^2,
\end{split}
\end{equation}
which, combined with all the above estimates, yields
\begin{alignat*}{3}
& (\theta - \zeta_q) \Big( \frac{(1 - |k| \la)}{2} \Norm{\dpt \uht(\cdot, t_1^{-})}{L^2(\Omega)}^2 + \frac{c^2}{2} \Norm{\nabla \uht}{L^2(\Sigma_1)^d}^2\Big) + \frac{\theta (1 - |k| \la)}{4} \Norm{\dpt \uht}{L^2(\SO)}^2
\\
& + \frac{\zeta_q}{2} \bigg(\frac{(1 - |k| \la) }{(1 + \Cinv)^2}  - \frac{1}{2 \theta} \Big(\frac{1 + |k| \ua}{1 - |k| \la}\Big) - \frac{\gamma}{2}\bigg) \Norm{\dpt \uht}{L^{\infty}(I_1; L^2(\Omega))}^2 + \frac{\zeta_q c^2}{2(1 + \Cinv)^2} \Norm{\nabla \uht}{L^{\infty}(I_1; L^2(\Omega)^d)}^2 \\
& + \frac{\delta (\theta - \zeta_q)}{2} \Norm{\nabla \dpt \uht}{L^2(Q_1)^d}^2 \\
& \qquad \qquad \le \CS \theta \Norm{f}{L^1(I_1; L^2(\Omega))} \Norm{\dpt \uht}{L^{\infty}(I_1; L^2(\Omega))} + \frac{(\Ctr^{\star} \theta)^2}{2 \gamma} \Norm{(1 + k u_0) u_1}{L^2(\Omega)}^2  + \frac{c^2 \theta}{2} \Norm{\nabla u_0}{L^2(\Omega)^d}^2 \\
& \qquad \qquad \quad + \frac{\overline{\delta} \theta^2}{2(\theta - \zeta_q)} \Norm{\nabla \dpt \mu}{L^2(Q_1)^d}^2  + \Cinv \CS \theta \Norm{\overline{\tau}^{-1} \xi}{L^1(I_1; L^2(\Omega))} \Norm{\dpt \uht}{L^{\infty}(I_1; L^2(\Omega))} \\
& \qquad \qquad \quad + \Big(\frac{\theta}{2} + \zeta_q  (1 + \Cinv) (1 + \Cp \Cinv)\Big)|k| \Norm{\dpt \alphaht}{L^1(I_1; L^{\infty}(\Omega))} \Norm{\dpt \uht}{L^{\infty}(I_1; L^2(\Omega))}^2.
\end{alignat*}
Taking~$\theta$ large enough and~$\gamma$ small enough (independently of $h$, $\tau$, and $\delta$) leads to
\begin{alignat}{3}
\nonumber
& \frac{1}{2}  \Norm{\dpt \uht(\cdot, t_1^-)}{L^2(\Omega)}^2 + \frac{c^2}{2} \Norm{\nabla \uht}{L^2(\Sigma_1)^d}^2 + \Norm{\dpt \uht}{L^2(\SO)}^2
\\
\nonumber
& + \frac12 \Norm{\dpt \uht}{L^{\infty}(I_1; L^2(\Omega))}^2  + \frac{c^2}{2} \Norm{\nabla \uht}{L^{\infty}(I_1; L^2(\Omega)^d)}^2 + \overline{\delta} \Norm{\nabla \dpt \uht}{L^2(Q_1)}^2 \\
\nonumber
& \quad \lesssim \Big(\Norm{f}{L^1(I_1; L^2(\Omega))} +  \Norm{\overline{\tau}^{-1} \xi}{L^1(I_1; L^2(\Omega))} \Big) \Norm{\dpt \uht}{L^{\infty}(I_1; L^2(\Omega))}\\
\nonumber
& \qquad + |k| \Norm{\dpt \alphaht}{L^1(I_1; L^{\infty}(\Omega))} \Norm{\dpt \uht}{L^{\infty}(I_1; L^2(\Omega))}^2\\
\nonumber
& \qquad + \Norm{(1 + k u_0) u_1}{L^2(\Omega)}^2 + c^2 \Norm{\nabla u_0}{L^2(\Omega)^d}^2 + \overline{\delta} \Norm{\nabla \dpt \mu}{L^2(Q_1)^d}^2,
\end{alignat}
where the hidden constant is independent of the time interval~$I_1$.

\paragraph{(Case~$n > 1$).}
The rest of the proof consists in proving that, for~$n = 2, \ldots, N$, it holds
\begin{equation}
\label{eq:case-m+1}
\begin{split}
& \frac{1}{2} \Norm{\dpt \uht(\cdot, t_{n}^-)}{L^2(\Omega)}^2 + \frac{c^2}{2} \Norm{\nabla \uht}{L^2(\Sigma_{n})^d}^2 + \frac12 \Norm{\jump{\dpt \uht}_{n - 1}}{L^2(\Omega)}^2
\\
& \quad + \frac12 \Norm{\dpt \uht}{L^{\infty}(I_{n}; L^2(\Omega))}^2 + \frac{c^2}{2} \Norm{\nabla \uht}{L^{\infty}(I_{n}; L^2(\Omega)^d)}^2 + \overline{\delta} \Norm{\nabla \dpt \uht}{L^2(Q_{n})}^2 \\
& \quad \lesssim \big(\Norm{f}{L^1(0, \tn; L^2(\Omega))} + \Norm{\overline{\tau}^{-1} \xi}{L^1(0, \tn; L^2(\Omega))} \big) \Norm{\dpt \uht}{L^{\infty}(0, \tn; L^2(\Omega))} \\
& \qquad + |k| \Norm{\dpt \alphaht}{L^1(0, \tn; L^{\infty}(\Omega))} \Norm{\dpt \uht}{L^{\infty}(0, \tn; L^2(\Omega))}^2\\
& \qquad + \Norm{(1 + k u_0) u_1}{L^2(\Omega)}^2 + c^2 \Norm{\nabla u_0}{L^2(\Omega)^d}^2 + \overline{\delta} \Norm{\nabla \dpt \mu}{L^2(0, \tn; L^2(\Omega)^d)}^2  ,
\end{split}
\end{equation}
with a hidden constant independent of the time interval~$\In$.

We choose in~\eqref{eq:linearized-space-time-formulation} as test function~$(\wht^{\bullet, n}, \zh^{\bullet, n})$. Since~$\zh^{\bullet, n} = 0$, with an abuse of notation, we omit it below. We obtain the identity
\begin{equation*}
\Bhtt(\uht, \wht^{\bullet, n}) = {\mathcal{L}_{\delta}}(\wht^{\bullet, n}),
\end{equation*}
where
\begin{alignat*}{2}
\nonumber
\Bhtt(\uht, \wht^{\bullet, n}) & = \big(\dpt((1 + k \alphaht) \dpt \uht), \Pi_{q-1}^t(\varphi_{n}  \dpt 
\uht)\big)_{\Qn}
\\
\nonumber
& \quad - \big((1 + k \alphaht(\cdot, \tnmo)) \jump{\dpt \uht}_{n - 1}, 
\Pi_{q-1}^t(\varphi_{n} \dpt \uht)(\cdot, \tnmo^+) \big)_{\Omega}  \\
\nonumber
& \quad + c^2 \big(\nabla \uht, \nabla \Pi_{q-1}^t(\varphi_{n} \dpt \uht)\big)_{\Qn}
\\
\nonumber
& \quad + \delta \big(\nabla \dpt \uht, \nabla \Pi_{q-1}^t(\varphi_{n} \dpt \uht) 
\big)_{\Qn}\\
& =: L_1^{(n)} + L_2^{(n)} + L_3^{(n)} + L_5^{(n)},
\end{alignat*}
and
\begin{alignat*}{3}
\nonumber
{\mathcal{L}_{\delta}}(\wht^{\bullet, n})
& = \big(f, \Pi_{q-1}^t (\varphi_{n} \dpt \uht) \big)_{\Qn} + \delta (\nabla \dpt \mu, \nabla \Pi_{q-1}^t(\varphi_{n} \dpt \uht) )_{\Qn} \\
\nonumber
& \quad + \big(\xi, \dpt \Pi_{q-1}^t (\varphi_{n} \dpt \uht) \big)_{\Qn} \\
& =: R_1^{(n)} + R_3^{(n)} + R_4^{(n)}.
\end{alignat*}
Most of the terms can be bounded as in the case~$n = 1$, so we just focus on the slight differences.

\paragraph{Bound {on}~$L_1^{(n)} + L_2^{(n)}$.} By splitting~$\Pi_{q-1}^t(\varphi \dpt \uht)$ as~$\varphi \dpt \uht - (\Id - \Pi_{q-1}^t)(\varphi \dpt \uht)$, we obtain
\begin{alignat}{3}
\nonumber
L_1^{(n)} + L_2^{(n)}
\nonumber
& = \big(\dpt((1 + k \alphaht) \dpt \uht), \varphi_{n} \dpt \uht \big)_{\Qn}
\\
\nonumber
& \quad - \theta \big((1 + k \alphaht(\cdot, \tnmo)) \jump{\dpt \uht}_{n-1}, 
\dpt \uht(\cdot, \tnmo^+) \big)_{\Omega} \\
\nonumber
& \quad - \big(\dpt((1 + k \alphaht) \dpt \uht), (\Id - \Pi_{q-1}^t)(\varphi_{n} \dpt \uht) \big)_{\Qn} 
\\
\label{eq:aux-identity-m-split-new}
& \quad + \big((1 + k \alphaht(\cdot, \tnmo)) \jump{\dpt \uht}_{n-1}, (\Id 
- \Pi_{q-1}^t) (\varphi_{n} \dpt \uht) (\cdot, \tnmo^+) \big)_{\Omega}.
\end{alignat}
Since~$\varphi_{n}(\tnmo) = \theta$ and~$\varphi_{n}(\tn) = \theta - \zeta_q$, proceeding as for the case~$n = 1$, and using the identity
\begin{equation*}
\frac12 w(\cdot, \tnmo^+)^2 + \jump{w}_{n-1} w(\cdot, \tnmo^+) = -\frac12 \jump{w}_{n-1}^2 + \frac12 w(\cdot, \tnmo^-)^2,
\end{equation*}
we have
\begin{alignat*}{3}
& \big(\dpt((1 + k \alphaht) \dpt \uht), \varphi_{n} \dpt \uht \big)_{\Qn} 
\\
\nonumber
& \quad - \theta \big((1 + k \alphaht(\cdot, \tnmo)) \jump{\dpt \uht}_{n-1}, \dpt \uht(\cdot, \tnmo^+) \big)_{\Omega} \\
& = \frac12 \int_{\Qn} \dpt((1 + k \alphaht) \varphi_{n}  (\dpt \uht)^2) \dV + \frac{\lambda_{n}}{2} \Norm{(1 + k \alphaht)^{\frac12} \dpt \uht}{L^2(\Qn)}^2 \\
& \quad + \frac{k}{2} \int_{\Qn} \varphi_{n} \dpt \alphaht (\dpt \uht)^2 \dV - \theta \big((1 + k \alphaht(\cdot, \tnmo)) \jump{\dpt \uht}_{n-1}, \dpt \uht(\cdot, \tnmo^+) \big)_{\Omega} \\
& = \frac{(\theta - \zeta_q)}{2} \Norm{(1 + k \alphaht(\cdot, \tn))^{\frac12} \dpt \uht(\cdot, \tn^-)}{L^2(\Omega)}^2 \\
& \quad - \theta \int_{\Omega} (1 + k \alphaht(\cdot, \tnmo)) \Big(\frac12 (\dpt \uht(\cdot, \tnmo^+))^2 + \jump{\dpt \uht}_{n-1} \dpt \uht(\cdot, \tnmo^+) \Big) \dx \\
& \quad + \frac{\lambda_{n}}{2} \int_{\Qn} (1 + k \alphaht) (\dpt \uht)^2 \dV + \frac{k}{2} \int_{\Qn} \varphi_{n} \dpt \alphaht (\dpt \uht)^2 \dV \\
& \geq \frac{(\theta - \zeta_q) (1 - |k| \la)}{2} \Norm{\dpt \uht(\cdot, \tn^-)}{L^2(\Omega)}^2 + \frac{(1 - |k|\la) \lambda_{n}}{2}  \Norm{\dpt \uht}{L^2(\Qn)}^2  \\
& \quad + \frac{\theta(1 - |k| \la) }{2} \Norm{\jump{\dpt \uht}_{n-1}}{L^2(\Omega)}^2 - \frac{\theta(1 + |k| \ua)}{2}\Norm{\dpt \uht(\cdot, \tnmo^-)}{L^2(\Omega)}^2  \\
& \quad - \frac{|k| \theta}{2} \Norm{\dpt \alphaht}{L^{1}(\In; L^2(\Omega))} \Norm{\dpt \uht}{L^{\infty}(\In; L^2(\Omega))}^2.
\end{alignat*}
The third term on the right-hand side of~\eqref{eq:aux-identity-m-split-new} can be bounded as for the case~$n = 1$ as follows:
\begin{alignat*}{3}
- \big( &\dpt((1 + k \alphaht) \dpt \uht), (\Id - \Pi_{q-1}^t)(\varphi_{n} \dpt \uht) \big)_{\Qn} \\
& \geq -\zeta_q |k| (1 + \Cinv) \Norm{\dpt \alphaht}{L^1(\In; L^{\infty}(\Omega))} (1 + \Cp \Cinv ) \Norm{\dpt \uht}{L^{\infty}(\In; L^2(\Omega))}^2.
\end{alignat*}
Similarly, the following bound {on} the fourth term on the right-hand side of~\eqref{eq:aux-identity-m-split-new} can be obtained:
\begin{alignat*}{3}
\big((1 + k \alphaht(\cdot, \tnmo)) & \jump{\dpt \uht}_{n-1}, (\Id 
- \Pi_{q-1}^t) (\varphi_{n} \dpt \uht) (\cdot, \tnmo^+) \big)_{\Omega} \\
& \geq -\frac{(1 - |k| \la) \theta}{4} \Norm{\jump{\dpt \uht}_{n-1}}{L^2(\Omega)}^2  - \frac{\zeta_q}{4 \theta} \Big(\frac{1 + |k|\ua}{1 - |k| \la}\Big) \Norm{\dpt \uht}{L^{\infty}(\In; L^2(\Omega))}^2. 
\end{alignat*}

\paragraph{Bound {on}~$L_3^{(n)}$.} Adding and subtracting suitable terms, we have
\begin{alignat*}{3}
\nonumber
L_3^{(n)} & = c^2\big(\nabla \uht, \nabla \Pi_{q - 1}^t (\varphi_{n} \dpt \uht) \big)_{\Qn} \\
\nonumber
& = c^2\big(\nabla \uht, \varphi_{n} \nabla \dpt \uht \big)_{\Qn}
- c^2\big(\nabla \uht, (\Id - \Pi_{q-1}^t) (\varphi_{n} \nabla \dpt \uht) \big)_{\Qn} \\
& \geq c^2\big(\nabla \uht, \varphi_{n} \nabla \dpt \uht \big)_{\Qn},
\end{alignat*}
as the last term in the second line is negative by the same reasoning used in~\eqref{eq:term-M2} for~$L_3^{(b)}$ above. Then, proceeding as for the case~$n = 1$, we have
\begin{alignat*}{3}
L_3^{(n)} & \geq \frac{c^2}{2} (\theta - \zeta_q) \Norm{\nabla \uht}{L^2(\Sigma_{n})^d}^2 - \frac{c^2 \theta}{2} \Norm{\nabla \uht}{L^2(\Sigma_{n-1})^d}^2 + \frac{c^2 \lambda_{n} }{2} \Norm{\nabla \uht}{L^2(\Qn)^d}^2.
\end{alignat*}
The remaining terms can be treated as in the case~$n = 1$ as follows:
\begin{alignat*}{3}
L_5^{(n)} & \geq \delta (\theta - \zeta_q) \Norm{\nabla \dpt \uht}{L^2(\Qn)^d}^2, \\
R_1^{(n)} & \le \CS \theta \Norm{f}{L^1(\In; L^2(\Omega))} \Norm{\dpt \uht}{L^{\infty}(\In; L^2(\Omega))}, \\
R_3^{(n)} & \le \frac{\overline{\delta} \theta^2}{2(\theta - \zeta_q)} \Norm{\nabla \dpt \mu}{L^2(\Qn)^d}^2 + \frac{\delta (\theta - \zeta_q)}{2} \Norm{\nabla \dpt \uht}{L^2(\Qn)^d}^2, \\
R_4^{(n)} & \le \Cinv \CS \theta \Norm{\overline{\tau}^{-1} \xi}{L^1(\In; L^2(\Omega))} \Norm{\dpt \uht}{L^{\infty}(\In; L^2(\Omega))}.
\end{alignat*}

\paragraph{Conclusion of case~$n > 1$.} Collecting the estimates for the case~$n > 1$ and using~\eqref{eq:Linfty-L2-dt-uht}, we have
\begin{alignat*}{3}
& (\theta - \zeta_q) \Big(\frac{(1 - |k| \la)}{2} \Norm{\dpt \uht(\cdot, \tn^{-})}{L^2(\Omega)}^2 + \frac{c^2}{2} \Norm{\nabla \uht}{L^2(\Sigma_{n})^d}^2\Big) + \frac{\theta (1 - |k| \la)}{4} \Norm{\jump{\dpt \uht}_{n-1}}{L^2(\Omega)}^2 \\
& + \frac{\zeta_q}{2}\bigg(\frac{(1 - |k| \la) }{(1 + \Cinv)^2}   - \frac{1}{2 \theta} \Big(\frac{1 + |k| \ua}{1 - |k| \la}\Big)\bigg) \Norm{\dpt \uht}{L^{\infty}(I_1; L^2(\Omega))}^2  + \frac{\zeta_q c^2}{2(1 + \Cinv)^2} \Norm{\nabla \uht}{L^{\infty}(\In; L^2(\Omega)^d)}^2 \\
& + \frac{\delta (\theta - \zeta_q)}{2} \Norm{\nabla \dpt \uht}{L^2(\Qn)^d}^2 \\
& \qquad \qquad \le \frac{\theta(1 + |k| \ua)}{2} \Norm{\dpt \uht(\cdot, \tnmo^-)}{L^2(\Omega)}^2 + \frac{c^2 \theta}{2} \Norm{\nabla \uht}{L^2(\Sigma_{n-1})^d}^2 \\
& \qquad \qquad \quad + \CS \theta \Norm{f}{L^1(\In; L^2(\Omega))} \Norm{\dpt \uht}{L^{\infty}(0, \tn; L^2(\Omega))}\\
& \qquad \qquad \quad + \frac{\overline{\delta} \theta^2}{2(\theta - \zeta_q)} \Norm{\nabla \dpt \mu}{L^2(\Qn)^d}^2 + \Cinv \CS \theta  \Norm{\overline{\tau}^{-1} \xi}{L^1(\In; L^2(\Omega))} \Norm{\dpt \uht}{L^{\infty}(0, \tn; L^2(\Omega))} \\
& \qquad \qquad \quad + \Big(\frac{\theta}{2} + \zeta_q  (1 + \Cinv) (1 + \Cp \Cinv) \Big) |k| \Norm{\dpt \alphaht}{L^1(\In; L^{\infty}(\Omega))} \Norm{\dpt \uht}{L^{\infty}(0, \tn; L^2(\Omega))}^2.
\end{alignat*}
The same parameter~$\theta$ used for the case~$n = 1$ leads to
\begin{alignat}{3}
\nonumber
\frac{1}{2} & \Norm{\dpt \uht(\cdot, \tn^-)}{L^2(\Omega)}^2 + \frac{c^2}{2} \Norm{\nabla \uht}{L^2(\Sigma_{n})^d}^2 + \frac12 \Norm{\jump{\dpt \uht}_{n-1}}{L^2(\Omega)}^2 + \frac{c^2}{2} \Norm{\nabla \uht}{L^2(\Sigma_{n-1})^d}^2 \\
\nonumber
& + \frac12 \Norm{\dpt \uht}{L^{\infty}(\In; L^2(\Omega))}^2 + \frac{c^2}{2} \Norm{\nabla \uht}{L^{\infty}(\In; L^2(\Omega)^d)}^2 + \overline{\delta} \Norm{\nabla \dpt \uht}{L^2(\Qn)}^2 \\
\nonumber
& \lesssim \frac{1}{2} \Norm{\dpt \uht(\cdot, \tnmo^-)}{L^2(\Omega)}^2 + \frac{c^2}{2} \Norm{\nabla \uht}{L^2(\Sigma_{n-1})^d}^2 \\
\nonumber
& \quad + \big(\Norm{f}{L^1(\In; L^2(\Omega))}  + \Norm{\overline{\tau}^{-1} \xi}{L^1(\In; L^2(\Omega))} \big) \Norm{\dpt \uht}{L^{\infty}(0, \tn; L^2(\Omega))}\\
\label{eq:aux-stab-m}
& \quad + \frac{|k|\theta}{2} \Norm{\dpt \alphaht}{L^1(\In; L^2(\Omega))} \Norm{\dpt \uht}{L^{\infty}(0, \tn; L^2(\Omega))}^2 + \overline{\delta} \Norm{\nabla \dpt \mu}{L^2(\Qn)^d}^2,
\end{alignat}
with a hidden constant independent of the time interval~$\In$. 

By combining~\eqref{eq:aux-stab-m} with the estimate in Proposition~\ref{prop:weak-continuous-dependence} to bound $\Norm{\dpt \uht(\cdot, \tnmo^-)}{L^2(\Omega)}^2$, and using the additive properties of the~$L^1$- and~$L^2$-norms, we obtain~\eqref{eq:case-m+1}.

\paragraph{Conclusion of the proof.} For each~$n \in \{1, \ldots, N\}$, summing~\eqref{eq:case-m+1} and the estimate in Proposition~\ref{prop:weak-continuous-dependence} for~$n = N$, and neglecting some positive terms, we arrive at
\begin{equation}
\label{eq:stab-global}
\begin{split}
& \frac12 \Norm{\dpt \uht}{L^{\infty}(\In; L^2(\Omega))}^2 + \frac{c^2}{2} \Norm{\nabla \uht}{L^{\infty}(\In; L^2(\Omega)^d)}^2   + \SemiNorm{\dpt \uht}{\sf J}^2 \\
& \quad + \frac{c^2}{2} \big(\Norm{\nabla \uht}{L^2(\ST)^d}^2 + \Norm{\nabla \uht}{L^2(\SO)^d}^2 \big)  + \overline{\delta} \Norm{\nabla \dpt \uht}{L^2(\Qn)^d}^2 \\
& \qquad \lesssim \big(\Norm{f}{L^1(0, T; L^2(\Omega))} + \Norm{\overline{\tau}^{-1} \xi}{L^1(0, T; L^2(\Omega))} \big) \Norm{\dpt \uht}{L^{\infty}(0, T; L^2(\Omega))} \\
& \qquad\quad  + \Ca \Norm{\dpt \uht}{L^{\infty}(0, T; L^2(\Omega))}^2 + \Norm{(1 + k u_0) u_1}{L^2(\Omega)}^2 + c^2 \Norm{\nabla u_0}{L^2(\Omega)^d}^2 + \overline{\delta} \Norm{\nabla \dpt \mu}{L^2(\QT)^d}^2.
\end{split}
\end{equation}
Taking~$n$ as the index where the left-hand side of~\eqref{eq:stab-global} takes its maximum value and~$\Ca$ small enough, we obtain the desired result~\eqref{eq:strong-continuous-dependence}.
\end{proof}

\section{{Convergence analysis
of the linearized problem~\label{sect:a-priori-linearized}}}
In this section, we derive~\emph{a priori} bounds for the discrete error $\Piht u - \uht$ in the energy norm, where~$\Pi_{h\tau}$ is the combined projection introduced in Section~\ref{Sec: Space-time projection} below, and then estimate $u-\uht$ in the norms
\begin{equation}
\label{eq:reduced-energy-norm} 
\Norm{\dpt v}{L^{\infty}(0, T; L^2(\Omega))} \quad \text{ and } \quad \Norm{\nabla v}{L^{\infty}(0, T; L^2(\Omega)^d)}.
\end{equation}

Henceforth, we further assume that the family of spatial meshes~$\{\Th\}_{h>0}$ is quasi-uniform, i.e., there exists a positive constant~$C_{\mathrm{qu}}$ independent of~$h$ such that~$\diam(K) \le C_{\mathrm{qu}} \diam(K')$ for all~$K, K' \in \Th$. This is required to obtain optimal convergence rates for the error of the Ritz projection~$\Rh$ in the~$L^2(\Omega)$ norm (see~Lemmas~\ref{lemma:estimates-Rh} and~\ref{lemma:stab-Rh}). \purple{In addition, we also need} the following assumption on the \purple{spatial} domain~$\Omega$.
\begin{assumption}[Elliptic regularity for~$\Omega$]
\label{asm:elliptic-regularity}
The spatial domain~$\Omega$ is such that
\begin{equation*}
\text{ if } z \in H_0^1(\Omega) \text{ and } \Delta z \in L^2(\Omega), \text{ then } z \in H^2(\Omega).
\end{equation*}
\end{assumption}

Furthermore, we assume that the solution~$u$ to the Westervelt IBVP belongs to the following space for all $\delta \in [0, \overline{\delta}]$: 
\begin{equation}
\begin{aligned}
\label{eq:regularity-Westervelt}
\Ulm :=  W_1^m(0, T; H^2(\Omega)) &\cap W_{\infty}^{m + 1}(0, T; H^1(\Omega)) \cap W_1^2 (0, T; H^{\ell + 1}(\Omega)) \\ &\cap C^1([0, T]; H^{\ell + 1}(\Omega) \cap H_0^1(\Omega))  \cap C^2([0,T]; \Linf),
\end{aligned}
\end{equation}
for some~${2} \le m \le q$ and~$1 \le \ell \le p$, where~$p$ and~$q$ are the degrees of approximation in space and time, respectively. 
In addition, we assume that the initial data have (at least) the following regularity:
\begin{equation}
\label{eq:regularity-initial-data}
u_0 \in H^{\ell + 1}(\Omega) \cap H_0^1(\Omega) \quad \text{ and } \quad u_1 \in H^{\ell + 1}(\Omega)\cap H_0^1(\Omega).
\end{equation}
A~$\delta$-uniform well-posedness result for the Westervelt IBVP can be found in~\cite[Thm.\ 4.1]{kaltenbacher2022parabolic}. For sufficiently smooth and small initial data (and possibly small final time), it has been shown that the problem has a solution 
\begin{equation*}
\begin{aligned}
u \in \,L^\infty(0,T; \Honethree) \cap W^{1}_{\infty}(0,T; H^2(\Omega)\cap H_0^1(\Omega) ) 
\cap H^2(0,T; H^1(\Omega)),
\end{aligned}
\end{equation*}
where $\Honethree=\, \left\{u\in H^3(\Omega)\,:\, \mbox{tr}_{\partial\Omega} u = 0, \  \mbox{tr}_{\partial\Omega} \Delta u = 0\right\}.$ The analysis in~\cite{kaltenbacher2022parabolic} is geared toward minimal regularity assumptions on data; we expect, however, that the techniques in~\cite{kaltenbacher2022parabolic} can be extended to allow showing uniform well-posedness in $\Ulm$.

Next theorem is the main result in this section and is proven in Section~\ref{subsect:a-priori-estimates-linearized} below.
\begin{theorem}
[\emph{A priori} estimates for the linearized problem]\label{thm:linear-error-bound}
Assume that~$2 \le m \le q$ and~$1 \le \ell \le p$,
and that the spatial domain~$\Omega$ satisfies Assumption~\ref{asm:elliptic-regularity}. Assume further that~$\Th$ is quasi-uniform, $\delta \in [0, \overline{\delta}]$ for some fixed~$\overline{\delta} > 0$,  and that Assumption~\ref{asm:nondegeneracy} holds. Let~$\uht$ be the solution to the linearized problem~\eqref{eq:linearized-space-time-formulation} with~$\mu = 0$, $\xi = 0$, and~$\alphaht(\cdot, 0) = \Rh u_0$.
If bound~\eqref{eq:bound-alpha-W-1-infty} holds with~$\Ca$ such that estimate~\eqref{eq:strong-continuous-dependence} holds, the solution~$u$ to the Westervelt IBVP 
belongs to $\Ulm$, and~$u_0, u_1$ satisfy~\eqref{eq:regularity-initial-data}, then the following estimates hold:
\begin{alignat}{3}
\nonumber
\Norm{\dpt \eu}{L^{\infty}(0, T; L^2(\Omega))} & \leq \Clinone \Bigl\{ {|k|\Norm{\dpt (u - \alphaht)}{L^{\infty}(0, T; L^2(\Omega))} \Norm{\dpt u}{L^1(0, T; L^{\infty}(\Omega))} }\\
\nonumber
& \qquad \quad + {|k| \Norm{u - \alphaht}{C^0([0, T]; L^2(\Omega))} \Norm{\dptt u}{L^1(0, T; L^{\infty}(\Omega))} }
\\
\nonumber
& \qquad \quad + \tau^m \Big(\Norm{\Delta {\dpt^{(m)} u}}{L^1(0, T; L^2(\Omega))} {+ \Norm{{\dpt^{(m + 1)} u}}{L^{\infty}(0, T; L^2(\Omega))}}  \\
\nonumber
& \qquad \qquad\qquad  + {|k|} \Norm{\nabla {\dpt^{(m + 1)} u}}{L^{\infty}(0, T; L^2(\Omega)^d)} + {\overline{\delta}} \Norm{\nabla {\dpt^{(m + 1)} u}}{L^2(0, T; L^2(\Omega)^d)}\Big) \\
\nonumber
& \qquad \quad + h^{\ell + 1} \Big(|k| \Norm{\dpt u}{L^1(0, T; H^{\ell + 1}(\Omega))} + {(1 + |k| \ua)} \Norm{\dptt u}{L^1(0, T; H^{\ell + 1}(\Omega))} \\
\nonumber
& \qquad \qquad \qquad + {|k| \Norm{u_1}{L^{\infty}(\Omega)}}\Norm{u_0}{H^{\ell + 1 }(\Omega)} +  (1 + |k| \Norm{u_0}{H^2(\Omega)} ) \SemiNorm{u_1}{H^{\ell + 1}(\Omega)} \\
\label{eq:error-dpt}
& \qquad \qquad \qquad + \Norm{\dpt u}{C^0([0, T]; H^{\ell + 1}(\Omega))}  \Big) \Bigr\}, \\
\nonumber
\Norm{\nabla \eu}{L^{\infty}(0, T; L^2(\Omega)^d)} 
& \leq \Clintwo \Bigl\{ {|k|\Norm{\dpt (u - \alphaht)}{L^{\infty}(0, T; L^2(\Omega))} \Norm{\dpt u}{L^1(0, T; L^{\infty}(\Omega))} }\\
\nonumber
& \qquad \quad + {|k| \Norm{u - \alphaht}{C^0([0, T]; L^2(\Omega))} \Norm{\dptt u}{L^1(0, T; L^{\infty}(\Omega))} } \\
\nonumber
& \qquad \quad + \tau^m \Big(\Norm{\Delta {\dpt^{(m)} u}}{L^1(0, T; L^2(\Omega))} + {(|k| + \tau)}\Norm{\nabla {\dpt^{(m + 1)} u}}{L^{\infty}(0, T; L^2(\Omega)^d)} \\
\nonumber
& \qquad \qquad \qquad  + {\overline{\delta}} \Norm{\nabla {\dpt^{(m + 1)} u}}{L^2(0, T; L^2(\Omega)^d)}\Big) \\
\nonumber
& \qquad \quad + h^{\ell} \Big(\Norm{u}{L^{\infty}(0, T; H^{\ell + 1}(\Omega))} + {|k|} h \Norm{\dpt u}{L^1(0, T; H^{\ell + 1}(\Omega))} \\
\nonumber
& \qquad \qquad \qquad 
+ {(1 + |k| \ua)} h \Norm{\dptt u}{L^1(0, T; H^{\ell + 1}(\Omega))} \\
\label{eq:error-nabla}
& \qquad \qquad \qquad 
+ {|k| h \Norm{u_1}{L^{\infty}(\Omega)}}\SemiNorm{u_0}{H^{\ell + 1 }(\Omega)} +  (1 + |k| \Norm{u_0}{H^2(\Omega)} ) h \SemiNorm{u_1}{H^{\ell + 1}(\Omega)}  \Bigl) \Bigr\}. 
\end{alignat}
\end{theorem}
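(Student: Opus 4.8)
The plan is to follow the classical split of the error into a projection part and a discrete part, and to reduce the bound on the discrete part to the stability estimate of Theorem~\ref{thm:strong-continuous-dependence}. Write $\eu = u - \uht = (u - \Piht u) + \epi$ with $\epi := \Piht u - \uht \in \Vht$, where $\Piht$ is the combined space--time projection of Section~\ref{Sec: Space-time projection}; by construction $\Piht u(\cdot,0) = \Rh u_0$, and since the initial datum in~\eqref{eq:linearized-space-time-formulation} is enforced weakly one has $\uht(\cdot,0) = \Rh u_0$, hence $\epi(\cdot,0) = 0$. First I would derive the \emph{error equation} for $\epi$: testing the first line of~\eqref{eq:Westervelt-IBVP} against an arbitrary $\wht \in \Wht$, integrating over each cylinder $\Qn$, integrating by parts in space only, and using $u \in C^1([0,T];H_0^1(\Omega))$ (so that the time jumps of $u$ and of $\dpt u$ vanish) together with $\dpt u(\cdot,0) = u_1$, yields a Galerkin identity for $u$. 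Subtracting the scheme~\eqref{eq:linearized-space-time-formulation} (with $\mu = 0$, $\xi = 0$) and rearranging, $\epi$ is seen to solve~\eqref{eq:linearized-space-time-formulation} itself, with the data replaced by: the source $f$ replaced by the linearization defect $\dpt\bigl(k(u-\alphaht)\dpt u\bigr)$ plus the interior terms generated by $\Bhtt(\Piht u - u, (\wht,\zh))$; the gradient datum $\nabla u_0$ replaced by $0$ (because $\Rh$ reproduces the elliptic form); the datum $(1+ku_0)u_1$ replaced by $k(u_0 - \Rh u_0)u_1$; the damping perturbation $\mu$ identified with a representative of $u - \Piht u$; and $\xi$ identified with the non-orthogonality remainder of the time projection $\Pt$ against the variable coefficient $1 + k\alphaht$. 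The structural point is that, by the design of $\Piht$, the ``orthogonal'' parts of $\Bhtt(\Piht u - u, \cdot)$ drop out, leaving only contributions that can be slotted into the $f$-, $u_1$-, $\mu$- and $\xi$-channels of $\mathcal{L}_{\delta}$ in~\eqref{eq:ell-delta-def}.

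Next I would invoke Theorem~\ref{thm:strong-continuous-dependence}. Since Assumption~\ref{asm:nondegeneracy} holds and~\eqref{eq:bound-alpha-W-1-infty} is assumed with $\Ca$ small enough for~\eqref{eq:strong-continuous-dependence} to hold, the stability estimate applied to $\epi$ gives
\[
\Tnormd{\epi}^2 \lesssim \|\widetilde{f}\|_{L^1(L^2)}^2 + \|\widetilde{u}_1\|_{L^2(\Omega)}^2 + \overline{\delta}\,\|\nabla\dpt\widetilde{\mu}\|_{L^2(\QT)^d}^2 + \|\overline{\tau}^{-1}\widetilde{\xi}\|_{L^1(L^2)}^2,
\]
and I would bound the right-hand side term by term. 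The linearization defect splits as $k\dpt(u-\alphaht)\dpt u + k(u-\alphaht)\dptt u$ and is controlled in $L^1(L^2)$ by $|k|\Norm{\dpt(u-\alphaht)}{L^{\infty}(L^2)}\Norm{\dpt u}{L^1(L^{\infty})} + |k|\Norm{u-\alphaht}{C^0(L^2)}\Norm{\dptt u}{L^1(L^{\infty})}$, which produces the first two lines of both~\eqref{eq:error-dpt} and~\eqref{eq:error-nabla}. The remaining $\Piht$-projection contributions to $\widetilde{f}$, together with $\widetilde{u}_1 = k(u_0 - \Rh u_0)u_1$ and $\widetilde{\mu} = u - \Piht u$, are estimated with the approximation properties of the Ritz projection $\Rh$ (Lemma~\ref{lemma:estimates-Rh}; the $L^2(\Omega)$-estimate is precisely where the quasi-uniformity of $\Th$ and Assumption~\ref{asm:elliptic-regularity} enter) and of the time projection $\Pt$; using also that the spatial Ritz part of the damping error is $\Vhp$-orthogonal, the damping contribution reduces to a pure time-projection error of $\Rh u$ of order $\tau^m$, which is how the term $\overline{\delta}\,\Norm{\nabla\dpt^{(m+1)}u}{L^2(L^2)}$ appears. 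Altogether these produce the $\tau^m(\cdots)+h^{\ell+1}(\cdots)$ bundle in~\eqref{eq:error-dpt} and the $\tau^m(\cdots)+h^{\ell}(\cdots)$ bundle in~\eqref{eq:error-nabla}.

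For $\widetilde{\xi}$ I would split $1+k\alphaht$ into its temporal cell-average (which preserves the $L^2(\In)$-orthogonality of $\Pi_{q-1}^t$ and of $\Pt$) plus a fluctuation bounded pointwise by $\tau_n\,|k|\,|\dpt\alphaht|$, so that the otherwise dangerous weight $\overline{\tau}^{-1}$ is absorbed and $\|\overline{\tau}^{-1}\widetilde{\xi}\|_{L^1(L^2)}$ is controlled by the same $\tau^m$- and $h^{\ell+1}$-type quantities plus a term carrying the factor $|k|\Norm{\dpt\alphaht}{L^1(L^{\infty})}\le\Ca$, which is absorbed by the hidden constant. All of these bounds are $\delta$-independent since the damping enters only through the fixed constant $\overline{\delta}$. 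Then I would conclude by the triangle inequality: $\Norm{\dpt\eu}{L^{\infty}(L^2)} \le \Norm{\dpt(u-\Piht u)}{L^{\infty}(L^2)} + \Tnormd{\epi}$, with the projection-error term $\lesssim \tau^m\Norm{\dpt^{(m+1)}u}{L^{\infty}(L^2)} + h^{\ell+1}\Norm{\dpt u}{C^0(H^{\ell+1})}$ by the approximation properties of $\Piht$ (using the regularity $W^{m+1}_{\infty}(0,T;H^1(\Omega))\cap C^1([0,T];H^{\ell+1}(\Omega))$ contained in $\Ulm$); combining with the bound on $\Tnormd{\epi}$ yields~\eqref{eq:error-dpt}. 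The estimate~\eqref{eq:error-nabla} is obtained identically, the only difference being that $\nabla(u-\Piht u)$ is merely $O(h^{\ell})$, which explains the $h^{\ell}\Norm{u}{L^{\infty}(H^{\ell+1})}$ leading term and the extra factor $h$ in the lower-order terms there.

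The step I expect to be the main obstacle is the identification and estimation of $\widetilde{\xi}$, that is, the bound on $\|\overline{\tau}^{-1}\widetilde{\xi}\|_{L^1(L^2)}$: because $\Pt$ loses its $L^2(0,T)$-orthogonality against the $t$-dependent coefficient $1+k\alphaht$, a crude estimate of $\widetilde{\xi}$ yields only $O(\tau^{m-1})$ after multiplication by $\overline{\tau}^{-1}$, so one must split off the temporal average of the coefficient and exploit both the fine orthogonality structure of $\Pt$ (Legendre-polynomial computations in the spirit of Lemma~\ref{lemma: weight function}) and the smallness of $|k|\Norm{\dpt\alphaht}{L^1(L^{\infty})}$. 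Keeping the resulting bound in exactly the form displayed in~\eqref{eq:error-dpt}--\eqref{eq:error-nabla}, in particular with the $\alphaht$-dependent terms isolated, is what makes it usable for later closing the fixed-point argument.
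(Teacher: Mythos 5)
Your proposal is correct and follows essentially the same route as the paper: the same decomposition into projection error plus discrete error, the same reinterpretation of the discrete error as a solution of the linearized formulation with the linearization defect, the Ritz/temporal projection remainders, the damping term, and the non-orthogonality remainder slotted into the $f$-, $u_1$-, $\mu$- and $\xi$-channels of $\mathcal{L}_{\delta}$, followed by Theorem~\ref{thm:strong-continuous-dependence} and the triangle inequality with Lemma~\ref{lemma:estimates-Piht}; in particular, your treatment of $\xi$ by splitting off the temporal cell average $\Pi_0^t\alphaht$ so that the $O(\tau_n)$ fluctuation absorbs the $\overline{\tau}^{-1}$ weight is exactly the mechanism used in Proposition~\ref{prop:a-priori-bounds-linearized}. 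The only detail glossed over is that the initial-velocity channel receives two contributions, $k(u_0-\Rh u_0)u_1$ from the linearization defect and $(1+k\Rh u_0)(\Id-\Rh)u_1$ from integrating the $\Rh$-part of the projection error by parts in time, both of which appear in the stated bound.
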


\begin{remark}[Error estimates for the linear damped wave equation with constant coefficients] 
Setting~$k = 0$ and~$\delta > 0$, and assuming that, 
in addition to~\eqref{eq:regularity-Westervelt}, $u$ belongs to~$W_1^{m + 1}(0, T; H^2(\Omega))$, the following error estimates follow from Theorem~\ref{thm:linear-error-bound}: 
\begin{alignat*}{3}
\Norm{\dpt \eu}{L^{\infty}(0, T; L^2(\Omega))} & \lesssim \tau^m \big(\Norm{\Delta {\dpt^{(m)} u}}{L^1(0, T; L^2(\Omega))} + \Norm{{\dpt^{(m + 1)} u}}{L^{\infty}(0, T; L^2(\Omega))} + \delta \Norm{\nabla {\dpt^{(m + 1)} u}}{L^2(0, T; L^2(\Omega)^d)}\big) \\
\nonumber
& \quad + h^{\ell + 1} \big(\Norm{\dptt u}{L^1(0, T; H^{\ell + 1}(\Omega))} + \Norm{\dpt u}{C^0([0, T]; H^{\ell + 1}(\Omega))}  \big), \\
\Norm{\nabla \eu}{L^{\infty}(0, T; L^2(\Omega)^d)} & 
\lesssim \tau^m \big(\tau \Norm{\Delta {\dpt^{(m + 1)} u}}{L^1(0, T; L^2(\Omega))} + \tau \Norm{\nabla {\dpt^{(m + 1)} u}}{L^{\infty}(0, T; L^2(\Omega)^d)} \\
& \qquad\quad  + \delta \Norm{\nabla {\dpt^{(m + 1)} u}}{L^2(0, T; L^2(\Omega)^d)} \big) \\
\nonumber
& \quad + h^{\ell} \big(\Norm{u}{L^{\infty}(0, T; H^{\ell + 1}(\Omega))} 
+ h \Norm{\dptt u}{L^1(0, T; H^{\ell + 1}(\Omega))} \big).
\end{alignat*}
The estimate of~$\dpt \eu$ is optimal, whereas the estimate of~$\nabla \eu$ is optimal with respect to~$h$, and it is suboptimal by one order with respect to~$\tau$
unless the damping parameter is small ($\delta \lesssim \tau$), which is the case we are interested in this work.
\eremk
\end{remark}
\subsection{{Combined space--time projection}} \label{Sec: Space-time projection}
We first recall the definition of the auxiliary projection in~\cite[Def.~5.1]{Walkington:2014}.
\begin{definition}[Projection~$\Pt$]
\label{DEF:Pt}
Let~$q \in \IN$ with~$q \geq 2$. Given a partition~$\Tt$ of the time interval~$(0, T)$, the projection operator~$\Pt : C^1(0, T) + \Vtq \rightarrow \Vtq$ is defined for all~$v \in C^1(0, T) + \Vtq$ as follows:
\begin{subequations}
\label{eq:proj-Pt-def}
\begin{alignat}{3} 
\label{eq:proj-Pt-def-1}
\Pt v(0) & = v(0),\\
(\Pt v)' (\tn^-) & = v'(\tn^-) & & \qquad \text{ for } n = 1, \ldots, N, \\
\big( (\Pt v)' - v', \, p_{q-2}\big)_{\In} & = 0 & & \qquad \text{ for } n = 1, \ldots, N, \ \forall p_{q - 2} \in \Pp{q-2}{\In}.
\end{alignat}
\end{subequations}
\end{definition}

Moreover, we denote by~$\Rh : H_0^1(\Omega) \rightarrow \Vhp$ the Ritz projection, defined for any~$z \in H_0^1(\Omega)$ as the solution to the following variational problem:
\begin{equation*}
(\nabla (\Rh z - z), \nabla \vh)_{\Omega} = 0 \qquad \forall \vh \in \Vhp.
\end{equation*}

In what follows, the projection operators~$\Pt$ and~$\Rh$ are to be understood, respectively, as applied pointwise in space and pointwise in time.
We also denote by~$\Piht$ the operator~$\Pt \circ \Rh$. We next state its approximation properties that will be exploited in the error analysis.

\begin{lemma}[Estimates for~$\Piht$]
\label{lemma:estimates-Piht}
Let~$v$ belong to the space
$$W_{\infty}^{m + 1}(0, T; L^2(\Omega)) \cap C^1([0, T]; H^{\ell + 1} \cap H_0^1(\Omega)) \cap L^{\infty}(0, T; H^{\ell + 1}(\Omega)) \cap W^{m + 1}_{\infty}(0, T; H^1(\Omega)),$$
for~$2 \le m \le q$ and~$1 \le \ell \le p$. Then, the following estimates hold:
\begin{subequations}
\begin{alignat}{3}
\label{eq:interpolation-error-dpt}
\Norm{\dpt (\Id - \Piht) v}{L^{\infty}(0, T; L^2(\Omega))} & \lesssim \tau^m \Norm{{\dpt^{(m + 1)} v}}{L^{\infty}(0, T; L^2(\Omega))} + h^{\ell + 1} \Norm{\dpt v}{C^0([0, T]; H^{\ell + 1}(\Omega))}, \\
\label{eq:interpolation-error-nabla}
\Norm{\nabla (\Id - \Piht) v}{L^{\infty}(0, T; L^2(\Omega)^d)} & \lesssim h^{\ell} \Norm{v}{L^{\infty}(0, T; H^{\ell + 1}(\Omega))} + \tau^{m + 1} \Norm{{\nabla \dpt^{(m + 1)} v}}{L^{\infty}(0, T; L^2(\Omega)^d)}, \\
\nonumber
\sum_{n = 1}^N \big(\tau_n^{-1} \Norm{\dpt(\Piht v - v)}{L^2(\Qn)}^2 & + \tau_n \Norm{\dptt (\Piht v - v)}{L^2(\Qn)}^2 \big)^{\frac12} \\
\label{eq:combined-interpolation-error}
& \lesssim \tau_{\min}^{-\frac12} h^{\ell + 1} \Norm{v}{C^0([0, T]; H^{\ell + 1}(\Omega))} + \tau^{m - \frac12} \Norm{{\dpt^{(m + 1)}v}}{L^2(0, T; L^2(\Omega))}.
\end{alignat}
\end{subequations}
\end{lemma}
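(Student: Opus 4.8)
The plan is to split the combined error $v-\Piht v$ into a purely spatial part, controlled by the Ritz projection $\Rh$, and a purely temporal part, controlled by $\Pt$, and then to invoke the known approximation and stability properties of each: the $\Rh$-estimates recalled in Lemmas~\ref{lemma:estimates-Rh} and~\ref{lemma:stab-Rh} (which use the quasi-uniformity of $\Th$ and the elliptic regularity of $\Omega$ in Assumption~\ref{asm:elliptic-regularity}), and the $\Pt$-estimates following~\cite{Walkington:2014}. Since $\Rh$ is applied pointwise in time and $\Pt$ pointwise in space, we have $\dpt\Rh=\Rh\dpt$ and $\nabla\Pt=\Pt\nabla$, and $\Piht=\Pt\circ\Rh$, so I would work with
\[
v-\Piht v=(\Id-\Rh)v+(\Id-\Pt)(\Rh v),
\]
whence $\dpt(v-\Piht v)=(\Id-\Rh)\dpt v+\dpt(\Id-\Pt)(\Rh v)$ and $\nabla(v-\Piht v)=\nabla(\Id-\Rh)v+(\Id-\Pt)\bigl(\nabla\Rh v\bigr)$. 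The regularity hypothesis on $v$ is exactly what is needed to give a meaning to all the terms on the right-hand sides of these bounds (in particular, $W^{m+1}_{\infty}(0,T;H^1(\Omega))$ is used so that $\nabla\dpt^{(m+1)}v\in L^{\infty}(0,T;L^2(\Omega)^d)$).

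For \eqref{eq:interpolation-error-dpt} I would bound the spatial part pointwise in time by the Aubin--Nitsche estimate $\Norm{(\Id-\Rh)\dpt v(\cdot,t)}{L^2(\Omega)}\lesssim h^{\ell+1}\SemiNorm{\dpt v(\cdot,t)}{H^{\ell+1}(\Omega)}$, and take the supremum over $t$ to get the $h^{\ell+1}\Norm{\dpt v}{C^0([0,T];H^{\ell+1}(\Omega))}$ contribution; for the temporal part I would use the $\Pt$-estimate $\Norm{\dpt(\Id-\Pt)w}{L^{\infty}(0,T;L^2(\Omega))}\lesssim\tau^m\Norm{\dpt^{(m+1)}w}{L^{\infty}(0,T;L^2(\Omega))}$ with $w=\Rh v$, then commute, $\dpt^{(m+1)}\Rh v=\Rh\dpt^{(m+1)}v$, and apply the $L^2$-stability of $\Rh$ on quasi-uniform meshes to land on $\tau^m\Norm{\dpt^{(m+1)}v}{L^{\infty}(0,T;L^2(\Omega))}$. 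Estimate \eqref{eq:interpolation-error-nabla} follows the same pattern, using $\Norm{\nabla(\Id-\Rh)v(\cdot,t)}{L^2(\Omega)^d}\lesssim h^{\ell}\SemiNorm{v(\cdot,t)}{H^{\ell+1}(\Omega)}$ for the spatial part, and $\Norm{(\Id-\Pt)w}{L^{\infty}(0,T;X)}\lesssim\tau^{m+1}\Norm{\dpt^{(m+1)}w}{L^{\infty}(0,T;X)}$ with $X=L^2(\Omega)^d$ and $w=\nabla\Rh v$, combined with $\dpt^{(m+1)}\nabla\Rh v=\nabla\Rh\dpt^{(m+1)}v$ and the $H^1$-stability $\Norm{\nabla\Rh z}{L^2(\Omega)^d}\le\Norm{\nabla z}{L^2(\Omega)^d}$ of the Ritz projection; the extra power of $\tau$ reflects that $\Pt$ reproduces nodal values, not derivatives.

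For the combined bound \eqref{eq:combined-interpolation-error} I would argue interval by interval. On each $\Qn$, re-split as above and use the local $\Pt$-estimates $\Norm{\dpt(\Id-\Pt)w}{L^2(\Qn)}\lesssim\tau_n^{m}\Norm{\dpt^{(m+1)}w}{L^2(\Qn)}$ and $\Norm{\dptt(\Id-\Pt)w}{L^2(\Qn)}\lesssim\tau_n^{m-1}\Norm{\dpt^{(m+1)}w}{L^2(\Qn)}$, so that the two temporal contributions to the bracket both scale like $\tau_n^{2m-1}\Norm{\dpt^{(m+1)}v}{L^2(\Qn)}^2$ --- this is exactly why the weights $\tau_n^{-1}$ and $\tau_n$ are chosen --- together with the local Ritz estimate $\Norm{(\Id-\Rh)z}{L^2(\Omega)}\lesssim h^{\ell+1}\SemiNorm{z}{H^{\ell+1}(\Omega)}$ applied with $z=\dpt v(\cdot,t)$ and $z=\dptt v(\cdot,t)$ and integrated over $\In$. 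Taking square roots, summing over $n$, bounding $\tau_n\le\tau$ and $\tau_n\ge\tau_{\min}$, and using the discrete Cauchy--Schwarz inequality to pass from the $\ell^1$-sums over the intervals to the Bochner norms appearing on the right-hand side then yields the factors $\tau_{\min}^{-\frac12}h^{\ell+1}$ and $\tau^{m-\frac12}$.

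\textbf{The main obstacle is \eqref{eq:combined-interpolation-error}}: the first two estimates reduce, once the $\Rh$- and $\Pt$-lemmas are available, to routine applications of the triangle inequality and commutativity, whereas in the combined estimate one has to keep careful track of how the negative powers of $\tau_n$ --- coming both from the $\tau_n^{-1}$ weight and from differentiating a degree-$q$ polynomial twice --- cancel against the positive powers produced by the spatial and temporal approximation bounds, and to organize the summation over the $N$ time intervals so as to recover precisely the stated $\tau_{\min}^{-\frac12}h^{\ell+1}$ and $\tau^{m-\frac12}$ scalings without picking up spurious factors of $N$ or extra time derivatives of $v$.
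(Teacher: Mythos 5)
Your overall strategy---split $v-\Piht v$ into a Ritz part and a $\Pt$ part, commute the projections with $\dpt$ and $\nabla$, and invoke the component lemmas---is exactly the paper's strategy, and for \eqref{eq:interpolation-error-nabla} your splitting coincides with theirs (the paper also uses $(\Id-\Rh)v+\Rh(\Id-\Pt)v$ together with the $H^1_0$-orthogonality of $\Rh$ and the $L^\infty$-in-time estimate for $\Id-\Pt$). The one substantive divergence is in \eqref{eq:interpolation-error-dpt}: the paper splits the other way, as $(\Id-\Pt)v+\Pt(\Id-\Rh)v$, so that the temporal estimate of Lemma~\ref{lemma:estimates-Pt} is applied directly to $v$ and the Ritz error is handled through the $C^0$-in-time stability of $\Pt$ (Lemma~\ref{lemma:stab-Pt}). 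Your ordering, $(\Id-\Rh)v+(\Id-\Pt)\Rh v$, instead requires you to bound $\Norm{\Rh\,\dpt^{(m+1)}v}{L^2(\Omega)}$ by $\Norm{\dpt^{(m+1)}v}{L^2(\Omega)}$, i.e.\ the genuine $L^2(\Omega)$-stability of the Ritz projection. That is a true fact on quasi-uniform meshes, but it is \emph{not} among the tools the paper provides: Lemma~\ref{lemma:stab-Rh} only gives $\Norm{\Rh z}{L^2(\Omega)}\lesssim\Norm{\nabla z}{L^2(\Omega)^d}$. With the paper's toolbox your route lands on $\tau^m\Norm{\nabla\dpt^{(m+1)}v}{L^\infty(0,T;L^2(\Omega)^d)}$ (or, via $\Norm{\Rh z}{L^2}\le\Norm{z}{L^2}+\Norm{(\Id-\Rh)z}{L^2}$, on the stated term plus $\tau^m h\,\SemiNorm{\dpt^{(m+1)}v}{L^\infty(0,T;H^1(\Omega))}$), which is finite under the assumed regularity but is not literally \eqref{eq:interpolation-error-dpt}. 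Either cite an $L^2$-stability result for $\Rh$ explicitly, or switch to the paper's ordering, which sidesteps the issue.

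On \eqref{eq:combined-interpolation-error} the paper only says the estimate ``follows in an analogous way,'' so there is no official argument to compare against; your interval-by-interval bookkeeping of the powers of $\tau_n$ is the right idea, and your observation that both temporal contributions scale like $\tau_n^{2m-1}$ is correct. Two caveats, though. First, Lemma~\ref{lemma:estimates-Pt} does not directly provide the bound $\Norm{\dptt(\Id-\Pt)w}{L^2(\In)}\lesssim\tau_n^{m-1}\Norm{\dpt^{(m+1)}w}{L^2(\In)}$ you use; you need to manufacture it, e.g.\ by inserting a best polynomial approximation and applying the inverse estimate \eqref{eq:inverse-estimate-time} to the polynomial difference. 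Second, your final Cauchy--Schwarz step for the $\ell^1$-in-$n$ sum written on the left-hand side gives $\sum_n\tau_n^{m-\frac12}\Norm{\dpt^{(m+1)}v}{L^2(\Qn)}\le\bigl(\sum_n\tau_n^{2m-1}\bigr)^{\frac12}\Norm{\dpt^{(m+1)}v}{L^2(\QT)}\lesssim\sqrt{T}\,\tau^{m-1}\Norm{\dpt^{(m+1)}v}{L^2(\QT)}$, i.e.\ half a power of $\tau$ short of the stated $\tau^{m-\frac12}$; the clean $\tau^{m-\frac12}$ rate comes out only for the $\ell^2$-aggregated quantity $\bigl(\sum_n(\cdots)\bigr)^{\frac12}$, which is in fact what is used downstream in \eqref{est L alpha}. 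You should either prove the $\ell^2$-aggregated version or accept the extra $\tau^{-\frac12}$ in the temporal term; as written, your summation step does not deliver the stated scaling. Similarly, applying the Ritz estimate to $\dptt v$ on each slab introduces $\Norm{\dptt v}{}$-type norms that do not appear on the stated right-hand side; bounding $\dptt$ of the discrete-in-time piece by an inverse estimate instead keeps the right-hand side in terms of first derivatives of $v$ only.
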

\begin{proof}
We use the stability and approximation properties of the operators~$\Pt$ and~$\Rh$ from Lemmas~\ref{lemma:stab-Pt}, \ref{lemma:estimates-Pt}, and~\ref{lemma:estimates-Rh} to obtain 
\begin{alignat*}{3}
\nonumber
\Norm{\dpt (\Id - \Piht) v}{L^{\infty}(0, T; L^2(\Omega))} & \le \Norm{\dpt (\Id - \Pt) v}{L^{\infty}(0, T; L^2(\Omega))} + \Norm{\dpt \Pt (\Id - \Rh) v}{L^{\infty}(0, T; L^2(\Omega))} \\
\nonumber
& \lesssim \tau^m \Norm{{\dpt^{(m + 1)}v}}{L^{\infty} (0, T; L^2(\Omega))}+ \Norm{\dpt (\Id - \Rh) v}{C^0([0, T]; L^2(\Omega))} \\
\nonumber
&  \lesssim \tau^m \Norm{{\dpt^{(m + 1)} v}}{L^{\infty}(0, T; L^2(\Omega))} + h^{\ell + 1} \Norm{\dpt v}{C^0([0, T]; H^{\ell + 1}(\Omega))}, \\[2mm]
\nonumber
\Norm{\nabla (\Id - \Piht) v}{L^{\infty}(0, T; L^2(\Omega)^d)} & \le \Norm{\nabla (\Id - \Rh) v}{L^{\infty}(0, T; L^2(\Omega)^d)} + \Norm{\nabla \Rh (\Id - \Pt) v}{L^{\infty}(0, T; L^2(\Omega)^d)} \\
\nonumber
& \lesssim h^{\ell} \Norm{v}{L^{\infty}(0, T; H^{\ell + 1}(\Omega))} + \tau^{m + 1} \Norm{\nabla {\dpt^{(m + 1)} v}}{L^{\infty}(0, T; L^2(\Omega)^d)}.
\end{alignat*}
The estimate~\eqref{eq:combined-interpolation-error} follows in an analogous way.
\end{proof}

\subsection{Bounds on the discrete error in the linearized problem}
Let~$u$ be the solution to the Westervelt IBVP~\eqref{eq:Westervelt-IBVP} and~$\uht$ be the solution of the linearized space--time formulation~\eqref{eq:linearized-space-time-formulation} with~$\mu = 0$, $\xi = 0$, and~$\alphaht(\cdot, 0) = \Rh u_0$. We consider the following error functions:
\begin{equation*}
\eu := u - \uht = \epi - \Piht \eu \quad \text{ and } \quad \epi = u - \Piht u.
\end{equation*}
The next result establishes a bound on $\Piht \eu$ in the $\Tnormd{\cdot}$ norm, which we recall is defined in \eqref{eq:energy-norm-delta}.
\begin{proposition}
[Estimates of the discrete error]
\label{prop:a-priori-bounds-linearized} Let~$\Th$ be quasi-uniform, $\delta \in [0, \overline{\delta}]$ for some fixed~$\overline{\delta} > 0$, $q \in \IN$ with~$q \geq 2$, and let also Assumption~\ref{asm:nondegeneracy} hold. Let $\uht$ be the solution of~\eqref{eq:linearized-space-time-formulation} with~$\mu = 0$, $\xi = 0$, and~$\alphaht(\cdot, 0) = \Rh u_0$.
{If bound~\eqref{eq:bound-alpha-W-1-infty} holds with~$\Ca$ such that estimate~\eqref{eq:strong-continuous-dependence} holds,} the initial conditions~$(u_0, u_1) \in \left(H_0^1(\Omega) \cap \Linf\right)  \times \left(H_0^1(\Omega)\cap \Linf\right)$, the source term~$f \in {L^1(0, T; L^2(\Omega))}$, and the solution~$u$ to the Westervelt IBVP~\eqref{eq:Westervelt-IBVP} belongs to~${W^{2}_{1}(0, T; L^{\infty}(\Omega)) \cap W^1_{\infty}(0, T; L^2(\Omega)) 
\cap C^1(0, T; H^2(\Omega) \cap H_0^1(\Omega))}$, then the following estimate holds: 
\begin{equation}
\label{eq:projected-error-bound}
\begin{split}
\Tnormd{\Piht \eu} & \lesssim {|k|}\Norm{\dpt (u - \alphaht)}{L^{\infty}(0, T; L^2(\Omega))} \Norm{\dpt u}{L^1(0, T; L^{\infty}(\Omega))} \\
& \quad + {|k|} \Norm{u - \alphaht}{C^0([0, T]; L^2(\Omega))} \Norm{\dptt u}{L^1(0, T; L^{\infty}(\Omega))} \\
& \quad + {|k|} \Norm{(\Id - \Rh) \dpt u}{L^1(0, T; L^2(\Omega))} + {(1 + |k| \ua)} \Norm{(\Id - \Rh) \dptt u}{L^1(0, T; L^2(\Omega))} \\
& \quad + \Norm{(\Id - \Pt) \Delta u}{L^1(0, T; L^2(\Omega))} + {|k| \Norm{u_1}{L^{\infty}(\Omega)}} \Norm{(\Id - \Rh) u_0}{L^2(\Omega)} \\
& \quad + (1 + |k| \Norm{\Rh u_0}{\Linf}) \Norm{(\Id - \Rh) u_1}{\Ltwo} \\
& \quad + |k| \sum_{n = 1}^N \tau_n^{-1} \Norm{(\Id - \Pi_0^t) \alphaht}{L^{1}(I_n; L^{\infty}(\Omega))} \Norm{\dpt (\Id - \Pt) {\Rh} u}{L^{\infty}(\In; L^2(\Omega))} \\
& \quad + \overline{\delta} \Norm{\dpt (\Id - \Pt) \nabla u}{L^2(\QT)^d},
\end{split}
\end{equation}
where the hidden constant is independent of~$h$, $\tau$, and~$\delta$.
\end{proposition}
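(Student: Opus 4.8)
The plan is to prove that $\Piht\eu := \Piht u - \uht$ is \emph{itself} the solution of the linearized formulation~\eqref{eq:linearized-space-time-formulation} for an explicit, small right-hand side, and then to apply the continuous-dependence bound of Theorem~\ref{thm:strong-continuous-dependence}. First, since $\Pt$ is the identity on $\Vtq$ and $\Rh$ the identity on $\Vhp$, we have $\Piht\uht = \uht$, hence $\Piht\eu \in \Vht$ is an admissible argument of $\Bhtt$. As $\uht$ solves~\eqref{eq:linearized-space-time-formulation} with $\mu=0$ and $\xi=0$ (so that $\mathcal{L}_\delta$ collapses to the load $\mathcal{L}$ of Section~\ref{sec:fully-discrete}), linearity of $\Bhtt(\cdot,(\wht,\zh))$ yields $\Bhtt(\Piht\eu,(\wht,\zh)) = \Bhtt(\Piht u,(\wht,\zh)) - \mathcal{L}(\wht,\zh)$ for all $(\wht,\zh)\in\Wht\times\Vhp$.

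\textbf{Consistency computation.} I would compute this residual by writing $\Piht u = \Rh u + e_P$ with $e_P := (\Pt-\Id)\Rh u$, and by inserting the Westervelt PDE tested against $\wht$, followed by spatial integration by parts using $\wht(\cdot,t),\zh\in H_0^1(\Omega)$. For the $\Rh u$-contribution I substitute $\dpt\Rh u = \dpt u - (\Id-\Rh)\dpt u$ \emph{before} expanding $\dpt((1+k\alphaht)\dpt\Rh u)$, so that the \emph{exact} coefficient $1+ku$ (not $1+k\alphaht$) multiplies the Ritz error of $u$; the defining orthogonality $(\nabla(\Id-\Rh)v,\nabla\vh)_\Omega=0$, $\vh\in\Vhp$, then annihilates the $c^2$-stiffness, $\delta$-damping and $\zh$-terms of every $\Rh$-projected quantity, leaving the bulk terms $-k(\dpt u\,(\Id-\Rh)\dpt u,\wht)_{\QT}$, $-((1+ku)(\Id-\Rh)\dptt u,\wht)_{\QT}$, $k(\dpt(\alphaht-u)\,\Rh\dpt u,\wht)_{\QT}$, $k((\alphaht-u)\,\Rh\dptt u,\wht)_{\QT}$, together with the $t=0$ surface term $-((1+k\Rh u_0)(\Id-\Rh)u_1 + k u_1(\Id-\Rh)u_0,\wht(\cdot,0))_\Omega$ (after rewriting $(1+k\Rh u_0)\Rh u_1-(1+ku_0)u_1$ in terms of projection errors). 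For the $e_P$-contribution, the properties of $\Pt$ in Definition~\ref{DEF:Pt} give $\dpt e_P(\cdot,\tn^-)=0$ for every $n$ and $e_P(\cdot,0)=0$; integrating the leading term by parts in time on each $\Qn$, all interface and $\SO$-boundary terms then telescope against the time-jump and $\SO$ terms of $\Bhtt(e_P,\cdot)$, so that the whole first-order-in-time block of $\Bhtt(e_P,\cdot)$ collapses to $-\sum_{n=1}^N((1+k\alphaht)\dpt e_P,\dpt\wht)_{\Qn}$. Splitting $1+k\alphaht = \Pi_0^t(1+k\alphaht) + k(\Id-\Pi_0^t)\alphaht$ and observing that $\Pi_0^t(1+k\alphaht)\dpt\wht$ is, on each $\In$, piecewise constant in time times $\dpt\wht$ and hence lies in $\Pp{q-2}{\In}$ as a function of time, which is $L^2(\In)$-orthogonal to $\dpt e_P = ((\Pt-\Id)\Rh u)'$ by the last identity in~\eqref{eq:proj-Pt-def}, this block reduces to $-k\sum_{n=1}^N((\Id-\Pi_0^t)\alphaht\,\dpt e_P,\dpt\wht)_{\Qn}$. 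Finally, commuting $\Pt,\Rh,\nabla,\Delta$ and using Ritz orthogonality once more, the stiffness and damping terms of $e_P$ become $c^2((\Id-\Pt)\Delta u,\wht)_{\QT}$ and $-\delta(\nabla\dpt(\Id-\Pt)u,\nabla\wht)_{\QT}$.

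\textbf{Conclusion via Theorem~\ref{thm:strong-continuous-dependence}.} Collecting the above shows that $\Piht\eu$ solves~\eqref{eq:linearized-space-time-formulation} with $\alphaht(\cdot,0)=\Rh u_0$ and data $\tilde u_0=0$ (indeed $\Piht u(\cdot,0)=\Rh u_0=\uht(\cdot,0)$), $\tilde u_1 = -(1+k\Rh u_0)(\Id-\Rh)u_1 - k u_1(\Id-\Rh)u_0$, $\tilde\mu = -(\Id-\Pt)u$, $\tilde\xi = -k(\Id-\Pi_0^t)\alphaht\,((\Pt-\Id)\Rh u)'$, and $\tilde f = -k\dpt u\,(\Id-\Rh)\dpt u - (1+ku)(\Id-\Rh)\dptt u + k\dpt(\alphaht-u)\,\Rh\dpt u + k(\alphaht-u)\,\Rh\dptt u + c^2(\Id-\Pt)\Delta u$. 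Since~\eqref{eq:bound-alpha-W-1-infty} holds with $\Ca$ small enough for~\eqref{eq:strong-continuous-dependence}, Theorem~\ref{thm:strong-continuous-dependence} applies to $\Piht\eu$; taking square roots of the resulting estimate and bounding each datum by Hölder's and the Cauchy--Schwarz inequalities in space--time — using the embeddings from the assumed regularity of $u$ to control $\Norm{u}{L^\infty(0,T;\Linf)}$ and $\Norm{\dpt u}{L^\infty(0,T;\Linf)}$, the $\Linf$-stability of $\Rh$ on quasi-uniform meshes (to replace $\Rh\dpt u,\Rh\dptt u$ by $\dpt u,\dptt u$), and $\Norm{\cdot}{L^1(\In;L^2(\Omega))}\le\tau_n\Norm{\cdot}{L^\infty(\In;L^2(\Omega))}$ for the $\tilde\xi$-term — reproduces exactly the right-hand side of~\eqref{eq:projected-error-bound}.

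\textbf{Main obstacle.} The crux is the consistency step, and in particular the correct routing of every residual term into one of the four data slots $\tilde f\in L^1(0,T;L^2(\Omega))$, $\tilde u_1$, $\tilde\mu$, $\tilde\xi$ of $\mathcal{L}_\delta$ so that Theorem~\ref{thm:strong-continuous-dependence} is applicable. Two choices are delicate: (i) anchoring the decomposition at $\Rh u$ and inserting the PDE before expanding $\dpt((1+k\alphaht)\dpt\Rh u)$, which is what keeps the only surviving $\dpt\alphaht$-contributions paired with the factor $u-\alphaht$ (had one anchored at $u$ instead, an uncontrollable term $k\dpt\alphaht\,(\Id-\Rh)\dpt u$ would appear, since $\dpt\alphaht$ is only bounded in $L^1(0,T;\Linf)$); and (ii) treating the leading first-order-in-time block of the $\Pt$-interpolation error $e_P$, whose jump/boundary telescoping hinges on the exact endpoint properties of $\Pt$, and whose $\alphaht$-dependent part must be pushed into $\tilde\xi$ through the piecewise-constant splitting of $1+k\alphaht$ combined with $((\Pt-\Id)\Rh u)'\perp_{L^2(\In)}\Pp{q-2}{\In}$ — which is precisely the role of $\xi$ in~\eqref{eq:ell-delta-def}.
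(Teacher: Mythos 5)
Your overall strategy coincides with the paper's: both identify $\Piht\eu$ as the solution of the linearized formulation~\eqref{eq:linearized-space-time-formulation} for explicit perturbation data $(\tilde f,\tilde u_1,\tilde\mu,\tilde\xi)$ and then invoke Theorem~\ref{thm:strong-continuous-dependence}. Your decomposition $\Piht u=\Rh u+(\Pt-\Id)\Rh u$ differs from the paper's splittings of $\epi$ (into $(\Id-\Pt)u+\Pt(\Id-\Rh)u$ for the mass/jump/initial terms and $(\Id-\Rh)u+\Rh(\Id-\Pt)u$ for the stiffness and damping), but the three key mechanisms are identical and you execute them correctly: the telescoping of the first-order-in-time block via $\dpt(\Pt v-v)(\cdot,\tn^-)=0$ and $e_P(\cdot,0)=0$; the annihilation of the piecewise-constant part of $1+k\alphaht$ against $\dpt e_P\perp_{L^2(\In)}\Pp{q-2}{\In}$, which is precisely what populates $\xi$; and Ritz orthogonality turning the stiffness and damping residuals into $(\Id-\Pt)\Delta u$ and $\nabla\dpt(\Id-\Pt)u$. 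Your single block $(\Pt-\Id)\Rh u$ even lands directly on the $\Norm{\dpt(\Id-\Pt)\Rh u}{L^{\infty}(\In;L^2(\Omega))}$ term appearing in~\eqref{eq:projected-error-bound}.

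The one step that does not go through as written is the treatment of the linearization error. Anchoring at $\Rh u$ produces the terms $k\,\dpt(\alphaht-u)\,\Rh\dpt u$ and $k\,(\alphaht-u)\,\Rh\dptt u$, so to recover~\eqref{eq:projected-error-bound} you must control $\Norm{\Rh\dpt u}{L^1(0,T;\Linf)}$ and $\Norm{\Rh\dptt u}{L^1(0,T;\Linf)}$. You appeal to ``$\Linf$-stability of $\Rh$ on quasi-uniform meshes'', but the only stability the paper provides is Lemma~\ref{lemma:stab-Rh}, $\Norm{\Rh z}{\Linf}\lesssim\Norm{z}{H^2(\Omega)}$; genuine $L^\infty\!\to\!L^\infty$ stability of the Ritz projection is a deep Schatz--Wahlbin-type result (with possible logarithmic factors) and is not among the paper's tools. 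Lemma~\ref{lemma:stab-Rh} covers $\Rh\dpt u$ because $\dpt u\in C^0(0,T;H^2(\Omega))$ is assumed, but it fails for $\Rh\dptt u$, since the hypotheses only give $\dptt u\in L^1(0,T;\Linf)$, not $L^1(0,T;H^2(\Omega))$. The paper sidesteps this entirely by never projecting the linearization error: the inconsistency is kept as $-k\,\dpt((u-\alphaht)\dpt u)=-k\,\dpt(u-\alphaht)\,\dpt u-k\,(u-\alphaht)\,\dptt u$ with the exact $\dpt u$, $\dptt u$, which is exactly what the first two lines of~\eqref{eq:projected-error-bound} reflect. Your argument is repaired by one further split, $k(\alphaht-u)\Rh\dptt u=k(\alphaht-u)\dptt u-k(\alphaht-u)(\Id-\Rh)\dptt u$, absorbing the second piece into the $(\Id-\Rh)\dptt u$ bucket using the uniform bounds on $\alphaht$ and $u$ in $L^{\infty}(\QT)$. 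Incidentally, your parenthetical claim that anchoring at $u$ would create an ``uncontrollable'' term $k\dpt\alphaht\,(\Id-\Rh)\dpt u$ is not right: the paper's own $f$ contains precisely this term (arising from the $\Pt(\Id-\Rh)u$ block), and it is controlled through the smallness assumption~\eqref{eq:bound-alpha-W-1-infty}.
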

\begin{proof}
Since the space--time variational formulation~\eqref{eq:linearized-space-time-formulation} is consistent except for the linearized term, and~$\uht(\cdot, 0) = \alphaht(\cdot, 0) = \Rh u_0$, the following error equation holds:
\begin{alignat*}{3}
\Bhtt(\eu, (\wht, \zh)) & = - \sum_{n = 1}^N k (\dpt((u - \alphaht) \dpt u), \wht)_{\Qn} \\
& \qquad - k \big( (u_0 - \Rh u_0) u_1, \wht(\cdot, 0)  \big)_{\Omega} 
& & \qquad \forall (\wht, \zh) \in \Vht \times \Vhp,
\end{alignat*}
whence
\begin{alignat}{3}
\nonumber
\Bhtt(\Piht \eu, (\wht, \zh)) & = \Bhtt(\epi, (\wht, \zh)) \\
\nonumber
& \quad + \sum_{n = 1}^N k (\dpt((u - \alphaht) \dpt u), \wht)_{\Qn} \\
\label{eq:linearized-problem-projected-error}
& \quad + k \big( (u_0 - \Rh u_0) u_1, \wht(\cdot, 0) \big)_{\Omega}  \qquad \forall (\wht, \zh) \in \Vht \times \Vhp.
\end{alignat}
Due {to} the choice of the discrete initial condition~$\uht(\cdot, 0) = \Rh u_0$ (see Remark~\ref{rem:discrete-initial-condition}), the above identity is independent of~$\zh$.
Therefore, with an abuse of notation, we omit the test function~$\zh$ in the subsequent steps.

We wish to represent~$\Piht \eu$ as a solution to a linearized space--time formulation of the form of~\eqref{eq:linearized-space-time-formulation} with a suitable choice of the functions $f$, $\mu$, and $\xi$. To this end, we further rewrite~$\Bhtt(\epi, \wht)$ in~\eqref{eq:linearized-problem-projected-error}. By the definition of the bilinear form~$\Bhtt(\cdot, \cdot)$ in~\eqref{eq:Bhtt-def}, we have
\begin{alignat*}{3}
\nonumber
\Bhtt(\epi, \wht) & = \sum_{n = 1}^N \big(\dpt((1 + k \alphaht) \dpt \epi), \wht\big)_{\Qn} \\
\nonumber
& \quad - \sum_{n = 1}^{N  - 1} \big((1 + k \alphaht(\cdot, \tn)) \jump{\dpt \epi}_n, \wht(\cdot, \tn^+) \big)_{\Omega} + \big((1 + k \alphaht) \dpt \epi, \wht \big)_{\SO}\\
\nonumber
& \quad + c^2 \big(\nabla \epi, \nabla \wht\big)_{\QT} + \delta \big(\nabla \dpt \epi, \nabla \wht \big)_{\QT} \\
& =: J_1 + J_2 + J_3 + J_4 + J_5.
\end{alignat*}
We now use the properties of the projection~$\Piht$ to simplify each term~$\{J_i\}_{i = 1}^6$.

By splitting~$\epi$ as~$(u - \Pt) u + \Pt(u - \Rh u)$, we have
\begin{alignat*}{3}
\nonumber
J_1 + J_2 + J_3 & = \sum_{n = 1}^N \big(\dpt((1 + k \alphaht) \dpt \epi), \wht\big)_{\Qn}  - \sum_{n = 1}^{N  - 1} \big((1 + k \alphaht(\cdot, \tn)) \jump{\dpt \epi}_n, \wht(\cdot, \tn^+) \big)_{\Omega} \\
\nonumber
& \quad + \big((1 + k \alphaht) \dpt \epi, \wht \big)_{\SO} \\
\nonumber
& = \sum_{n = 1}^N \big(\dpt((1 + k \alphaht) \dpt (\Id - \Pt) u, \wht\big)_{\Qn} \\
\nonumber
& \quad - \sum_{n = 1}^{N  - 1} \big((1 + k \alphaht(\cdot, \tn)) \jump{\dpt (\Id - \Pt) u}_n, \wht(\cdot, \tn^+) \big)_{\Omega} \\
\nonumber
& \quad + \big((1 + k \alphaht) \dpt (\Id - \Pt) u, \wht \big)_{\SO} \\
\nonumber
& \quad + \sum_{n = 1}^N \big(\dpt((1 + k \alphaht) \dpt \Pt (\Id - \Rh) u, \wht\big)_{\Qn} \\
\nonumber
& \quad - \sum_{n = 1}^{N  - 1} \big((1 + k \alphaht(\cdot, \tn)) \jump{\dpt \Pt (\Id - \Rh) u}_n, \wht(\cdot, \tn^+) \big)_{\Omega} \\
\nonumber
& \quad + \big((1 + k \alphaht) \dpt \Pt (\Id - \Rh) u, \wht \big)_{\SO} \\
& =: J_{1,\tau} + J_{2, \tau} + J_{3, \tau} + J_{1, h} + J_{2, h} + J_{3, h}.
\end{alignat*}
Integrating by parts in time, and using the definition in~\eqref{eq:proj-Pt-def} of the projection~$\Pt$ and the identity
\begin{equation*}
v(\cdot, \tn^+) \jump{u}_n  + u(\cdot, \tn^{-}) \jump{v}_n = \jump{uv}_n \quad \text{ for } n = 1, \ldots, N - 1,
\end{equation*}
we deduce
\begin{alignat}{3}
\nonumber
J_{1, \tau} + J_{2, \tau} + J_{3, \tau} & = \sum_{n = 1}^N \big(\dpt((1 + k \alphaht) \dpt (\Id - \Pt) u, \wht\big)_{\Qn} \\
\nonumber
& \quad - \sum_{n = 1}^{N  - 1} \big((1 + k \alphaht(\cdot, \tn)) \jump{\dpt (\Id - \Pt) u}_n, \wht(\cdot, \tn^+) \big)_{\Omega} \\
\nonumber
& \quad + \big((1 + k \alphaht) \dpt (\Id - \Pt) u, \wht \big)_{\SO} \\
\nonumber
& = - \sum_{n = 1}^N \big((1 + k \alphaht) \dpt (\Id - \Pt) u, \dpt \wht \big)_{\Qn} + \big((1 + k \alphaht \dpt (\Id - \Pt) u, \wht \big)_{\ST} \\
\nonumber
& \quad + \sum_{n = 1}^{N - 1}  \int_{\Omega} (1 + k \alphaht(\cdot, \tn)) \big(\jump{\wht \dpt (\Id - \Pt) u}_n - \wht(\cdot, \tn^+) \jump{\dpt \uht}_n \big) \dx \\
\nonumber
& = - \sum_{n = 1}^{N} \big((1 + k \alphaht) \dpt (\Id - \Pt) u, \dpt \wht \big)_{\Qn} + \big((1 + k \alphaht) \dpt (\Id - \Pt) u, \wht \big)_{\ST} \\
\nonumber
& \quad + \sum_{n = 1}^{N - 1} \big( (1 + k \alphaht(\cdot, \tn)) \dpt (\Id - \Pt) u(\cdot, \tn^-), \jump{\wht}_n \big)_{\Omega} \\
\label{eq:identity-J1t-J2t-J3t}
& = -\sum_{n = 1}^N \big(k (\alphaht - \Pi_0^t \alphaht) \dpt(\Id - \Pt) u, \dpt \wht \big)_{\Qn},
\end{alignat}
where, in the last step, we have used the fact that~$(\Pi_0^t \alphaht \dpt \wht)(\bx, \cdot)  \in \Pp{q-2}{\Tt}$ for all~$\bx \in \Omega$.

Integrating by parts in time and using similar arguments as above, we get
\begin{alignat*}{3}
\nonumber
J_{1, h} + J_{2, h} + J_{3, h} & = \sum_{n = 1}^N \big(\dpt((1 + k \alphaht) \dpt \Pt (\Id - \Rh) u, \wht\big)_{\Qn} \\
\nonumber
& \quad - \sum_{n = 1}^{N  - 1} \big((1 + k \alphaht(\cdot, \tn)) \jump{\dpt \Pt (\Id - \Rh) u}_n, \wht(\cdot, \tn^+) \big)_{\Omega} \\
\nonumber
& \quad + \big((1 + k \alphaht) \dpt \Pt (\Id - \Rh) u, \wht \big)_{\SO} \\
\nonumber
& = - \sum_{n = 1}^N \big( ( 1 + k \alphaht) \dpt \Pt (\Id - \Rh) u, \dpt \wht \big)_{\Qn} \\
\nonumber 
& \quad + \big((1 + k \alphaht) \dpt \Pt (\Id - \Rh) u, \wht \big)_{\ST} \\
\nonumber
& \quad + \sum_{n = 1}^{N - 1} \big((1 + k \alphaht(\cdot, \tn)) \dpt \Pt(\Id - \Rh) u(\cdot, \tn^-), \jump{\wht}_n \big)_{\Omega}.
\end{alignat*}
Moreover, using the definition of~$\Pt$ and integrating by parts back in time, we obtain
\begin{alignat}{3}
\nonumber
J_{1, h} + J_{2, h} + J_{3, h}
& = - \sum_{n = 1}^N \big( ( 1 + k \alphaht) \dpt  (\Id - \Rh) u, \dpt \wht \big)_{\Qn} + \big((1 + k \alphaht) \dpt  (\Id - \Rh) u, \wht \big)_{\ST} \\
\nonumber
& \quad + \sum_{n = 1}^{N - 1} \big((1 + k \alphaht(\cdot, \tn)) \dpt (\Id - \Rh) u(\cdot, \tn^-), \jump{\wht}_n \big)_{\Omega} \\
\nonumber
&\quad + \sum_{n = 1}^N \big(k \alphaht \dpt (\Id - \Pt)(\Id - \Rh) u, \dpt \wht \big)_{\Qn}
\\
\nonumber
& = \sum_{n = 1}^N \Big[\big(k \dpt \alphaht (\Id - \Rh) \dpt u, \wht \big)_{\Qn} + \big( (1 + k \alphaht) (\Id - \Rh) \dptt u, \wht \big)_{\Qn} \\
\nonumber
& \quad + \big(k \alphaht \dpt (\Id - \Pt)(\Id - \Rh) u, \dpt \wht \big)_{\Qn}\Big] \\
\label{eq:identity-J1h-J2h-J3h}
& \quad {+ \big( (1 + k \alphaht(\cdot, 0)) (\Id - \Rh) \dpt u (\cdot, 0), \wht \big)_{\Omega}}.
\end{alignat}
Finally, since~$\alphaht(\cdot, 0) = \Rh u_0$ and~$\dpt u(\cdot, 0) = u_1$, the last term in the above equation is equal to
\begin{equation*}
\big( (1 + k \Rh u_0) (\Id - \Rh) u_1, \wht(\cdot, 0) \big)_{\Omega}.
\end{equation*}

As for the term~$J_4$, we split~$\epi$ as~$(\Id - \Rh)u + \Rh(\Id - \Pt) u$, and use the orthogonality properties of~$\Rh$, the commutativity of~$\Pt$ and the spatial gradient operator~$\nabla$, and integration by parts in space 
to obtain
\begin{alignat}{3}
\nonumber
J_4 & = c^2(\nabla \epi, \nabla \wht)_{\QT} = c^2 (\nabla (\Id - \Rh) u, \nabla \wht)_{\QT} + c^2(\nabla \Rh (\Id - \Pt) u, \nabla \wht)_{\QT} \\
\label{eq:identity-J4}
& \quad = c^2\big((\Id - \Pt) \nabla u, \nabla \wht \big)_{\QT} = -c^2\big( (\Id - \Pt) \Delta u, \wht\big)_{\QT}.
\end{alignat}
Similarly, we also have
\begin{alignat}{3}
\nonumber
J_5 & = \delta \big(\nabla \dpt \epi, \nabla \wht \big)_{\QT} = \delta \big( \nabla (\Id - \Rh) \dpt u, \nabla \wht)_{\QT} + \delta \big(\nabla \Rh \dpt (\Id - \Pt) u, \nabla \wht \big)_{\QT} \\
\label{eq:identity-J5}
& = \delta \big(\nabla \dpt(\Id - \Pt) u, \nabla \wht \big)_{\QT}.
\end{alignat}

From~\eqref{eq:linearized-problem-projected-error} and the simplified expressions of the terms~$\{J_i\}_{i = 1}^5$ in~\eqref{eq:identity-J1t-J2t-J3t},  \eqref{eq:identity-J1h-J2h-J3h}, \eqref{eq:identity-J4}, and~\eqref{eq:identity-J5}, we deduce that~$\Piht \eu$ solves a linearized space--time formulation of the form of~\eqref{eq:linearized-space-time-formulation} with
\begin{alignat*}{3}
f & = k \dpt \alphaht (\Id - \Rh) \dpt u + (1 + k \alphaht)(\Id - \Rh) \dptt u - c^2 (\Id - \Pt) \Delta u, \\
\xi & = -k(\Id - \Pi_0^t) \alphaht \dpt (\Id - \Pt) u - k (\Id - \Pi_0^t) \alphaht \dpt (\Id - \Pt) \Rh u, \\
\mu & = (\Id - \Pt) u.
\end{alignat*}
Therefore, the continuous dependence on the data in Theorem~\ref{thm:strong-continuous-dependence} and the H\"older inequality yield the desired result. 
\end{proof}

\subsection{\emph{A priori} error estimates for the linearized problem \label{subsect:a-priori-estimates-linearized}}
We are now in a position to prove Theorem~\ref{thm:linear-error-bound}.
\begin{proof}[Proof of Theorem~\ref{thm:linear-error-bound}]
We first further estimate the discrete error by bounding the right-hand side terms in~\eqref{eq:projected-error-bound}. By using 
the approximation properties of~$\Pi_0^t$, $\Pt$, and~$\Rh$ from Lemmas~\ref{lemma:polynomial-approx-time}, \ref{lemma:estimates-Pt}, \ref{lemma:estimates-Rh}, and \ref{lemma:stab-Rh}, we obtain
\begin{alignat*}{3}
{|k|} \Norm{(\Id - \Rh) \dpt u}{L^1(0, T; L^2(\Omega))} & \lesssim {|k|} h^{\ell + 1} \Norm{\dpt u}{L^1(0, T; H^{\ell + 1}(\Omega))}, \\
{(1 + |k| \ua)}\Norm{(\Id - \Rh) \dptt u}{L^1(0, T; L^2(\Omega))} & \lesssim {(1 + |k| \ua)} h^{\ell + 1} \Norm{\dptt u}{L^1(0, T; 
H^{\ell + 1}(\Omega))}, \\
\Norm{(\Id - \Pt) \Delta u}{L^1(0, T; L^2(\Omega))} & \lesssim \tau^{m} \Norm{\Delta {\dpt^{(m)} u}}{L^1(0, T; L^2(\Omega))}, \\
{|k| \Norm{u_1}{L^{\infty}(\Omega)}} \Norm{(\Id - \Rh) u_0}{L^2(\Omega)} & \lesssim h^{\ell + 1} {|k| \Norm{u_1}{L^{\infty}(\Omega)}} \SemiNorm{u_0}{H^{\ell + 1}(\Omega)}, \\
(1 + |k|\, \Norm{\Rh u_0}{\Linf}) \Norm{(\Id - \Rh) u_1}{\Ltwo} & \lesssim (1 + |k| \Norm{u_0}{H^2(\Omega)} ) h^{\ell + 1} \SemiNorm{u_1}{H^{\ell + 1}(\Omega)}. \end{alignat*}
Furthermore,
\begin{alignat*}{3}
|k| \sum_{n = 1}^N \tau_n^{-1} \Norm{(\Id - \Pi_0^t) \alphaht}{L^{1}(I_n; L^{\infty}(\Omega))} & \Norm{\dpt (\Id - \Pt)\Rh u}{L^{\infty}(\In; L^2(\Omega))}\\ \vspace*{-2mm}
& \le |k| \Cp \Norm{\dpt (\Id - \Pt) \Rh u}{L^{\infty}(0, T; L^2(\Omega))} \sum_{n = 1}^N \Norm{\dpt \alphaht}{L^1(I_n; L^{\infty}(\Omega))} \\
& \lesssim |k| \Norm{\dpt \alphaht}{L^1(0, T; L^{\infty}(\Omega))} \Norm{\dpt (\Id - \Pt) {\nabla} u}{L^{\infty}(0, T; L^2(\Omega)^d)}\\
& \lesssim \tau^m {|k|} \Norm{\nabla {\dpt^{(m + 1)} u}}{L^{\infty}(0, T; L^2(\Omega)^d)},\\
\overline{\delta} \Norm{\dpt(\Id - \Pt) \nabla u}{L^2(\QT)} & \lesssim \overline{\delta} \tau^m \Norm{\nabla {\dpt^{(m + 1)} u}}{L^2(0, T; L^2(\Omega)^d)}.
\end{alignat*}
Therefore, the following estimate of the projected error is obtained:
\begin{alignat}{3}
\nonumber
\Tnormd{\Piht \eu} & \lesssim 
{|k|\Norm{\dpt (u - \alphaht)}{L^{\infty}(0, T; L^2(\Omega))} \Norm{\dpt u}{L^1(0, T; L^{\infty}(\Omega))} }\\
\nonumber
& \quad + {|k| \Norm{u - \alphaht}{C^0([0, T]; L^2(\Omega))} \Norm{\dptt u}{L^1(0, T; L^{\infty}(\Omega))} } \\
\nonumber
& \quad + \tau^m \Big(\Norm{\Delta {\dpt^{(m)} u}}{L^1(0, T; L^2(\Omega))} + |k| \Norm{\nabla {\dpt^{(m + 1)}u} }{L^{\infty}(0, T; L^2(\Omega)^d)} + {\overline{\delta}} \Norm{\nabla {\dpt^{(m + 1)} u}}{L^2(0, T; L^2(\Omega)^d)}\Big) \\
\nonumber
& \quad + h^{\ell + 1} \Big({|k|} \Norm{\dpt u}{L^1(0, T; H^{\ell + 1}(\Omega))} + {(1 + |k| \ua)} \Norm{\dptt u}{L^1(0, T; H^{\ell + 1}(\Omega))} \\
\label{eq:estimate-projected-error}
& \qquad \qquad + {|k| \Norm{u_1}{L^{\infty}(\Omega)}} \SemiNorm{u_0}{H^{\ell + 1 }(\Omega)}+  (1 + |k| \Norm{u_0}{H^2(\Omega)} ) h \SemiNorm{u_1}{H^{\ell + 1}(\Omega)} 
 \Big).
\end{alignat}
Estimates~\eqref{eq:error-dpt} and~\eqref{eq:error-nabla} then follow using the triangle inequality, and combining estimates~\eqref{eq:estimate-projected-error} and Lemma~\ref{lemma:estimates-Piht}.
\end{proof}

Due to condition~\eqref{eq:proj-Pt-def-1} in the definition of~$\Pt$, the following bound can be used to derive an error estimate of~$\eu$:
\begin{equation*}
\Norm{\eu}{C^{0}([0, T]; L^2(\Omega))} \le \Norm{(\Id - \Rh) u_0}{L^2(\Omega)} + T \Norm{\dpt \eu}{L^{\infty}(0, T; L^2(\Omega))}.
\end{equation*}
We use this estimate in Section~\ref{Sec: error analysis nonlinear} below; cf.~\eqref{est alpht}.

\section{\emph{A priori} analysis of the nonlinear problem} \label{Sec: error analysis nonlinear}

In this section, we prove the well-posedness and \emph{a priori} bounds for the nonlinear problem \eqref{eq:space-time-formulation} by relying on the results of Sections~\ref{sec:stab-linearized-problem} and~\ref{sect:a-priori-linearized}. To streamline the presentation, we introduce the short-hand notation for (semi)norms involved in Theorem~\ref{thm:linear-error-bound}: 
\begin{equation} \label{m ell norms}
	\begin{aligned}
		\Norm{u}{m} := &\,\begin{multlined}[t]\Norm{\Delta {\dpt^{(m)} u}}{L^1(0, T; L^2(\Omega))} + \Norm{{\dpt^{(m + 1)} u}}{L^{\infty}(0, T; L^2(\Omega))} +|k|\Norm{\nabla {\dpt^{(m + 1)} u}}{L^{\infty}(0, T; L^2(\Omega)^d)}\\+ \overline{\delta} \Norm{\nabla {\dpt^{(m + 1)} u}}{L^2(0, T; L^2(\Omega)^d)}, \end{multlined} \\
		\Norm{u}{m, \tau} := &\, \Norm{\Delta {\dpt^{(m)} u}}{L^1(0, T; L^2(\Omega))} + {(|k| + \tau)}\Norm{\nabla {\dpt^{(m + 1)} u}}{L^{\infty}(0, T; L^2(\Omega)^d)}   + \overline{\delta} \Norm{\nabla {\dpt^{(m + 1)} u}}{L^2(0, T; L^2(\Omega)^d)}, \\
		\Norm{u}{\ell} := &\, \begin{multlined}[t] |k| \Norm{\dpt u}{L^1(0, T; H^{\ell + 1}(\Omega))} + (1 + |k| \ua) \Norm{\dptt u}{L^1(0, T; H^{\ell + 1}(\Omega))}  +|k| \Norm{u_1}{L^{\infty}(\Omega)}\Norm{u_0}{H^{\ell + 1 }(\Omega)}\\ + \Norm{\dpt u}{C^0([0, T]; H^{\ell + 1}(\Omega))}+  (1 + |k| \Norm{u_0}{H^2(\Omega)} ) h \SemiNorm{u_1}{H^{\ell + 1}(\Omega)},
        \end{multlined}\\
		\Norm{u}{\ell,h} := &\,\begin{multlined}[t] \Norm{u}{L^{\infty}(0, T; H^{\ell + 1}(\Omega))} + |k| h \Norm{\dpt u}{L^1(0, T; H^{\ell + 1}(\Omega))} 
+ {(1 + |k| \ua)} h \Norm{\dptt u}{L^1(0, T; H^{\ell + 1}(\Omega))} \\
+ {|k| h \Norm{u_1}{L^{\infty}(\Omega)}}\SemiNorm{u_0}{H^{\ell + 1 }(\Omega)}+  (1 + |k| \Norm{u_0}{H^2(\Omega)} ) h \SemiNorm{u_1}{H^{\ell + 1}(\Omega)}.	
		\end{multlined}
	\end{aligned}
\end{equation}
Recall that the variable coefficient~$\alphaht$ in the previous sections can be seen as a place holder for a fixed-point iterate. We next set up this fixed-point mapping to connect the linear results from Section~\ref{sect:a-priori-linearized} to the nonlinear problem \eqref{eq:space-time-formulation}.
\paragraph{The fixed-point operator.} We define the mapping $\mapping:  \ball \ni \alphaht \mapsto \uht$, where $\alphaht$ is taken from
\begin{equation*}
	\begin{aligned}
		\ball =\, \Bigl\{ \alphaht \in \Vht:\
		& \Norm{\dpt (u-\alphaht) }{L^{\infty}(0, T; L^2(\Omega))} 
		\leq \Cballone \left( \tau^m \Norm{u}{m}
		+ h^{\ell + 1} \Norm{u}{\ell}\right), \\
		& \Norm{\nabla (u-\alphaht)}{L^{\infty}(0, T; L^2(\Omega)^d)} 
		\leq \Cballtwo \left(\tau^m \|u\|_{m,\tau}+ h^{\ell} \|u\|_{\ell,h}\right),\\[1mm]
		& \alphaht(\cdot, 0) = \Rh u_0
        \Bigr\},
	\end{aligned}
\end{equation*}
and $\uht$ solves the corresponding linearized problem
\[
\Bhtt(\uht, (\wht, \zh))  = \ell(\wht, \zh) \qquad \forall {(\wht, \zh) \in \Wht \times \Vhp}.
\]
The constants $\Cballone$ and $\Cballtwo$ will be chosen
large enough by the upcoming proofs, but independent
of the discretization parameters and $\delta$.

The analysis of the nonlinear problem will follow by Banach's fixed-point theorem through two steps, where we first prove that $\calT$ is a well-defined self-mapping and then that it is strictly contractive.

\subsection{The self-mapping property} 
In the first step, we focus on proving the self-mapping property.
\begin{proposition}\label{prop: selfmapping}
Let the assumptions of Theorem~\ref{thm:linear-error-bound} hold, and let $\tau \lesssim h^{\frac{d}{2m}}$, $2 \le m \le q$, and~$1 \le \ell \le p$. There exist
	large enough $\Cballone>0$ and $\Cballtwo>0$ (independent of $h$, $\tau$, and $\delta$) in the definition of $\ball$ and small enough $\Cwellpone$ (relative to $\Cballone$), where
	\begin{equation} \label{assumpt selfmapping}
		\begin{aligned}
		\begin{multlined}[t]
        |k|\left(\|u\|_{m}+\Norm{u}{\ell} + \Norm{u}{W^1_{\infty}(0, T; H^{\ell + 1}(\Omega))} + \Norm{\dpt u}{L^1(0, T; L^{\infty}(\Omega))}\right.  \left.+\Norm{\dptt u}{L^1(0, T; L^{\infty}(\Omega))}  \right)\leq \Cwellpone,
        \end{multlined}
		\end{aligned}
	\end{equation}
	such that $\ball$ is non-empty, the mapping $\calT$ is well defined, and $\calT(\ball) \subset \ball$.
\end{proposition}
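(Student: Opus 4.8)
The plan is to establish the three claims — $\ball\neq\varnothing$, that $\calT$ is well defined on $\ball$, and that $\calT(\ball)\subset\ball$ — in that order, fixing the constants hierarchically: the nondegeneracy parameters $\la,\ua$ (hence $\Ca$ from Theorem~\ref{thm:strong-continuous-dependence} and $\Clinone,\Clintwo$ from Theorem~\ref{thm:linear-error-bound}) are given; then $\Cballone,\Cballtwo$ are taken large, depending on $\Clinone,\Clintwo$, on $u$, and on the constants of the inverse, trace, Sobolev and Poincar\'e inequalities, but not on $h,\tau,\delta$; finally $\Cwellpone$ is taken small relative to $\Cballone$ (and $\Cballtwo$), which is what turns~\eqref{assumpt selfmapping} into a genuine smallness condition on the data. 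For non-emptiness I would check that $\Piht u\in\ball$: the interpolation constraint holds because $\Pt$ preserves values at $t=0$, so $\Piht u(\cdot,0)=(\Pt\Rh u)(\cdot,0)=\Rh u_0$; and the two norm bounds follow from Lemma~\ref{lemma:estimates-Piht}, since the right-hand sides of~\eqref{eq:interpolation-error-dpt} and~\eqref{eq:interpolation-error-nabla} are, by the definitions~\eqref{m ell norms}, bounded by $\tau^m\Norm{u}{m}+h^{\ell+1}\Norm{u}{\ell}$ and $\tau^m\Norm{u}{m,\tau}+h^{\ell}\Norm{u}{\ell,h}$ respectively, so $\Cballone,\Cballtwo$ large enough place $\Piht u$ in $\ball$.

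\textbf{Well-definedness (the delicate step).} Fix $\alphaht\in\ball$. By Remark~\ref{rem:well-posedness-linearized-problem}, the linearized problem with coefficient $1+k\alphaht$ has a unique solution $\uht=\calT(\alphaht)\in\Vht$ as soon as Assumption~\ref{asm:nondegeneracy} and~\eqref{eq:bound-alpha-W-1-infty} hold for $\alphaht$ with a $\Ca$ for which~\eqref{eq:strong-continuous-dependence} is valid; these are what must be verified, the remaining hypotheses of Theorem~\ref{thm:linear-error-bound} being part of the standing assumptions. I would split $\alphaht=\Piht u+(\alphaht-\Piht u)$. For $\Piht u$, $L^\infty$-in-space approximation estimates for $\Rh$ and $\Pt$ together with the embedding $H^{\ell+1}(\Omega)\hookrightarrow L^\infty(\Omega)$ (valid since $\ell\ge1$, $d\le3$) give $|k|\Norm{\Piht u}{L^\infty(\QT)}+|k|\Norm{\dpt\Piht u}{L^1(0,T;L^\infty(\Omega))}\lesssim\Cwellpone$, because the resulting norms of $u$ all appear (up to the factor $|k|$) in~\eqref{assumpt selfmapping}. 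For the \emph{discrete} remainder $\alphaht-\Piht u\in\Vht$, I would apply pointwise in time the spatial inverse inequality $\Norm{v_h}{L^\infty(\Omega)}\lesssim h^{-d/2}\Norm{v_h}{L^2(\Omega)}$ to $\alphaht-\Piht u$ and to $\dpt(\alphaht-\Piht u)$, use $\Norm{\alphaht-\Piht u}{L^\infty(0,T;L^2(\Omega))}\le\Norm{(\Id-\Piht)u}{L^\infty(0,T;L^2(\Omega))}+\Norm{u-\alphaht}{C^0([0,T];L^2(\Omega))}$, and bound the last term, via $\alphaht(\cdot,0)=\Rh u_0$, by $\Norm{(\Id-\Rh)u_0}{L^2(\Omega)}+T\Norm{\dpt(u-\alphaht)}{L^\infty(0,T;L^2(\Omega))}\lesssim h^{\ell+1}\SemiNorm{u_0}{H^{\ell+1}(\Omega)}+\Cballone(\tau^m\Norm{u}{m}+h^{\ell+1}\Norm{u}{\ell})$ using the ball bound. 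Here the coupling $\tau\lesssim h^{d/(2m)}$ makes $h^{-d/2}\tau^m\lesssim1$ and $\ell+1-d/2\ge0$ (automatic for $\ell\ge1$, $d\le3$) makes $h^{-d/2}h^{\ell+1}\lesssim1$, so that every surviving norm of $u$ is again multiplied by $|k|$ and hence is $\le\Cwellpone$; this yields $|k|\Norm{\alphaht-\Piht u}{L^\infty(\QT)}+|k|\Norm{\dpt(\alphaht-\Piht u)}{L^1(0,T;L^\infty(\Omega))}\lesssim\Cballone\Cwellpone$. Adding the two contributions gives $|k|\Norm{\alphaht}{L^\infty(\QT)}+|k|\Norm{\dpt\alphaht}{L^1(0,T;L^\infty(\Omega))}\le C(1+\Cballone)\Cwellpone$, which lies below $\min\{|k|\la,\,|k|\ua,\,\Ca\}$ once $\Cwellpone$ is small relative to $\Cballone$; this is exactly Assumption~\ref{asm:nondegeneracy} for $\alphaht$ together with~\eqref{eq:bound-alpha-W-1-infty}, so $\calT(\alphaht)$ is well defined.

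\textbf{Self-mapping.} With $\alphaht\in\ball$ now admissible, Theorem~\ref{thm:linear-error-bound} applies to $\uht=\calT(\alphaht)$. In~\eqref{eq:error-dpt}, the consistency/approximation part of the bound equals, by~\eqref{m ell norms} (and using $1+|k|\Norm{u_0}{H^2(\Omega)}\le2$ and $\SemiNorm{u_1}{H^{\ell+1}(\Omega)}\le\Norm{\dpt u}{C^0([0,T];H^{\ell+1}(\Omega))}$), a fixed multiple of $\tau^m\Norm{u}{m}+h^{\ell+1}\Norm{u}{\ell}$; the two linearization terms carry the factors $|k|\Norm{\dpt u}{L^1(0,T;L^\infty(\Omega))}$ and $|k|\Norm{\dptt u}{L^1(0,T;L^\infty(\Omega))}$, each $\le\Cwellpone$, times $\Norm{\dpt(u-\alphaht)}{L^\infty(0,T;L^2(\Omega))}$ and $\Norm{u-\alphaht}{C^0([0,T];L^2(\Omega))}$, which by the ball bounds and the elementary estimate above are $\lesssim\Cballone(\tau^m\Norm{u}{m}+h^{\ell+1}\Norm{u}{\ell})$ (the residual $h^{\ell+1}\SemiNorm{u_0}{H^{\ell+1}(\Omega)}$ being absorbed since $|k|\SemiNorm{u_0}{H^{\ell+1}(\Omega)}\le\Cwellpone$ and $h^{\ell+1}\Norm{\dptt u}{L^1(0,T;L^\infty(\Omega))}\lesssim h^{\ell+1}\Norm{u}{\ell}$). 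Hence $\Norm{\dpt(u-\uht)}{L^\infty(0,T;L^2(\Omega))}\le(C\Clinone+C\Cwellpone\Cballone)(\tau^m\Norm{u}{m}+h^{\ell+1}\Norm{u}{\ell})\le\Cballone(\tau^m\Norm{u}{m}+h^{\ell+1}\Norm{u}{\ell})$ for $\Cballone$ large relative to $\Clinone$ and $\Cwellpone$ small relative to $\Cballone$. The same reasoning applied to~\eqref{eq:error-nabla} — whose linearization terms are identical and whose approximation part equals a fixed multiple $\Clintwo$ of $\tau^m\Norm{u}{m,\tau}+h^\ell\Norm{u}{\ell,h}$ — gives $\Norm{\nabla(u-\uht)}{L^\infty(0,T;L^2(\Omega)^d)}\le\Cballtwo(\tau^m\Norm{u}{m,\tau}+h^\ell\Norm{u}{\ell,h})$ for $\Cballtwo$ large; together with $\uht(\cdot,0)=\alphaht(\cdot,0)=\Rh u_0$ this shows $\uht\in\ball$, i.e.\ $\calT(\ball)\subset\ball$.

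\textbf{Anticipated main obstacle.} The hard part is the well-definedness step: membership in $\ball$ only controls $\alphaht$ in $L^2$-based, energy-type norms, whereas both the non-degeneracy of $1+k\alphaht$ and the bound~\eqref{eq:bound-alpha-W-1-infty} on $\dpt\alphaht$ are $L^\infty$-in-space requirements. Bridging this gap forces the use of the spatial inverse inequality, and its negative power $h^{-d/2}$ is absorbed precisely by the coupling $\tau\lesssim h^{d/(2m)}$ (so that $h^{-d/2}\tau^m\lesssim1$) and by $\ell+1-d/2\ge0$; the remaining factor is then driven to be small by the data-smallness~\eqref{assumpt selfmapping}, which is exactly why $\Cwellpone$ must be chosen small relative to $\Cballone$. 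The subsequent constant bookkeeping in the self-mapping step is routine, but care is needed to match the right-hand sides of~\eqref{eq:error-dpt}--\eqref{eq:error-nabla} term-by-term with the ball radii via~\eqref{m ell norms}.
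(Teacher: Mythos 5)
Your proposal is correct and follows essentially the same route as the paper: verify $\Piht u\in\ball$ for non-emptiness, bridge the $L^2$-to-$L^\infty$ gap for $\alphaht$ via a spatial inverse estimate on a discrete remainder (absorbing $h^{-d/2}$ through $\tau\lesssim h^{d/(2m)}$ and $\ell+1-d/2\ge 0$, and using $\alphaht(\cdot,0)=\Rh u_0$ plus the fundamental theorem of calculus for the $C^0([0,T];L^2(\Omega))$ bound), then close the self-mapping by combining Theorem~\ref{thm:linear-error-bound} with the ball bounds and calibrating $\Cballone,\Cballtwo$ against $\Clinone,\Clintwo$ and $\Cwellpone$. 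The only (immaterial) difference is that you pivot on $\Piht u$ with the $L^\infty$-stability of $\Rh$ and $\Pt$, whereas the paper pivots on the Lagrange interpolant $\Ih u$ with Lemmas~\ref{lemma:stab-Ih} and~\ref{lemma:approx-Ih}; both are supported by the paper's appendix.
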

\begin{proof}
The ball~$\ball$ is non-empty since $\Piht u$ belongs to it as long as 
\begin{equation*}
    \Cballone \geq \Cpione, \quad \Cballtwo \geq \Cpitwo,
\end{equation*}
where~$\Cpione$ and~$\Cpitwo$ are the hidden constants in~\eqref{eq:interpolation-error-dpt} and~\eqref{eq:interpolation-error-nabla}, respectively. \vspace{1mm}

Let $\alphaht \in \ball$. The rest of the statement will follow by Theorem~\ref{thm:linear-error-bound} once we check that the assumptions made there on $\alphaht$ are satisfied. \\[1mm]

\noindent $\bullet$ {\bf Nondegeneracy and boundedness of the variable coefficient}. We first verify Assumption~\ref{asm:nondegeneracy}. It is sufficient to show that
\begin{equation} \label{smallness alphaht}
	|k| \|\alphaht\|_{L^{\infty}(0, T; \Linf)} \leq   |k|\ua <1.
\end{equation}
Then $\la =\ua$ in \eqref{eq:nondegeneracy-assumption}.

We use the triangle inequality, the stability and approximation properties of the interpolant~$\Ih{}$ in Lemmas~\ref{lemma:stab-Ih} and~\ref{lemma:approx-Ih}, and the polynomial inverse estimate~\eqref{eq:inverse-estimate-space-L-infty} to obtain
\begin{alignat*}{3}
\nonumber
\Norm{\alphaht}{L^{\infty}(0, T; \Linf)} 
& \le \Norm{\alphaht - \Ih{} u}{L^{\infty}(0, T; \Linf)} + \Norm{\Ih{} u}{L^{\infty}(0, T; \Linf)}\\
\nonumber
& \le \Cinv h^{-d/2} \Norm{\alphaht - \Ih{} u}{L^{\infty}(0, T; \Ltwo)} + \CstabI \Norm{u}{L^{\infty}(0, T; \Linf)} \\
\nonumber
& \le \Cinv h^{-d/2} \big(\Norm{\alphaht - u}{L^{\infty}(0, T; \Ltwo)} + \Norm{\Ih{} u - u}{L^{\infty}(0, T; \Ltwo)} \big) \\
\nonumber
& \quad + \CstabI \Norm{u}{L^{\infty}(0, T; \Linf)} \\
\nonumber
& \le \Cinv h^{-d/2} \big(\Norm{\alphaht - u}{L^{\infty}(0, T; \Ltwo)} + \CapproxI h^{\ell + 1} \Norm{u}{L^{\infty}(0, T; H^{\ell + 1}(\Omega))} \big) \\
& \quad + \CstabI \Norm{u}{L^{\infty}(0, T; \Linf)}.
\end{alignat*}
By \eqref{est: Rh L2} and the fact that $\alphaht \in \ball$, we have
\begin{alignat}{3}
\nonumber
& h^{-d/2} \Norm{\alphaht - u}{C^0([0, T]; \Ltwo)} \\ \nonumber
& \leq h^{-d/2}(\Norm{(\Id - \Rh) u_0}{L^2(\Omega)} + T \Norm{\dpt(u - \alphaht)}{L^{\infty}(0, T; L^2(\Omega))}) \\
& \leq \begin{multlined}[t] \CapproxRh h^{\ell+1-d/2}|u_0|_{\Hlplusone}+T  h^{-d/2} \Cballone \left(\tau^m \Norm{u}{m}  + h^{\ell + 1}\Norm{u}{\ell} \right). \label{est alpht}
		  \end{multlined}
\end{alignat}
Due to the condition~$\tau \lesssim h^{\frac{d}{2m}}$, and the fact that~$m \geq 2$ and~$\ell \geq 1$,  we have uniform bounds $h^{l+1-d/2} \leq C$ and $\tau^m h^{-d/2} \leq C$. Since $L^{\infty}(0, T; H^{\ell + 1}(\Omega)) \hookrightarrow L^{\infty}(0, T; \Linf)$, the smallness condition in \eqref{smallness alphaht} can be ensured to hold for small enough (relative to $\Cballone$)
\begin{equation*} 
\begin{aligned}
	\begin{multlined}[t]
|k|\left(\SemiNorm{u_0}{\Hlplusone} + \Norm{u}{m} + \Norm{u}{\ell} +\Norm{u}{L^{\infty}(0, T; H^{\ell + 1}(\Omega))}  \right).
\end{multlined}
\end{aligned}
\end{equation*}
\indent Next, we check that condition \eqref{eq:bound-alpha-W-1-infty} also holds; that is, we derive the uniform bound:
\begin{equation} \label{boundedness}
	|k|\Norm{\dpt \alphaht}{L^1(0, T; L^{\infty}(\Omega))} \le \Ca.
\end{equation}
We first have
\begin{equation*}
		|k|\Norm{\dpt \alphaht}{L^1(0, T; L^{\infty}(\Omega))}  
		\leq\, |k|{T}\Norm{\dpt \alphaht}{L^\infty(0, T; L^{\infty}(\Omega))}
\end{equation*}
and we can further bound the right-hand side as follows:
\begin{alignat*}{3}
\Norm{\dpt \alphaht}{L^{\infty}(0, T; \Linf)} 
& \le \Norm{\dpt (\alphaht - \Ih{} u)}{L^{\infty}(0, T; \Linf)} + \Norm{\Ih{} \dpt u}{L^{\infty}(0, T; \Linf)} \\
& \le \Cinv h^{-d/2} \big(\Norm{\dpt (\alphaht - u)}{L^{\infty}(0, T; \Ltwo)} + \Norm{\dpt (u - \Ih{} u)}{L^{\infty}(0, T; \Ltwo)} \big) \\
& \quad + \CstabI \Norm{\dpt u}{L^{\infty}(0, T; \Linf)} \\
& \le \Cinv h^{-d/2} \big(\Norm{\dpt (\alphaht - u)}{L^{\infty}(0, T; L^2(\Omega))} + \CapproxI h^{\ell + 1} \Norm{\dpt u}{L^{\infty}(0, T; H^{\ell + 1}(\Omega)}  \big) \\
& \quad + \CstabI \Norm{\dpt u}{L^{\infty}(0, T; L^{\infty}(\Omega))}.
\end{alignat*}
Moreover, since~$\alphaht \in \ball$, we have
\begin{equation*}
h^{-d/2} \Norm{\dpt (\alphaht - u)}{L^{\infty}(0, T; L^2(\Omega))} \le \Cballone h^{-d/2} \big(\tau^m \Norm{u}{m} + h^{\ell + 1} \Norm{u}{\ell}\big). 
\end{equation*}
Thus, we can guarantee that \eqref{boundedness} holds as long as
\begin{equation*}
	|k|\left(\|u\|_{m}  + \Norm{u}{\ell} + \Norm{\dpt u}{L^{\infty}(0, T; H^{\ell + 1}(\Omega))} \right)
\end{equation*}
is sufficiently small (in other words, $\Cwellpone$ is sufficiently small), relative to $\Cballone$. \\[2mm]

Altogether, we conclude that the assumptions of {Proposition}~\ref{prop:a-priori-bounds-linearized} are satisfied. By {Proposition}~\ref{prop:a-priori-bounds-linearized}, we thus know that the mapping $\calT$ is well defined. Furthermore, by \eqref{eq:error-dpt} and \eqref{eq:error-nabla}, and the fact that $\alphaht \in \ball$, we have
\begin{equation*}
\begin{aligned}
	& \Norm{\dpt \eu}{L^{\infty}(0, T; L^2(\Omega))}\\
    & \qquad \leq \Clinone (\tau^m \Norm{u}{m} + h^{\ell + 1} \Norm{u}{\ell} ) \big(1 + |k|\Cballone \Norm{\dpt u}{L^1(0, T; L^{\infty}(\Omega))} +  |k|\Cballone\Norm{\dptt u}{L^1(0, T; L^{\infty}(\Omega))} \big),
       \end{aligned} 
\end{equation*}
and
\begin{equation*}
\begin{aligned}
	& \Norm{\nabla \eu}{L^{\infty}(0, T; L^2(\Omega)^d)} \\
    & \qquad \leq\Clintwo ( \tau^m \Norm{u}{1,\tau}   + h^{\ell} \Norm{u}{2,h}) \big(1 +  |k|\Cballone\Norm{\dpt u}{L^1(0, T; L^{\infty}(\Omega))} +  |k|\Cballone\Norm{\dptt u}{L^1(0, T; L^{\infty}(\Omega))} \big),
   \end{aligned} 
\end{equation*}
where we recall that~$\Clinone$ and~$\Clintwo$ are the constants in Theorem~\ref{thm:linear-error-bound}.

By~\eqref{assumpt selfmapping}, we have 
\[
\Norm{\dpt u}{L^1(0, T; L^{\infty}(\Omega))} + \Norm{\dptt u}{L^1(0, T; L^{\infty}(\Omega))} \leq \Cwellpone,
\]
uniformly in $\delta$. Then,
{it follows that}~$\uht \in \ball$ as long as
\begin{equation*}
	\begin{aligned}
		\Cballone \geq \Clinone (1+  |k|\Cballone \Cwellpone), \quad \Cballtwo \geq \Clintwo (1+  |k|\Cballone \Cwellpone).
	\end{aligned}
\end{equation*}
In other words, as long as $\Cwellpone$ is small enough so that
\begin{equation*}
\max\{\Clinone, \Clintwo\}\, |k| \Cwellpone <1,
\end{equation*}
and~$\Cballone$, $\Cballtwo$ {are} large enough so that
\begin{equation*}
	\Cballone \geq \frac{\Clinone}{1- \Clinone |k| \Cwellpone}, \quad 	\Cballtwo \geq \frac{\Clintwo}{1- \Clintwo |k| \Cwellpone}.
\end{equation*}
Thus, for properly calibrated constants, we can conclude that $\calT(\ball) \subset \ball$.
\end{proof}
We observe that the condition  $\tau \lesssim h^{\frac{d}{2m}}$ is invoked in the above proof by the need to use an inverse estimate in space to ensure the non-degeneracy of the problem and uniformly bound $\|\partial_t \alphaht\|_{L^\infty(0,T; \Linf)}$.

\begin{remark}[On the smallness of $u$]
    We note that the smallness imposed on $u$ via \eqref{smallness alphaht} (which can be enforced via the smallness of data) could be somewhat relaxed by complementing it with a smallness assumption on $|k|T(h^{\ell+1-d/2}+\tau^m h^{-d/2})$ and $|k|\| u\|_{L^\infty(0,T; \Linf)}$, especially for high-order approximations and sufficiently smooth solutions. 
    This might be relevant for ultrasound applications, where data is often smooth but not necessarily small in higher-order norms; see~\cite[\S4]{kaltenbacher2022parabolic} for further details. We also point out that all the smallness conditions imposed in this work are mitigated by the fact that the parameter $k$ is small in practice as it is proportional to $1/c^2$; see, e.g.,~\cite[Ch.~5]{kaltenbacher2007numerical}.
    \eremk
\end{remark}
\subsection{Strict contractivity of the operator} 
In the next step, we prove contractivity of the mapping. Compared to Proposition~\ref{prop: selfmapping}, we strengthen here the assumption on the lower bound on~$\ell$ to~$\ell \geq d/2$.
\begin{proposition}\label{prop: contractivity}
Let the partitions~$\Th$ and~$\Tt$ be quasi-uniform, and the assumptions of Proposition~\ref{prop: selfmapping} hold. Let also~{$h^{\ell + 1 - \frac{d}{2}} \lesssim \tau \lesssim h^{\frac{d}{2{(m-1)}}}$, $2 \le m \le q$, ${1} \le \ell \le p$}, and~{$\ell + 1 - d/2 \geq d/(2(m-1))$}.
There exists $\Cwellptwo>0$ such that, if 
    \begin{equation*}
        |k|         {T^{\frac32}} \Tnorm{u} \leq \Cwellptwo,
    \end{equation*}
where
\begin{equation} \label{def Tnorm u}
    \begin{aligned}
        \Tnorm{u}=&\, \begin{multlined}[t] 
         \Norm{u}{m}+\Norm{u}{\ell}+\Norm{\dpt u}{L^1(0, T; H^{\ell + 1}(\Omega))}+ \Norm{{\dpt^{(m + 1)} u}}{L^{\infty}(0, T; \Ltwo)}\\+\Norm{\dptt u}{C^0([0, T]; \Linf)}+\Norm{\dpt u}{L^{\infty}(0, T; H^{\ell + 1}(\Omega))}+\Norm{\dpt u}{L^1(0, T; \Linf)}
         \\+\Norm{u}{C^0([0, T]; H^{\ell + 1}(\Omega))} + \Norm{{\dpt^{(m + 1)}u} }{L^2(0, T; L^2(\Omega))},
        \end{multlined}
    \end{aligned}
\end{equation}
then the mapping $\calT$ is strictly contractive.
\end{proposition}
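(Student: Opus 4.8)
The plan is to prove strict contractivity by estimating $\calT\alphaht^{(1)} - \calT\alphaht^{(2)}$ in terms of $\alphaht^{(1)} - \alphaht^{(2)}$ using the linear stability result, Theorem~\ref{thm:strong-continuous-dependence}, and closing the argument with the smallness of $|k|T^{3/2}\Tnorm{u}$. Write $\uht^{(i)} = \calT\alphaht^{(i)}$ for $i = 1, 2$, and set $\zeta_h := \uht^{(1)} - \uht^{(2)}$. Since both $\uht^{(i)}$ solve the corresponding linearized problems~\eqref{eq:linearized-space-time-formulation} with the \emph{same} right-hand side $\mathcal{L}$ but different coefficients $1 + k\alphaht^{(i)}$, subtracting the two equations produces an error equation of the form $\Bhtt^{(1)}(\zeta_h, (\wht, \zh)) = \langle\text{perturbation terms}\rangle$, where $\Bhtt^{(1)}$ uses the coefficient $1 + k\alphaht^{(1)}$, and the right-hand side collects the terms arising from $k(\alphaht^{(1)} - \alphaht^{(2)})$ acting on $\uht^{(2)}$ and its derivatives. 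Concretely, moving the coefficient difference to the right, one gets contributions like $-\sum_n \big(\dpt(k(\alphaht^{(1)} - \alphaht^{(2)})\dpt\uht^{(2)}), \wht\big)_{\Qn}$ together with the matching jump and $\Sigma_0$ terms; these must be recast in the format of $\mathcal{L}_\delta$ in~\eqref{eq:ell-delta-def}, i.e.\ as a source $f$-type term (after integration by parts in time, which generates a $(\xi, \dpt\wht)_{\Qn}$ contribution with $\xi \sim k(\alphaht^{(1)} - \alphaht^{(2)})\dpt\uht^{(2)}$) so that Theorem~\ref{thm:strong-continuous-dependence} applies.

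Next I would apply Theorem~\ref{thm:strong-continuous-dependence} to $\zeta_h$ (noting that Assumption~\ref{asm:nondegeneracy} and~\eqref{eq:bound-alpha-W-1-infty} hold for $\alphaht^{(1)}$ because $\alphaht^{(1)} \in \ball$, as established in the proof of Proposition~\ref{prop: selfmapping}), obtaining
\begin{equation*}
\Tnormd{\zeta_h}^2 \lesssim \Norm{k\,\dpt\big((\alphaht^{(1)} - \alphaht^{(2)})\dpt\uht^{(2)}\big)}{L^1(0,T;L^2(\Omega))}^2 + \Norm{\overline{\tau}^{-1}\,k(\alphaht^{(1)} - \alphaht^{(2)})\dpt\uht^{(2)}}{L^1(0,T;L^2(\Omega))}^2,
\end{equation*}
possibly with further terms of the same structure. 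The key is then to bound the right-hand side by $C|k|^2 (\text{something}) \Tnormd{\alphaht^{(1)} - \alphaht^{(2)}}^2$, with a constant that can be made $< 1$. Expanding $\dpt((\alphaht^{(1)} - \alphaht^{(2)})\dpt\uht^{(2)}) = \dpt(\alphaht^{(1)} - \alphaht^{(2)})\dpt\uht^{(2)} + (\alphaht^{(1)} - \alphaht^{(2)})\dptt\uht^{(2)}$, I would control $\dpt\uht^{(2)}$ and $\dptt\uht^{(2)}$ in appropriate $L^\infty$-in-space norms: since $\uht^{(2)} \in \ball$, $\dpt\uht^{(2)}$ is close to $\dpt u$, and its $L^\infty(\Omega)$-norm is controlled via an inverse estimate in space (the condition $h^{\ell+1-d/2} \lesssim \tau$ together with $\tau \lesssim h^{d/(2(m-1))}$ and $\ell+1-d/2 \geq d/(2(m-1))$ is exactly what makes the error terms $\tau^m h^{-d/2}$, $h^{\ell+1-d/2}$ bounded, so that $\|\dpt\uht^{(2)}\|_{L^\infty(0,T;\Linf)} \lesssim \|\dpt u\|_{L^\infty(0,T;\Linf)} + \text{(bounded)}$). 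The discrete second derivative $\dptt\uht^{(2)}$ requires more care: one uses an inverse estimate in time on each $\In$ to pass from $\dptt\uht^{(2)}$ to $\tau_n^{-1}\dpt\uht^{(2)}$, which is where the $\overline{\tau}^{-1}$ weight and the factor $T^{1/2}$ (from Hölder in time over $(0,T)$) enter, ultimately producing the $T^{3/2}$ scaling in the smallness condition $|k|T^{3/2}\Tnorm{u} \leq \Cwellptwo$.

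After these reductions I would arrive at $\Tnormd{\zeta_h} \leq C|k|T^{3/2}\Tnorm{u}\,\Tnormd{\alphaht^{(1)} - \alphaht^{(2)}}$ for a constant $C$ independent of $h$, $\tau$, $\delta$; choosing $\Cwellptwo$ so that $C\Cwellptwo < 1$ gives the strict contraction in the $\Tnormd{\cdot}$ metric on $\ball$. One should also check that $\zeta_h(\cdot,0) = 0$ (both iterates satisfy $\uht^{(i)}(\cdot,0) = \Rh u_0$), which removes the initial-data terms from the stability estimate and is essential for getting a genuine contraction rather than an affine bound. The main obstacle I anticipate is the treatment of $\dptt\uht^{(2)}$: it is not directly controlled by membership in $\ball$ (which only bounds first derivatives), so one must either invoke the weak bound on the discrete solution analogous to Proposition~\ref{prop:weak-continuous-dependence} combined with inverse estimates in time, or more carefully integrate by parts so that only $\dpt\uht^{(2)}$ appears paired against $\dpt\wht$ in the $\xi$-term, thereby avoiding $\dptt\uht^{(2)}$ altogether at the cost of the $\overline{\tau}^{-1}$ weight — the scaling bookkeeping here (balancing $\tau^{-1}$ against the lower bound $\tau \gtrsim h^{\ell+1-d/2}$) is the delicate part and is precisely what forces the two-sided CFL-type window on $\tau$ and the extra condition $\ell + 1 - d/2 \geq d/(2(m-1))$.
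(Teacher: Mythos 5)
Your overall strategy coincides with the paper's: subtract the two linearized problems to get an error equation for $\buht=\uhttwo-\uhtone$ driven by the coefficient difference $k\balphaht$, apply the $\delta$-robust stability estimate of Theorem~\ref{thm:strong-continuous-dependence} (with vanishing initial data, since both iterates start from $\Rh u_0$), control $\dptt\uhtone$ by inverse estimates in time and space using membership of $\uhtone$ in $\ball$, and close the contraction via the smallness of $|k|T^{3/2}\Tnorm{u}$. Your identification of the main obstacle (the discrete second time derivative is not controlled by the ball) and of the role of the two-sided condition $h^{\ell+1-d/2}\lesssim\tau\lesssim h^{d/(2(m-1))}$ is exactly what drives the paper's estimate \eqref{bound utt discrete}; the paper takes the first of your two options, comparing $\dpt\uhtone$ with $(\Pt\circ\Ih)\dpt u$ and invoking \eqref{eq:inverse-estimate-time} and \eqref{eq:inverse-estimate-space-L-infty}, rather than Proposition~\ref{prop:weak-continuous-dependence}.

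The one place where your sketch does not close as written is the treatment of the interface terms. Integrating the volume term $-\sum_n\big(\dpt(k\balphaht\,\dpt\uhtone),\wht\big)_{\Qn}$ by parts in time to produce a $(\xi,\dpt\wht)_{\Qn}$ contribution with $\xi\sim k\balphaht\,\dpt\uhtone$ generates residual boundary terms of the form $(k\balphaht\,\dpt\uhtone,\wht)_{\ST}$ and $\sum_{n}(k\balphaht(\cdot,\tn)\dpt\uhtone(\cdot,\tn^-),\jump{\wht}_n)_\Omega$, which are not absorbed by the jump terms already present in the error equation and do not fit the template \eqref{eq:ell-delta-def} to which Theorem~\ref{thm:strong-continuous-dependence} applies. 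The paper instead keeps the product-rule expansion $k\dpt\balphaht\,\dpt\uhtone+k\balphaht\,\dptt\uhtone$ as an $f$-type source and converts the jump contribution $\sum_n(k\balphaht(\cdot,\tn)\jump{\dpt\uhtone}_n,\wht(\cdot,\tn^+))_\Omega$ into a volume source via the lifting operator $\La$ of \eqref{def:L-alpha}; the lifting is then estimated by a duality argument (testing with $\wht=\La\uhtone$) that combines the trace and inverse inequalities with the jump-seminorm control coming from \eqref{eq:estimate-projected-error} and the combined interpolation estimate \eqref{eq:combined-interpolation-error} --- this is where the lower bound $h^{\ell+1-d/2}\lesssim\tau$ is genuinely consumed. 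So either adopt the lifting-operator route, or, if you insist on integrating by parts, you must track and estimate the residual $\ST$- and $\jump{\wht}_n$-terms separately, which takes you outside the stated stability theorem.
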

\begin{proof}
Take $\alphahtone$, $\alphahttwo \in \ball$ and denote $\balphaht=\alphahttwo-\alphahtone$. Let $\uhttwo = \calT \alphahttwo$ {and}~$\uhtone= \calT \alphahtone \in \ball$. The difference $\buht= \uhttwo-\uhtone$ then solves
\begin{alignat}{3}
 \nonumber
\Bhtt(\buht, (\wht, 0)) & = - \sum_{n = 1}^N \big(k \dpt \balphaht \dpt \uhtone + k  \balphaht \dpt^2 \uhtone, \wht\big)_{\Qn} \\
\label{diff problem}
& \quad + \sum_{n = 1}^{N - 1} \big( k \balphaht(\cdot, \tn) \jump{\dpt \uhtone}_n, \wht(\cdot, \tn^+) \big)_{\Omega}
            \qquad \forall \wht \in \Wht,
\end{alignat}
where $\alphaht$ in the definition of $\Bhtt{(\cdot, \cdot)}$ in \eqref{eq:Bhtt-def} is replaced by $\alphahttwo$ here.

By defining the following lifting operator~$\mathfrak{L}_{\alpha} : \Vht \rightarrow \Wht$: 
\begin{equation}
	\label{def:L-alpha}
	\int_{\QT} \mathfrak{L}_{\alpha} \uht \wht \dV = \sum_{n = 1}^{N - 1} \big(k \balphaht (\cdot, \tn) \jump{\dpt \uht}_n, \wht(\cdot, \tn^+) \big)_{\Omega} \qquad \forall \wht \in \Wht,
\end{equation}
the variational problem~\eqref{diff problem} can be written as
\begin{equation*}
	\begin{aligned}
		\begin{multlined}[t]	\Bhtt(\buht, (\wht, 0)) = - \big(k \dpt \balphaht \dpt \uhtone+k  \balphaht \dpt^2 \uhtone, \wht\big)_{\QT}
			+ \big(\mathfrak{L}_{\alpha} \uhtone, \wht\big)_{\QT}.
		\end{multlined}
	\end{aligned}
\end{equation*}
We can then rely on an analogous stability estimate to~\eqref{eq:strong-continuous-dependence} with zero initial and Dirichlet boundary data,  $\mu= 0$, and~$\xi=0$; in other words,
 \begin{equation} \label{est delta norm}
	\Tnormd{\buht} \lesssim \sum_{n = 1}^{N} \Norm{f}{L^1(\In; L^2(\Omega))},
\end{equation} 
where $f =  - k  \balphaht \dpt^2 \uhtone-k \dpt \balphaht \dpt \uhtone +\mathfrak{L}_{\alpha} \uhtone$ (recall the definition of $\Tnormd{\cdot}$ in \eqref{eq:energy-norm-delta}).\\
\indent Toward estimating $f$, we note that
\begin{alignat*}{3}
\nonumber
|k|\sum_{n = 1}^N \Norm{\balphaht \dptt \uhtone}{L^1(0, T; \Ltwo)}
	& \leq |k|\sum_{n = 1}^N  \Norm{\balphaht}{L^{\infty}(\In; \Ltwo)} \Norm{\dptt \uhtone}{L^1(\In; \Linf)}\\
	& \leq |k| \Norm{\balphaht}{C^0([0, T]; \Ltwo)} \sum_{n = 1}^N \Norm{\dptt \uhtone}{L^1(\In; \Linf)}.
\end{alignat*}

 Further, due to~$\balphaht(\cdot, 0) = 0$, we have
\begin{equation*}
\Norm{\balphaht}{C([0,T]; \Ltwo)} =\left \| \int_0^t \dpt \balphaht \ds \right \|_{L^\infty(0,T; \Ltwo)} \leq T \Norm{\dpt \balphaht}{L^{\infty}(0, T; \Ltwo)}. 
\end{equation*}
Next, using the triangle inequality, the polynomial inverse estimates~\eqref{eq:inverse-estimate-time} and~\eqref{eq:inverse-estimate-space-L-infty}, and the stability and approximation properties of~$\Pt$ and~$\Ih$ in Lemmas~\ref{lemma:stab-Pt}, \ref{lemma:estimates-Pt}, \ref{lemma:stab-Ih}, and~\ref{lemma:approx-Ih}, we obtain
\begin{equation} \label{bound utt discrete}
\begin{aligned}
\sum_{n = 1}^N & \Norm{\dptt \uhtone}{L^1(\In; \Linf)} \\
& \le \sum_{n = 1}^N \Norm{\dptt \uhtone - \dpt (\Pt \circ \Ih{}) \dpt u}{L^1(\In; \Linf)} + \Norm{\dpt (\Pt \circ \Ih{}) \dpt u}{L^1(0, T; \Linf)} \\
& \le \Cinv^2 \sum_{n = 1}^N \tau_n^{-1} h^{-d/2} \Norm{\dpt \uhtone - (\Pt \circ \Ih) \dpt u}{L^1(\In; L^2(\Omega))} \\
& \quad + \CstabPt \Norm{\Ih \dptt u}{C^0([0, T]; \Linf)} \\
& \le \Cinv^2 \sum_{n = 1}^N \tau_n^{-1} h^{-d/2} \Big(\Norm{\dpt (\uhtone - u)}{L^1(\In; \Ltwo)} + \Norm{\dpt u - \Ih \dpt u}{L^1(\In; \Ltwo)} \\
& \quad + \Norm{\Ih (\dpt u - \Pt \dpt u)}{L^1(\In; \Ltwo)} \Big) + \CstabPt \CstabI \Norm{\dptt u}{C^0([0, T]; \Linf)} \\
& \le \Cinv^2 \tau^{-1} h^{-d/2} T \Norm{\dpt( \uhtone - u)}{L^{\infty}(0, T; L^2(\Omega))}  + \Cinv^2 \tau^{-1} h^{\ell + 1 - d/2} \Norm{\dpt u}{L^1(0, T; H^{\ell + 1}(\Omega))} \\
& \quad + \Cinv^2 \CstabI \tau^{-1} h^{-d/2} \Norm{\dpt u - \Pt \dpt u}{L^1(\In; L^2(\Omega))} + \CstabPt \CstabI \Norm{\dptt u}{C^0([0, T]; \Linf)} \\
& \le \Cinv^2 \tau^{-1} h^{-d/2} T \Cballone \big(\tau^m \Norm{u}{m} + h^{\ell + 1} \Norm{u}{\ell} \big) + \Cinv^2 \tau^{-1} h^{\ell + 1 - d/2} \Norm{\dpt u}{L^1(0, T; H^{\ell + 1}(\Omega))} \\
& \quad + \Cinv^2 \CstabI \tau^{m - 1} h^{-d/2} \Norm{{\dpt^{(m + 1)} u}}{L^{\infty}(0, T; \Ltwo)} + \CstabPt \CstabI \Norm{\dptt u}{C^0([0, T]; \Linf)},
\end{aligned}
\end{equation}
where we have used the quasi-uniformity of~$\Tt$.

The assumption~$h^{\ell + 1 - \frac{d}2} \lesssim \tau \lesssim h^{\frac{d}{2{(m-1)}}}$ implies the existence of a positive constant~$C$, independent of~$h$ and~$\tau$ such that

\begin{equation}
\label{eq:tau-h-relation}
    \tau^{m - 1} h^{-\frac{d}{2}}
    \le C \quad \text{ and } \quad \tau^{-1} h^{\ell + 1 - d/2} \le C.
\end{equation}
Therefore, we have
\begin{equation*}
\begin{aligned}
&|k| \sum_{n = 1}^N \Norm{\balphaht \dptt \uhtone}{L^1(0, T; \Ltwo)}\\
& \qquad \lesssim \,\begin{multlined}[t] |k| T (\Norm{u}{m}+\Norm{u}{\ell}+\Norm{\dpt u}{L^1(0, T; H^{\ell + 1}(\Omega))} + \Norm{{\dpt^{(m + 1)} u}}{L^{\infty}(0, T; \Ltwo)}\\
+ \Norm{\dptt u}{C^0([0, T]; \Linf)}) \Norm{\balphaht}{L^{\infty}(0, T; \Ltwo)}.
\end{multlined}
\end{aligned}
\end{equation*}

The second term within $f$ can be treated as follows:
\begin{alignat*}{3}
|k| \Norm{\dpt \balphaht \dpt \uhtone}{L^1(0, T; \Ltwo)} & \le |k| \Norm{\dpt \balphaht}{L^{\infty}(0, T; \Ltwo)} \Norm{\dpt \uhtone}{L^1(0, T; \Linf)}.
\end{alignat*}
We can reason that $\| \dpt \uhtone\|_{L^\infty(0,T; L^\infty(\Omega))}$ can be made small enough by reducing $u$ in suitable norms similarly to above using the fact that $\uhtone \in \ball$:
\begin{alignat*}{3}
\Norm{\dpt \uhtone}{L^1(0,T; \Linf)} & \le \Norm{\dpt \uhtone - \dpt \Ih u}{L^1(0, T; \Linf)} + \Norm{\Ih \dpt u}{L^1(0, T; \Linf)} \\
& \le \Cinv h^{-\frac{d}{2}} T \Norm{\dpt \uhtone - \dpt \Ih u}{L^{\infty}(0, T; \Ltwo)} + \CstabI \Norm{\dpt u}{L^1(0, T; \Linf)} \\
& \le \Cinv h^{-\frac{d}{2}} T \big(\Norm{\dpt \uhtone - \dpt u}{L^{\infty}(0, T; \Ltwo)} + \Norm{\dpt u - \Ih \dpt u}{L^{\infty}(0, T; \Ltwo)} \big) \\
& \quad + \CstabI \Norm{\dpt u}{L^1(0, T; \Linf)} \\
& \le \Cinv h^{-\frac{d}{2}} T \Big(\Cballone (\tau^m \Norm{u}{m} + h^{\ell + 1} \Norm{u}{\ell}) + h^{\ell + 1} \Norm{\dpt u}{L^{\infty}(0, T; H^{\ell + 1}(\Omega))}\Big) \\
& \quad + \CstabI \Norm{\dpt u}{L^1(0, T; \Linf)}.
\end{alignat*}
Thus,
\begin{equation*}
	\begin{aligned}
        &|k| \Norm{\dpt \balphaht \dpt \uhtone}{L^1(0, T; \Ltwo)} \\
		& \qquad \leq C |k| 
        T (\Norm{u}{m}+\Norm{u}{\ell}+\Norm{\dpt u}{L^{\infty}(0, T; H^{\ell + 1}(\Omega))}+\Norm{\dpt u}{L^1(0, T; \Linf)}) \Norm{\dpt \balphaht }{L^{\infty}(0, T; \Ltwo)}.
	\end{aligned}
\end{equation*}
\indent It remains to estimate the $\mathfrak{L}_{\alpha}$ term:
\[
\|\mathfrak{L}_{\alpha} \uhtone\|_{L^1(0,T; \Ltwo)} \leq \sqrt{T} \|\mathfrak{L}_{\alpha} \uhtone\|_{L^2(0,T; \Ltwo)}.
\]
Taking~$\wht = \mathfrak{L}_{\alpha} \uhtone$ in~\eqref{def:L-alpha}, and using the inverse estimates in Lemma~\ref{lemma:inverse-estimate}, the trace inequalities in Lemma~\ref{lemma:trace-inequality}, and the continuity in time of~$\dpt u$, we obtain
\begin{alignat*}{3}
\Norm{\mathfrak{L}_{\alpha} \uhtone}{L^2(\QT)}^2 
& = \sum_{n = 1}^{N - 1} \big(k \balphaht(\cdot, \tn) \jump{\dpt \uhtone}_n, \mathfrak{L}_{\alpha} \uhtone (\cdot, \tn^+) \big)_{\Omega} \\
& \le |k| \sum_{n = 1}^{N - 1} \Norm{\balphaht(\cdot, \tn) \jump{\dpt \uhtone}_n}{L^2(\Omega)} \Norm{\mathfrak{L}_{\alpha} \uhtone(\cdot, \tn^+)}{L^2(\Omega)} \\
& \le |k| {T} \Norm{{\dpt \balphaht}}{L^{\infty}(0, T; L^2(\Omega))} \sum_{n = 1}^{N - 1} \Norm{\jump{\dpt \uhtone}_n}{L^{\infty}(\Omega)} \Norm{\mathfrak{L}_{\alpha} \uht(\cdot, \tn^+)}{L^2(\Omega)} \\
& \le \Cinv |k|{T} \Norm{{\dpt \balphaht}}{L^{\infty}(0, T; L^2(\Omega))} \sum_{n = 1}^{N - 1} h^{-\frac{d}{2}} \Norm{\jump{\dpt \uhtone}_n}{L^2(\Omega)} \Norm{\mathfrak{L}_{\alpha} \uht(\cdot, \tn^+)}{L^2(\Omega)} \\
& \le \Cinv |k| {T} \Norm{{\dpt \balphaht}}{L^{\infty}(0, T; L^2(\Omega))} \sum_{n = 1}^{N - 1} h^{-\frac{d}{2}} \Big(\Norm{\jump{\dpt (\uhtone -  \Piht u)}_n}{L^2(\Omega)} \\
& \qquad + \Norm{\jump{\dpt (\Piht u - u)}_n}{L^2(\Omega)}\Big) \Norm{\mathfrak{L}_{\alpha} \uhtone (\cdot, \tn^+)}{L^2(\Omega)} \\
& \le \Ctr^{\star} \Cinv |k| {T} \Norm{{\dpt \balphaht}}{L^{\infty}(0, T; L^2(\Omega))} \tau^{-\frac12} h^{-\frac{d}{2}} \sum_{n = 1}^{N - 1}  \Big( \Norm{\jump{\dpt (\uhtone - \Piht u)}_n}{L^2(\Omega)} \\
& \qquad +  \Norm{\jump{\dpt (\Piht u - u)}_n}{L^2(\Omega)} \Big) \Norm{\mathfrak{L}_{\alpha} \uht}{L^2(Q_{n + 1})} \\
& \le \sqrt{2} \Ctr^{\star} \Cinv |k| {T} \Norm{{\dpt \balphaht}}{L^{\infty}(0, T; L^2(\Omega))} \tau^{-\frac12} h^{-\frac{d}{2}} \Big( \big|\dpt (\uhtone - \Piht u) \big|_{\sf J}  \\
& \qquad + \Ctr \Big(\sum_{n = 1}^{N} \tau_n^{-1} \Norm{\dpt(\Piht u - u)}{L^2(\Qn)}^2 + \tau_n \Norm{\dptt (\Piht u - u)}{L^2(\Qn)}^2 \Big)^{\frac12} \Big) \Norm{\mathfrak{L}_{\alpha} \uht}{L^2(\QT)}.
\end{alignat*}
Thus,
\begin{alignat}{3}
\nonumber
      \Norm{\mathfrak{L}_{\alpha} \uht}{L^1(0, T; \Ltwo)} &  \le \sqrt{2} \Ctr^{\star} \Cinv |k| T^{\frac32} \Norm{\dpt \balphaht}{L^{\infty}(0, T; L^2(\Omega))} \tau^{-\frac12} h^{-\frac{d}{2}} \Big\{ \big|\dpt (\uhtone - \Piht u) \big|_{\sf J} \\
    \label{est L alpha}
& \quad + \Ctr \Big(\sum_{n = 1}^{N} \tau_n^{-1} \Norm{\dpt(\Piht u - u)}{L^2(\Qn)}^2 + \tau_n \Norm{\dptt (\Piht u - u)}{L^2(\Qn)}^2 \Big)^{\frac12} \Big\}.
\end{alignat}
Further, using~\eqref{eq:estimate-projected-error}, we have
\begin{equation*}
    \begin{aligned}
        \big|\dpt (\uhtone - \Piht u) \big|_{\sf J}  \lesssim&\, \tau^m \Norm{u}{m} 
  \big(1 + |k|\Cballone \Norm{\dpt u}{L^1(0, T; L^{\infty}(\Omega))} + |k|\Cballone\Norm{\dptt u}{L^1(0, T; L^{\infty}(\Omega))}\big)\\
& \quad + h^{\ell + 1} \Norm{u}{\ell}
 \big(1 + |k|\Cballone\Norm{\dpt u}{L^1(0, T; L^{\infty}(\Omega))} + |k|\Cballone\Norm{\dptt u}{L^1(0, T; L^{\infty}(\Omega))} \big),
    \end{aligned}
\end{equation*}
which, combined with~\eqref{eq:combined-interpolation-error}, \eqref{est L alpha}, and the assumption~$h^{\ell + 1 - d/2} \lesssim \tau \lesssim h^{d/(2m)}$, gives
\begin{equation*}
\begin{aligned}
\Norm{\La \uht}{L^1(0, T; \Ltwo)} 
\lesssim&\, |k| T^{\frac32} \Bigl\{ \Norm{u}{m} 
  \big(1 + |k|\Cballone \Norm{\dpt u}{L^1(0, T; L^{\infty}(\Omega))} + |k|\Cballone \Norm{\dptt u}{L^1(0, T; L^{\infty}(\Omega))}\big) \\
  &+  \Norm{u}{\ell}(1 + |k|\Cballone \Norm{\dpt u}{L^1(0, T; L^{\infty}(\Omega))} + |k|\Cballone \Norm{\dptt u}{L^1(0, T; L^{\infty}(\Omega))} \big)\\
  & +\Norm{u}{C^0([0, T]; H^{\ell + 1}(\Omega))} + \Norm{{\dpt^{(m + 1)} u}}{L^2(0, T; L^2(\Omega))}\Bigr\} \Norm{\dpt \balphaht}{L^{\infty}(0, T; \Ltwo)}.
\end{aligned}
\end{equation*}

By combining the derived estimates with \eqref{est delta norm}, we arrive at
\begin{equation*}
\begin{aligned}
\Tnormd{\buht} 
    \lesssim & \, |k| T^{\frac32} \Tnorm{u} (1 + |k|\Cballone \Norm{\dpt u}{L^1(0, T; L^{\infty}(\Omega))} + |k|\Cballone \Norm{\dptt u}{L^1(0, T; L^{\infty}(\Omega))})  \Tnormd{\balphaht}\\
    \lesssim&\, (1 + |k|\Cballone \Norm{\dpt u}{L^1(0, T; L^{\infty}(\Omega))} + |k|\Cballone \Norm{\dptt u}{L^1(0, T; L^{\infty}(\Omega))}) \Cwellptwo \Tnormd{\balphaht}.
   \end{aligned} 
\end{equation*} 
We can thus guarantee strict contractivity of the mapping by reducing $\Cwellptwo$ (relative to $\Cballone$).
\end{proof}
The above proof of in Proposition~\ref{prop: contractivity} involves estimating $\sum_{n = 1}^N  \Norm{\dptt \uhtone}{L^1(\In; \Linf)} $ in \eqref{bound utt discrete}. As this second-derivative term is not present in the discrete energy, we are forced to use inverse estimates in both time and space.  This invokes the assumption ~$h^{\ell + 1 - \frac{d}{2}} \lesssim \tau \lesssim h^{\frac{d}{2{(m-1)}}}$ (strengthened from $\tau \lesssim h^{\frac{d}{2m}}$ needed for proving the self-mapping property). In the discretization approaches for nonlinear acoustic models that employ finite-element discretization in space with standard time-stepping schemes in~\cite{Dorich_Nikolic:2024, Nikolic:2023} a similar issue is circumvented by considering time-differentiated (semi)discrete problems; this approach, however, does not seem easily transferrable to the present setting. \\
\indent We next prove the well-posedness and accuracy of the nonlinear discrete problem. 
\begin{theorem} [\emph{A priori} estimates for the discrete Westervelt equation] \label{thm: h tau conv nonlinear}
	Let the assumptions of Propositions~\ref{prop: selfmapping} and~\ref{prop: contractivity} hold with~$h^{\ell + 1 - \frac{d}{2}} \lesssim \tau \lesssim h^{\frac{d}{2{(m-1)}}}$, $2 \le m \le q$, ${1} \le \ell \le p$, and~{$\ell + 1 - d/2 \geq d/(2(m-1))$}. There exist
	 $\Cballone>0$ and $\Cballtwo>0$, independent of $h$, $\tau$, and $\delta$, and small enough $\Cwellp$ (relative to $\Cballone$) such that, if the exact solution satisfies
     \begin{equation*}
         |k| \Tnorm{u} \leq \Cwellp
     \end{equation*}
(where $\Tnorm{\cdot}$ is defined in \eqref{def Tnorm u}), then problem~\eqref{eq:space-time-formulation}, that is,
 \begin{equation*} 
	\Bht(\uht, (\wht, \zh)) = \mathcal{L}(\wht, \zh) \quad \forall (\wht, \zh) \in \Wht \times \Vhp,
\end{equation*}
 has a unique solution $\uht \in \Vht$, which satisfies
 \begin{equation*}
     \begin{aligned}
         \Norm{\dpt (u-\uht) }{L^{\infty}(0, T; L^2(\Omega))} 
		\leq&\, \Cballone \left( \tau^m \Norm{u}{m}
		+ h^{\ell + 1} \Norm{u}{\ell}\right), \\
		 \Norm{\nabla (u-\uht)}{L^{\infty}(0, T; L^2(\Omega)^d)} 
		\leq&\, \Cballtwo \left(\tau^m \|u\|_{m,\tau}+ h^{\ell} \|u\|_{\ell,h}\right).
     \end{aligned}
 \end{equation*}
\end{theorem}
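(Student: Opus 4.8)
The plan is to obtain the statement as a direct application of Banach's fixed-point theorem to the mapping $\calT : \ball \to \Vht$ introduced above, with Propositions~\ref{prop: selfmapping} and~\ref{prop: contractivity} supplying the two required properties. First I would choose $\Cwellp$ small enough, relative to $\Cballone$, that the single smallness hypothesis $|k|\,\Tnorm{u} \le \Cwellp$ implies both the hypothesis \eqref{assumpt selfmapping} of Proposition~\ref{prop: selfmapping} and the hypothesis $|k|\,T^{3/2}\,\Tnorm{u} \le \Cwellptwo$ of Proposition~\ref{prop: contractivity}; this is legitimate because every norm of $u$ appearing in those hypotheses is controlled by $\Tnorm{u}$ from \eqref{def Tnorm u}, up to the fixed quantities $T$, $\Cwellpone$, $\Cwellptwo$. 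With $\Cballone$ and $\Cballtwo$ calibrated as prescribed in Proposition~\ref{prop: selfmapping}, the set $\ball$ is nonempty (it contains $\Piht u$); being the intersection of an affine subspace of the finite-dimensional space $\Vht$ with two sublevel sets of continuous convex functionals, it is closed, bounded, and convex, hence complete for the metric induced by $\Tnormd{\cdot}$ (and, $\Vht$ being finite-dimensional, by any norm).

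Next I would simply invoke the two propositions: Proposition~\ref{prop: selfmapping} gives that $\calT$ is well defined on $\ball$ and maps $\ball$ into itself, while Proposition~\ref{prop: contractivity} gives that $\calT$ is a strict contraction on $\ball$ in the $\Tnormd{\cdot}$-metric. Banach's fixed-point theorem then yields a unique $\uht \in \ball$ with $\calT\uht = \uht$. To identify this fixed point with a solution of \eqref{eq:space-time-formulation}, I would observe that $\calT\uht = \uht$ means precisely that $\uht$ solves the linearized problem \eqref{eq:linearized-space-time-formulation} with $\alphaht = \uht$, $\mu = 0$, and $\xi = 0$; substituting $\alphaht = \uht$ turns $\Bhtt(\cdot, \cdot)$ into $\Bht(\cdot, \cdot)$ and $\mathcal{L}_{\delta}(\cdot)$ into $\mathcal{L}(\cdot)$, so $\uht$ solves \eqref{eq:space-time-formulation}. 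The two claimed \emph{a priori} bounds are then exactly the defining inequalities of the membership $\uht \in \ball$.

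For uniqueness in the whole of $\Vht$, I would argue that every solution $\uht^{\star} \in \Vht$ of \eqref{eq:space-time-formulation} must in fact lie in $\ball$. Any such $\uht^{\star}$ is a fixed point of $\calT$ (in the extended sense that it solves \eqref{eq:linearized-space-time-formulation} with $\alphaht = \uht^{\star}$, $\mu=0$, $\xi=0$); to place it in $\ball$ one needs to know that $\uht^{\star}$ inherits Assumption~\ref{asm:nondegeneracy} and the bound \eqref{eq:bound-alpha-W-1-infty}, and then the error bounds \eqref{eq:error-dpt}--\eqref{eq:error-nabla} with $\alphaht = \uht^{\star}$ close the estimate exactly as in Proposition~\ref{prop: selfmapping}. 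Once $\uht, \uht^{\star} \in \ball$, Proposition~\ref{prop: contractivity} with $\alphahtone = \uht$ and $\alphahttwo = \uht^{\star}$ gives $\Tnormd{\uht - \uht^{\star}} \le \kappa\,\Tnormd{\uht - \uht^{\star}}$ with $\kappa < 1$, forcing $\uht = \uht^{\star}$.

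The main obstacle is not in this theorem but upstream: the two genuinely delicate points --- propagating the nondegeneracy and the $L^1(0,T;\Linf)$-control of $\dpt\alphaht$ to every fixed-point iterate (which is what forces $\tau \lesssim h^{d/(2m)}$), and absorbing the second-time-derivative term $\sum_{n=1}^N \Norm{\dptt \uhtone}{L^1(\In; \Linf)}$ appearing in the contraction estimate (which is what forces the narrower window $h^{\ell+1-d/2} \lesssim \tau \lesssim h^{d/(2(m-1))}$ together with $\ell + 1 - d/2 \ge d/(2(m-1))$) --- have already been dispatched in Propositions~\ref{prop: selfmapping} and~\ref{prop: contractivity}. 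Within the proof of the present theorem the one point needing care is the bootstrap in the uniqueness step, i.e.\ verifying that an \emph{a priori} arbitrary discrete solution $\uht^{\star}$ still satisfies Assumption~\ref{asm:nondegeneracy} and \eqref{eq:bound-alpha-W-1-infty}; this is expected to follow by the same inverse-estimate arguments used for iterates in $\ball$, using only that $|k|\,\Tnorm{u}$ is small and $\tau \lesssim h^{d/(2m)}$, but it must be checked rather than read off directly from the two propositions.
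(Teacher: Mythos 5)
Your proposal follows exactly the paper's route: the published proof of Theorem~\ref{thm: h tau conv nonlinear} is the single sentence that the statement follows from Propositions~\ref{prop: selfmapping} and~\ref{prop: contractivity} and Banach's fixed-point theorem, and your argument is precisely that, spelled out (completeness of $\ball$, identification of the fixed point with a solution of~\eqref{eq:space-time-formulation}, and the two error bounds read off from membership in $\ball$). The one place you go beyond the paper is the uniqueness-in-all-of-$\Vht$ bootstrap: Banach only yields uniqueness within $\ball$, the paper does not address the gap, and your flagged verification that an arbitrary discrete solution inherits Assumption~\ref{asm:nondegeneracy} and~\eqref{eq:bound-alpha-W-1-infty} is indeed the step that would be needed to upgrade it --- you are right that it is not supplied by the two propositions as stated.
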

\begin{proof}
   The statement follows by Propositions~\ref{prop: selfmapping} and~\ref{prop: contractivity} and Banach's fixed-point theorem.
\end{proof}

\begin{remark}[Assumption on~$\tau$]
The assumption~$h^{\ell + 1 - \frac{d}{2}} \lesssim \tau \lesssim h^{\frac{d}{2{(m-1)}}}$ determines just the scaling of~$\tau$, but it is not a CFL condition in the classical sense, as it is only necessary that the hidden constants are independent of~$h$, $\tau$, and~$\delta$.
For~$d = 3$ and the minimal regularity~$\ell = 2$ and~$m = 2$, such an assumption becomes~${\tau \simeq h^{\frac{3}{{2}}}}$.
Moreover,
it relaxes for higher approximations in space and time and sufficiently smooth solutions. Note however, that the hidden constants in $h^{\ell + 1 - \frac{d}{2}} \lesssim \tau \lesssim h^{\frac{d}{2{(m-1)}}}$  influence the smallness needed from the exact data; see, e.g., \eqref{est alpht} and \eqref{bound utt discrete}.

We also note that, since the need for the condition $h^{\ell + 1 - d/2} \lesssim \tau$ arises in the proof of contractivity, we expect that it might still be possible to show existence of accurate solutions without this resctriction by resorting to Schauder's fixed-point theorem instead of Banach's theorem; see, e.g.,~\cite[Thm. 4.1]{Nikolic:2023} for similar arguments used in the analysis of a finite element semi-discretization of Westervelt's equation with nonlocal damping.
\eremk
\end{remark}

\section{The vanishing dissipation parameter analysis} \label{sec:vanishing delta analysis}
In this section, we discuss the behavior of discrete solutions as $\delta \searrow 0$. To emphasize the dependence on $\delta$, we use here $u^{(\delta)}$ to denote the exact solution to the Westervelt IBVP for $\delta \in (0,\overline{\delta}]$ and $u^{(0)}$ to denote the exact solution to the inviscid problem. It is known that, as the dissipation vanishes, the exact dissipative solutions converge in the energy norm to the exact solution to the inviscid problem at a linear rate; see~\cite[Thm.~5.1]{kaltenbacher2022parabolic}. In this section, we establish sufficient conditions under which this asymptotic behavior is preserved in the present discrete setting.\\
\indent Consider the family $\{\uhtdelta\}_{\delta \in (0,\bdelta]}$ of solutions to
\begin{equation} \label{nonlinear delta problem}
	\Bht(\uhtdelta, (\wht, \zh)) = \mathcal{L}(\wht, \zh) \quad \forall (\wht, \zh) \in \Wht \times \Vhp.
\end{equation}
Denote the discrete solution corresponding to $\delta=0$ (that is, of the inviscid discrete problem) by $\uhtzero$. We have the following result that relates the two discrete problems.
\begin{theorem} \label{thm: delta conv nonlinear}
    Let the assumptions of Theorem~\ref{thm: h tau conv nonlinear} hold and let $\delta \in (0,\bdelta]$. 
    {Let also~$\tau \lesssim h^{\vartheta}$ with~$\vartheta = \max\{\frac{d}{2(m-1)},\, \frac{1}{m}\}$.}
    Then the family of discrete solutions $\{\uhtdelta\}_{\delta \in (0,\bdelta]}$ of \eqref{nonlinear delta problem} converges to the solution to the discrete inviscid problem at a linear rate {as follows}:
\begin{equation*} 
\begin{aligned}
\begin{multlined}[t]\Norm{\dpt (\uhtdelta-\uhtzero)}{L^{\infty}(0, T; L^2(\Omega))} +  \Norm{\nabla (\uhtdelta-\uhtzero)}{L^{\infty}(0, T; L^2(\Omega)^d)} + \SemiNorm{\dpt (\uhtdelta-\uhtzero)}{\sf J} \\ \hspace*{4.5cm}
+ \Norm{\nabla (\uhtdelta-\uhtzero)}{L^2(\ST)^d}  \leq C \delta.
\end{multlined}
\end{aligned}
\end{equation*}
\end{theorem}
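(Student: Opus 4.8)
\textbf{Proof proposal for Theorem~\ref{thm: delta conv nonlinear}.}

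The plan is to subtract the two discrete nonlinear problems~\eqref{nonlinear delta problem} (for a fixed $\delta>0$) and its $\delta=0$ counterpart, write an equation for the difference $e_{h,\tau}:=\uhtdelta-\uhtzero$, and then apply the $\delta$-uniform stability estimate~\eqref{eq:strong-continuous-dependence} from Theorem~\ref{thm:strong-continuous-dependence} with the leading coefficient frozen at $1+k\uhtzero$ (so that $\alphaht=\uhtzero$; note that by Theorem~\ref{thm: h tau conv nonlinear} both $\uhtzero,\uhtdelta\in\ball$, so Assumption~\ref{asm:nondegeneracy} and the bound~\eqref{eq:bound-alpha-W-1-infty} on $\partial_t\uhtzero$ hold, the latter being exactly what the extra condition $\tau\lesssim h^{1/m}$ buys us via the inverse-estimate argument of Proposition~\ref{prop: selfmapping}). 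First I would compute
\[
\Bhtt(e_{h,\tau},(\wht,\zh))
= -\,\delta\,\big(\nabla\partial_t\uhtzero,\nabla\wht\big)_{\QT}
 \;-\;\sum_{n=1}^N k\big(\partial_t\big((\uhtzero-\uhtdelta)\,\partial_t\uhtdelta\big),\wht\big)_{\Qn}
 \;+\;\sum_{n=1}^{N-1} k\big((\uhtzero-\uhtdelta)(\cdot,\tn)\,\jump{\partial_t\uhtdelta}_n,\wht(\cdot,\tn^+)\big)_\Omega,
\]
where $\Bhtt$ uses the coefficient $1+k\uhtzero$; here the first term is the genuine $O(\delta)$ forcing coming from the $\delta$-viscosity term evaluated at the inviscid solution, and the remaining two terms carry the nonlinear coefficient mismatch and are \emph{quadratic} in $e_{h,\tau}$ (since $\uhtzero-\uhtdelta=-e_{h,\tau}$). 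Rewriting the last sum via the lifting operator $\mathfrak{L}_\alpha$ from~\eqref{def:L-alpha} (now with $\bar\alpha_{h,\tau}$ replaced by $-e_{h,\tau}$), the right-hand side takes the form treated in the contractivity proof of Proposition~\ref{prop: contractivity}, plus the $\delta$-term.

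Next I would invoke~\eqref{eq:strong-continuous-dependence} with $f$ playing the role of the right-hand side, $u_0=u_1=0$, $\mu=0$, $\xi=0$, to get
\[
\Tnormd{e_{h,\tau}}
\;\lesssim\;
\delta\,\Norm{\nabla\partial_t\uhtzero}{L^2(\QT)^d}
\;+\;\sum_{n=1}^N\Norm{k\,\partial_t\big(e_{h,\tau}\,\partial_t\uhtdelta\big)-\mathfrak{L}_\alpha e_{h,\tau}}{L^1(\In;L^2(\Omega))}.
\]
The first term is bounded by $C\delta$ once we control $\Norm{\nabla\partial_t\uhtzero}{L^2(\QT)^d}$, which follows from $\uhtzero\in\ball$ together with the triangle inequality, Lemma~\ref{lemma:estimates-Piht}, and the regularity of $u^{(0)}$ (here the condition $\tau\lesssim h^{d/(2(m-1))}$ from Theorem~\ref{thm: h tau conv nonlinear} is used exactly as in~\eqref{bound utt discrete}). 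The second (nonlinear) term I would estimate precisely by \emph{reusing} the chain of inequalities from the proof of Proposition~\ref{prop: contractivity} — the $\balphaht\,\partial_t^2\uhtdelta$, $\partial_t\balphaht\,\partial_t\uhtdelta$, and $\mathfrak{L}_\alpha$ bounds — with $\balphaht$ replaced by $-e_{h,\tau}$, which yields
\[
\sum_{n=1}^N\Norm{k\,\partial_t\big(e_{h,\tau}\,\partial_t\uhtdelta\big)-\mathfrak{L}_\alpha e_{h,\tau}}{L^1(\In;L^2(\Omega))}
\;\leq\; C\,|k|\,T^{3/2}\,\Tnorm{u^{(\delta)}}\,(1+\cdots)\;\Tnormd{e_{h,\tau}}
\;\leq\;\tfrac12\,\Tnormd{e_{h,\tau}}
\]
under the same smallness $|k|\,\Tnorm{u}\leq\Cwellp$ already assumed (uniformly in $\delta$, since all the bounds on $u^{(\delta)}$ are $\delta$-uniform by~\cite[Thm.~4.1]{kaltenbacher2022parabolic}). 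Absorbing this half into the left-hand side gives $\Tnormd{e_{h,\tau}}\lesssim\delta$, and since $\Tnormd{\cdot}$ dominates the four norms in the statement (by definition~\eqref{eq:energy-norm-delta}), the claim follows.

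The main obstacle I anticipate is not the $\delta$-term itself but ensuring that the nonlinear absorption step is legitimate \emph{at finite $\tau$}: this requires bounding $\sum_n\Norm{\partial_t^2\uhtdelta}{L^1(\In;\Linf)}$ and $\Norm{\partial_t\uhtdelta}{L^1(0,T;\Linf)}$ (the second-time-derivative term being absent from the energy), which is precisely where the inverse estimates in time and space — and hence the window $h^{\ell+1-d/2}\lesssim\tau\lesssim h^{d/(2(m-1))}$ — enter, together now with the additional requirement $\tau\lesssim h^{1/m}$ coming from controlling $\Norm{\nabla\partial_t\uhtzero}{L^2(\QT)^d}$ through $\tau^m h^{-d/2}\lesssim$ the desired rate; combining these gives the stated $\vartheta=\max\{\tfrac{d}{2(m-1)},\tfrac1m\}$. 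A secondary point of care is that the stability constant in~\eqref{eq:strong-continuous-dependence} must be $\delta$-independent (guaranteed by Theorem~\ref{thm:strong-continuous-dependence}) and that $\alphaht=\uhtzero$ — not $\uhtdelta$ — is the right coefficient to freeze, so that the viscosity term appears cleanly as data rather than being entangled with the coefficient.
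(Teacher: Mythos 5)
Your overall strategy --- subtract the two discrete problems, linearize around one of the two solutions, reuse the contractivity machinery of Proposition~\ref{prop: contractivity} to absorb the nonlinear residual at the cost of a constant smaller than one, and isolate an $O(\delta)$ forcing --- is exactly the paper's. Freezing the coefficient at $1+k\uhtzero$ rather than $1+k\uhtdelta$ (the paper's choice), and keeping $\delta(\nabla\dpt e_{h,\tau},\nabla\wht)_{\QT}$ on the left so that the forcing involves $\uhtzero$ rather than $\uhtdelta$, are both immaterial. (There is also a harmless sign slip: with $e_{h,\tau}=\uhtdelta-\uhtzero$ and the coefficient frozen at $1+k\uhtzero$, the nonlinear residual is $-k\dpt(e_{h,\tau}\,\dpt\uhtdelta)$, not $-k\dpt\big((\uhtzero-\uhtdelta)\dpt\uhtdelta\big)$.)

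The genuine gap is the step where you claim $\Tnormd{e_{h,\tau}}\lesssim\delta\,\Norm{\nabla\dpt\uhtzero}{L^2(\QT)^d}+\cdots$ ``with $f$ playing the role of the right-hand side''. The residual $-\delta(\nabla\dpt\uhtzero,\nabla\wht)_{\QT}$ is not of the form $(f,\wht)_{\QT}$, so it cannot be fed through the $f$-channel of~\eqref{eq:strong-continuous-dependence} as written; the only channel of $\mathcal{L}_{\delta}$ with this structure is the $\mu$-term, and taking $\mu=-\uhtzero$ in~\eqref{eq:strong-continuous-dependence} returns $\Tnormd{e_{h,\tau}}^2\lesssim\overline{\delta}\,\Norm{\nabla\dpt\uhtzero}{L^2(\QT)^d}^2+\cdots$, because one power of $\delta$ in the Young step is necessarily spent absorbing $\tfrac{\delta}{2}\Norm{\nabla\dpt\uht}{L^2(\QT)^d}^2$ into the left-hand side (see the bound on $R_3$ in the proof of Theorem~\ref{thm:strong-continuous-dependence}). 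That yields only $O(\sqrt{\delta})$, not the claimed linear rate. The missing idea is to integrate by parts in space: rewrite the forcing as $\delta(\Deltah\dpt\uhtdelta,\wht)_{\QT}$ via the discrete Laplacian $\Deltah$ and treat it as genuine $L^1(0,T;\Ltwo)$ data paired with $\wht$, so that the full factor $\delta$ survives --- this is exactly what the paper does. Doing so then forces the bound $\Norm{\Deltah\uhtdelta}{L^{\infty}(0,T;\Ltwo)}\lesssim h^{-1}\Norm{\nabla(\uhtdelta-u^{(\delta)})}{L^{\infty}(0,T;\Ltwod)}+\Norm{\Delta u^{(\delta)}}{L^{\infty}(0,T;\Ltwo)}$ by an inverse estimate, and the membership $\uhtdelta\in\ball$ produces the term $h^{-1}\tau^m\Norm{u^{(\delta)}}{m,\tau}$; this, not the count $\tau^m h^{-d/2}\lesssim1$ that you give (which would yield $\tau\lesssim h^{d/(2m)}$), is the actual source of the extra constraint $\tau\lesssim h^{1/m}$ in $\vartheta=\max\{\tfrac{d}{2(m-1)},\tfrac1m\}$.
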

\begin{proof}
Let $\buht=\uhtdelta-\uhtzero$. Since $(\nabla \buht(\cdot, 0), \nabla \zh)_{\Omega}=0$ and $\big( k \buht \dpt \uhtzero, \wht \big)_{\SO}=0$, this difference solves
\begin{equation}
\label{eq:aux-delta-zero}
	\begin{aligned}
		\begin{multlined}[t]	\Bhtt(\buht, (\wht, 0)) = \delta \big(\Delta_h \dpt \uhtdelta, \wht \big)_{\QT} - \big(k \dpt \buht \dpt \uhtzero + k  \buht \dpt^2 \uhtzero, \wht \big)_{\QT}
			\\ + \big(\La \uhtzero, \wht\big)_{\QT} \qquad {\forall \wht \in \Wht},
		\end{multlined}
	\end{aligned}
\end{equation}
where~$\Delta_h: H_0^1(\Omega) \to \Vhp$
is the discrete Laplacian operator, defined for any~$v \in H_0^1(\Omega)$ as follows: 
\[
(-\Delta_h v, \zh)_{\Omega} = (\nabla v, \nabla \zh)_{\Omega} \qquad   \forall \zh \in \Vhp.
\]
The last two terms on the right-hand side of~\eqref{eq:aux-delta-zero} can be treated as in the proof of Proposition~\ref{prop: contractivity}.  
Therefore, we obtain
\begin{equation*}
    \Tnormd{\buht} \lesssim \delta \Norm{\Delta_h \uhtdelta}{L^1(0,T; \Ltwo)},
\end{equation*}
where $C>0$ does not depend on $\delta$, $h$ or~$\tau$. The right-hand side term above remains to be estimated. To treat it, we use the following inverse estimate: 
\begin{equation*}
    \Norm{\Delta_h v}{\Ltwo} \lesssim h^{-1} \Norm{\nabla v}{\Ltwod} \qquad \forall v \in H_0^1(\Omega),
\end{equation*}
together with the stability bound~$\Norm{\Delta_h v}{\Ltwo} \le \Norm{\Delta v}{\Ltwo}$ for all~$v \in H_0^1(\Omega) \cap H^2(\Omega)$. Thus,
\begin{alignat*}{3}
\delta \Norm{\Delta_h \uhtdelta}{L^1(0, T; \Ltwo)} & \le \delta T \Norm{\Delta_h \uht}{L^{\infty}(0, T; \Ltwo)} \\
& \lesssim \delta T \big(\Norm{\Delta_h (\uhtdelta - u^{(\delta)})}{L^{\infty}(0, T; \Ltwo)} + \Norm{\Delta_h u^{(\delta)}}{L^{\infty}(0, T; \Ltwo)} \big) \\
& \lesssim \delta T \big(h^{-1} \Norm{\nabla (\uhtdelta - u^{(\delta)}}{L^{\infty}(0, T; \Ltwo^d)} + \Norm{\Delta u^{(\delta)}}{L^{\infty}(0, T; \Ltwo)} \big).
\end{alignat*}
Since $\uhtdelta \in \ball$, we then have
\begin{alignat*}{3}
\delta \Norm{\Delta_h \uhtdelta}{L^1(0, T; \Ltwo)} \lesssim \delta T {h^{-1}}\big(\tau^m \Norm{u^{(\delta)}}{m, \tau} + h^{\ell} \Norm{u^{(\delta)}}{\ell, h} + \Norm{u^{(\delta)}}{L^{\infty}(0, T; H^2(\Omega))}\big),
\end{alignat*}
where the hidden constant does not depend on~$h$, $\tau$, or~$\delta$. The statement then follows by {using the assumption~$\tau \lesssim h^{1/m}$ and} noting that
    \begin{equation*}
\begin{aligned}
\begin{multlined}[t]\Norm{\dpt \buht}{L^{\infty}(0, T; L^2(\Omega))} +  \Norm{\nabla \buht}{L^{\infty}(0, T; L^2(\Omega)^d)} + \SemiNorm{\dpt \buht}{\sf J} 
+ \Norm{\nabla \buht}{L^2(\ST)^d}  \lesssim \Tnormd{\buht}.
\end{multlined}
\end{aligned}
\end{equation*}
\end{proof}
{We note that $\tau \lesssim h^{1/m}$ is a more restrictive assumption than $\tau \lesssim h^{\frac{d}{2(m-1)}}$ only in a one-dimensional setting, that is, for $d=1$. The present theory is, however, not optimized for that setting anyway, as there we can use, for example, the embedding $H^1(\Omega) \hookrightarrow L^\infty(\Omega)$ in place of inverse estimates in space in Section~\ref{Sec: error analysis nonlinear}.}
\section{Numerical experiments} \label{sec:numerical-results}
In this section, we validate our theoretical results on the~$(h, \tau)$-convergence in Theorem~\ref{thm: h tau conv nonlinear}, and the~$\delta$-convergence in Theorem~\ref{thm: delta conv nonlinear}. Moreover, we numerically assess the convergence of the~$(p, q)$-version of the proposed method.

An object-oriented MATLAB implementation of the space--time method in~\eqref{eq:space-time-formulation} for~$(2 + 1)$-dimensional problems
was developed to carry out the numerical experiments\footnote{The code used for the numerical tests are available in the GitHub repository~\cite{Gomez:code}.}.

\subsection{Implementation details}
Since no continuity in time is required in the test space~$\Wht$, the space--time formulation~\eqref{eq:space-time-formulation} reduces to solving the following sequence of nonlinear systems of equations on each time slab: 
\begin{itemize}
\item For~$n = 1$, $\uht(\cdot, 0) = \Rh u_0$ and
\begin{equation}
\label{eq:nonlinear-systems-1}
\begin{split}
\big(\dpt (( 1 + k \uht) & \dpt \uht), \wht\big)_{Q_1} + \big((1 + k \uht) \dpt \uht, \wht\big)_{\SO} \\
+ c^2 & (\nabla \uht, \nabla \wht)_{Q_1} 
+ \delta(\nabla \dpt \uht, \nabla \wht)_{Q_1}\\
& \qquad = (f, \wht)_{Q_1} + ((1 + k u_0) u_1, \wht(\cdot, 0))_{\Omega} \quad \forall \wht \in \Pp{q-1}{I_1; \Vhp}.
\end{split}
\end{equation}
\item For~$n = 2, \ldots, N$, $\uht(\cdot, \tnmo^+) = \uht(\cdot, \tnmo^-)$ and
\begin{equation}
\label{eq:nonlinear-systems-2}
\begin{split}
& \big(\dpt (( 1 + k \uht) \dpt \uht), \wht\big)_{\Qn} + \big((1 + k \uht(\cdot, \tnmo)) \dpt \uht(\cdot, \tnmo^+), \wht(\cdot, \tnmo^+)\big)_{\Omega} \\
& + c^2 (\nabla \uht, \nabla \wht)_{\Qn} 
+ \delta(\nabla \dpt \uht, \nabla \wht)_{\Qn}\\
& = (f, \wht)_{\Qn} + \big((1 + k \uht(\cdot, \tnmo)) \dpt \uht(\cdot, \tnmo^-), \wht(\cdot, \tnmo^+)\big)_{\Omega} \quad \forall \wht \in \Pp{q-1}{I_n; \Vhp}.
\end{split}
\end{equation}
\end{itemize}

Moreover, inspired by the {linearized fixed-point iteration}
proposed in~\cite[\S5.4.2]{kaltenbacher2007numerical}  (used there to solve the nonlinear problems stemming from the Newmark scheme applied to the Westervelt equation), we approximate the solution to the nonlinear systems of equations~\eqref{eq:nonlinear-systems-2}
using the following fixed-point iteration:
for all~$\wht \in \Pp{q-1}{I_n; \Vhp}$,
\begin{equation}
\label{eq:nonlinear-solver}
\begin{split}
& \big(\dptt \uht^{(s + 1)}, \wht\big)_{\Qn} + \big(\dpt \uht^{(s + 1)}(\cdot, \tnmo^+), \wht(\cdot, \tnmo^+)\big)_{\Omega} \\
& + c^2 (\nabla \uht^{(s + 1)}, \nabla \wht)_{\Qn} 
+ \delta(\nabla \dpt \uht^{(s + 1)}, \nabla \wht)_{\Qn}\\
& \qquad = (f, \wht)_{\Qn} + \big((1 + k \uht(\cdot, \tnmo)) \dpt \uht(\cdot, \tnmo^-), \wht(\cdot, \tnmo^+)\big)_{\Omega} \\
& \qquad\quad  - k \big(\dpt(\uht^{(s)} \dpt \uht^{(s)}), \wht\big)_{\Qn} - k \big(\uht(\cdot, \tnmo) \dpt \uht^{(s)}(\cdot, \tnmo^+), \wht(\cdot, \tnmo^+) \big)_{\Omega},
\end{split}
\end{equation}
whose associated linear system of equations has a unique solution due to Remark~\ref{rem:well-posedness-linearized-problem}. The nonlinear system~\eqref{eq:nonlinear-systems-1} is solved analogously.

In the numerical experiments below, for the fixed-point iteration~\eqref{eq:nonlinear-solver}, we use as initial guess~$\uht^{(0)}$ the solution to problem~\eqref{eq:nonlinear-systems-2} for~$k = 0$. Moreover, we set a maximum number of linear iterations~$s_{\max} = 15$ and a tolerance~$tol = 10^{-12}$.

\subsection{\texorpdfstring{$h$}{h}-convergence\label{sect:h-convergence}}
We consider a manufactured~$(2 + 1)$-dimensional problem on the space--time domain~$\QT = (0, 1)^2 \times (0, 1)$ with~$k = 0.5$, $c = 1$, $\delta = 6\times 10^{-9}$, homogeneous Dirichlet boundary conditions, and initial data and source term chosen so that the exact solution to~\eqref{eq:Westervelt-IBVP} is given by
\begin{equation}
\label{eq:smooth-sol}
u(x, y, t) = A \sin(\omega t) \sin(\ell x) \sin (\ell y),
\end{equation}
where~$A = 10^{-2}$, $\omega = \pi/3$, and~$\ell = \pi$.

In order to numerically assess the~$h$-convergence rates of the proposed scheme, we use approximations in time of degree~$q = 5$ and a fixed partition of the time interval with uniform time step~$\tau = 2\times 10^{-1}$, so as to let the spatial error dominate. 
In Figure~\ref{fig:h-convergence}, we show (in \emph{log-log} scale) the results obtained for a set of structured simplicial meshes and approximations of degree~$p = 1, 2, 3$. Optimal convergence rates of order~$\mathcal{O}(h^{p})$ and~$\mathcal{O}(h^{p + 1})$ are observed, respectively, for the following errors:
\begin{equation}
\label{eq:errors-numerical-exp}
\Norm{\dpt(u - \uht)}{L^{\infty}(0, T; L^2(\Omega))} \quad \text{ and } \quad \Norm{\nabla(u - \uht)}{L^{\infty}(0, T; L^2(\Omega)^2)},
\end{equation}
which are approximated by taking the maximum of the errors measured on uniformly distributed points on each time interval. 
\begin{figure}[!htb]
\centering
\includegraphics[width = 2.8in]{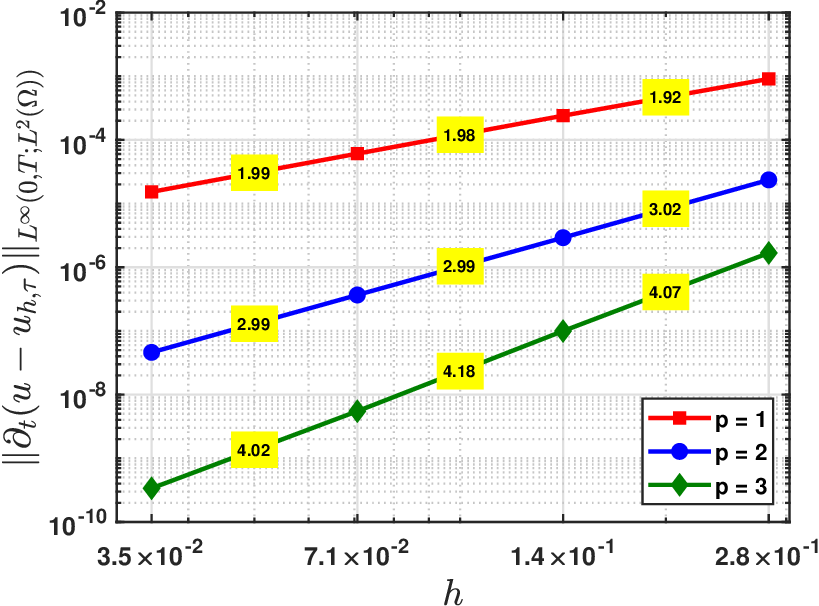}
\hspace{0.3in}
\includegraphics[width = 2.8in]{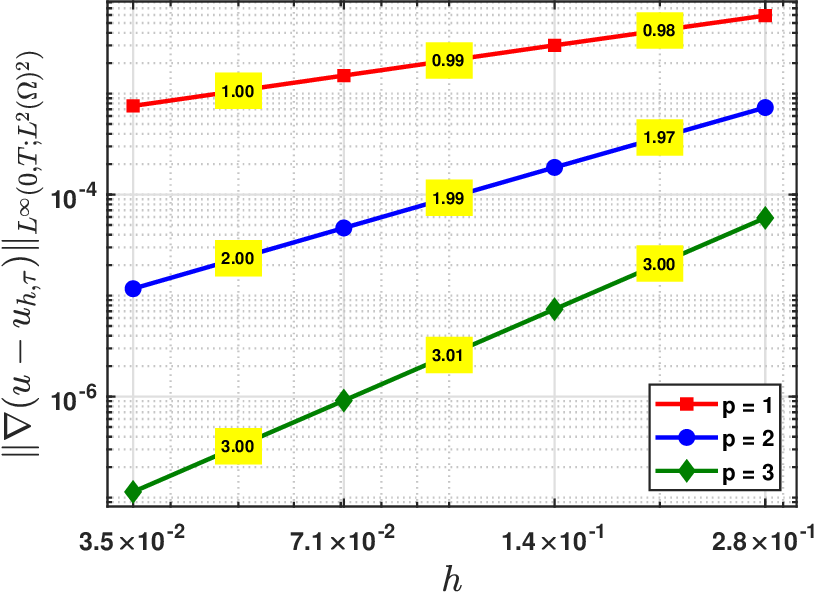}
\caption{$h$-convergence (in \emph{log-log} scale) of the errors in~\eqref{eq:errors-numerical-exp} for the problem in Section~\ref{sect:h-convergence} with exact solution~$u$ in~\eqref{eq:smooth-sol} with~$A = 10^{-2}$, $\omega = \pi/3$, and~$\ell = \pi$. The numbers in the yellow rectangles are the empirical convergence rates.
\label{fig:h-convergence}}
\end{figure}

\subsection{\texorpdfstring{$\tau$}{tau}-convergence\label{sect:tau-convergence}}
We now assess the~$\tau$-convergence of the scheme. To do so, we consider the problem from the previous section with exact solution given by~$u$ in~\eqref{eq:smooth-sol} with~$A = 10^{-2}$, $\omega = 9\pi/2$, and~$\ell 
= \pi$. 
For this value of~$\omega$, the solution~$u$ oscillates faster in time, thus increasing the influence of the temporal error.

We use approximations in space of degree~$p = 6$, and a fixed coarse spatial mesh with~$h \approx 2.82 \times 10^{-1}$. In Figure~\ref{fig:tau-convergence}, we show (in \emph{log-log} scale) the results obtained for a set of uniform partitions of the time interval with~$\tau = 0.5 \times 2^{-i}$, $i = 1, \ldots, 4$, and approximations of degree~$q = 2, 3, 4$. Optimal convergence rates of order~$\mathcal{O}(\tau^{q})$ and~$\mathcal{O}(\tau^{q + 1})$ are observed, respectively, for the errors in~\eqref{eq:errors-numerical-exp}.
\begin{figure}[!htb]
\centering
\includegraphics[width = 2.8in]{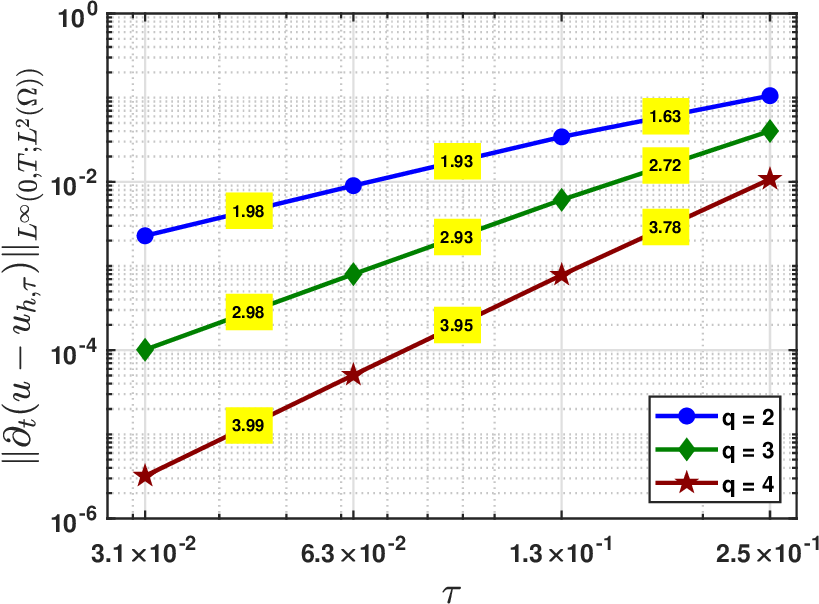}
\hspace{0.3in}
\includegraphics[width = 2.8in]{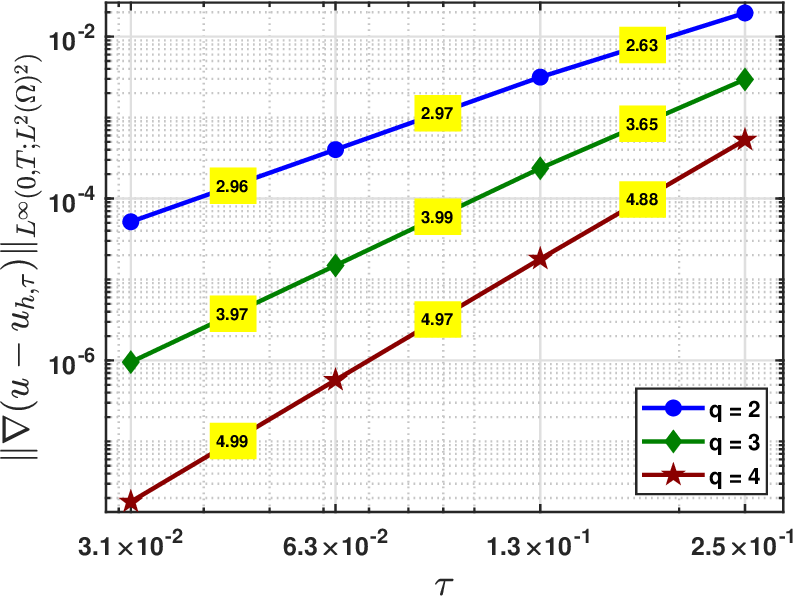}
\caption{$\tau$-convergence (in \emph{log-log} scale) of the errors in~\eqref{eq:errors-numerical-exp} for the problem in Section~\ref{sect:tau-convergence} with exact solution~$u$ in~\eqref{eq:smooth-sol} with~$A = 10^{-2}$, $\omega = 9\pi/2$, and~$\ell = \pi$.
\label{fig:tau-convergence}}
\end{figure}


\subsection{Assessment of the constraints on space--time meshes}
We now assess numerically the necessity of assumption~$h^{\ell + 1 - \frac{d}{2}} \lesssim \tau \lesssim h^{\frac{d}{2(m-1)}}$ in Theorem~\ref{thm: h tau conv nonlinear}, which, for~$d = 2$, $\ell = p$, and~$m = q$, simplifies to~$h^p \lesssim \tau \lesssim h^{\frac{1}{q-1}}$. To this end, we study both constraints~$h^p \lesssim \tau$ and~$\tau \lesssim h^{\frac{1}{q-1}}$ separately.

We first consider the test case in Section~\ref{sect:h-convergence}, using a final time~$T = 100$, time step~$\tau  = 20$, and degree of approximation in time~$q = 2$. In Figure~\ref{fig:h-convergence-CFL}, we present the results obtained for various spatial meshes and degrees of approximation in space~$p = 1,\, 2,\, 3$, where we observe that the errors remain bounded even when  the constraint~$\tau \lesssim h$ is not satisfied.

Similarly, we consider the test case in Section~\ref{sect:tau-convergence}, with final time~$T = 1$, a coarse spatial mesh, and degree of approximation in space~$p = 1$. In Figure~\ref{fig:tau-convergence-CFL}, we show the results obtained for a set of time partitions and degrees of approximation in time~$q = 2,\, 3,\, 4$, which show that the errors remain bounded even when the constraint~$h \lesssim \tau$ is not satisfied. 
\begin{figure}[!htb]
\centering
\includegraphics[width = 2.8in]{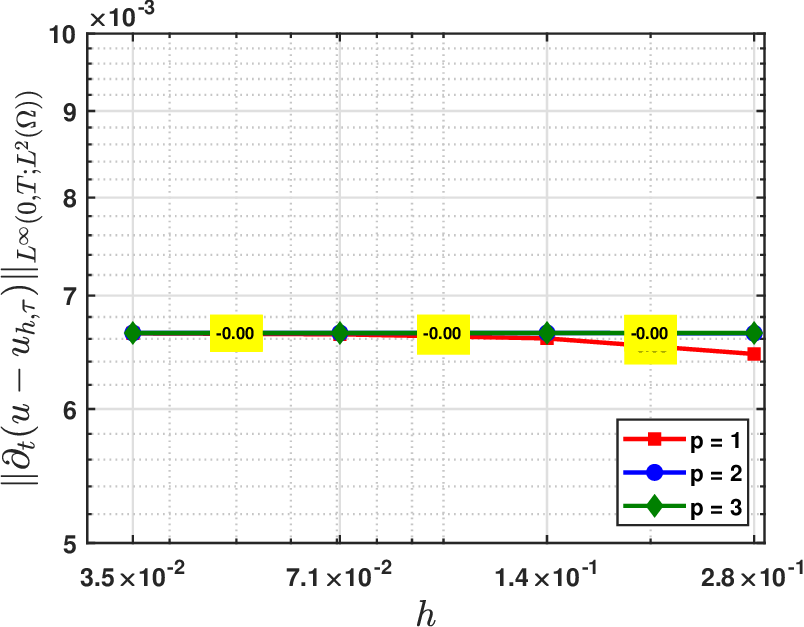}
\hspace{0.3in}
\includegraphics[width = 2.8in]{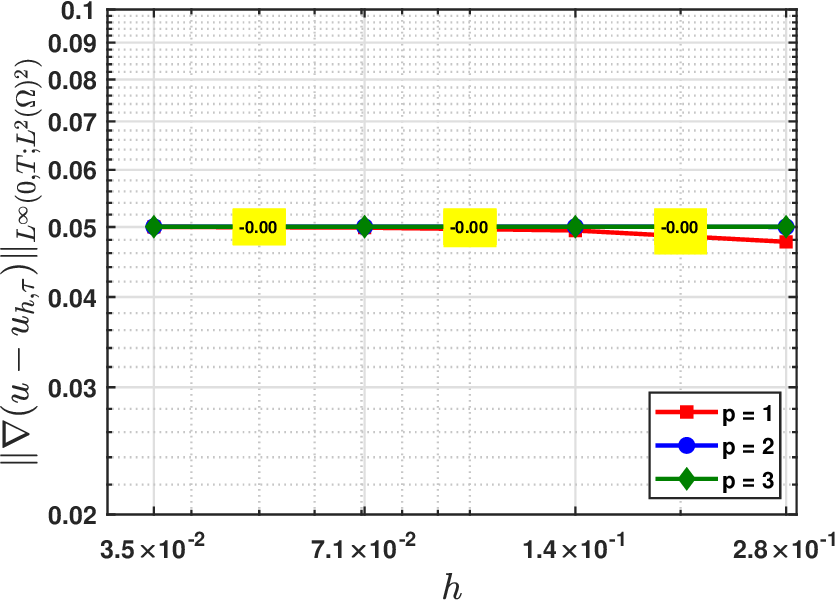}
\caption{
{Behavior (in \emph{log-log} scale) of the errors in~\eqref{eq:errors-numerical-exp} for the problem in Section~\ref{sect:h-convergence} with final time~$T = 100$. The results correspond to~$q = 2$ and $\tau = 20$.}
\label{fig:h-convergence-CFL}}
\end{figure}

\begin{figure}[!htb]
\centering
\includegraphics[width = 2.8in]{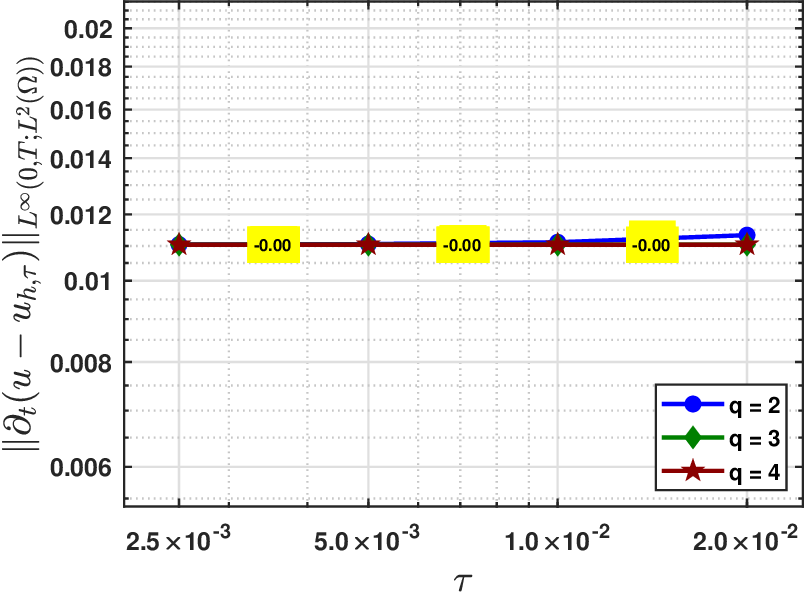}
\hspace{0.3in}
\includegraphics[width = 2.8in]{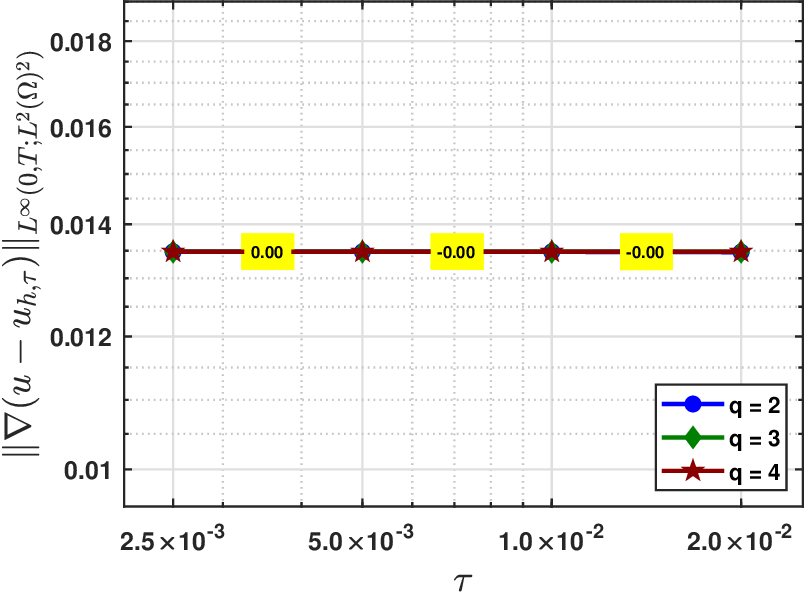}
\caption{
{Behavior (in \emph{log-log} scale) of the errors in~\eqref{eq:errors-numerical-exp} for the problem in Section~\ref{sect:tau-convergence} with final time~$T = 1$.
The results correspond to~$p = 1$ and the coarsest spatial mesh. }
\label{fig:tau-convergence-CFL}}
\end{figure}

{These numerical results suggest that the present theoretical constraint on the space--time meshes may be too restrictive, at least for solution fields of the type in~\eqref{eq:smooth-sol} that do not exhibit nonlinear effects, such as steepening of the wavefront.}

\subsection{\texorpdfstring{$(p, q)$}{(p,q)}-convergence\label{sect:p-convergence}}
Taking advantage of the simultaneous high-order convergence in space and time provided by the proposed method, we numerically assess the~$(p, q)$-version of the method. More precisely, given a fixed space--time mesh with~$h = \sqrt{2} \tau \approx 2.82 \times 10^{-1}$, we increase the approximation degrees in space and time; for simplicity, we set~$q = p$.
In Figure~\ref{fig:p-convergence}, we show (in \emph{semilogy} scale) the errors obtained using approximations of degree~$p = 2, \ldots, 9$ for the problem in Section~\ref{sect:tau-convergence}, and observe exponential convergence of order~$\mathcal{O}(e^{-b \sqrt[3]{N_{\mathrm{DoFs}}}})$ for the errors in~\eqref{eq:errors-numerical-exp}, where~$N_{\mathrm{DoFs}}$ denotes the total number of degrees of freedom. Such an exponential convergence was also observed in~\cite[\S4.1]{Dong_Mascotto_Wang:2024} for the DG--CG scheme applied to the linear wave equation, and it follows the expected decay for analytical solutions with~$p$-FEM approximations~\cite{Schwab:1998}.
\begin{figure}[!htb]
\centering
\includegraphics[width = 2.8in]{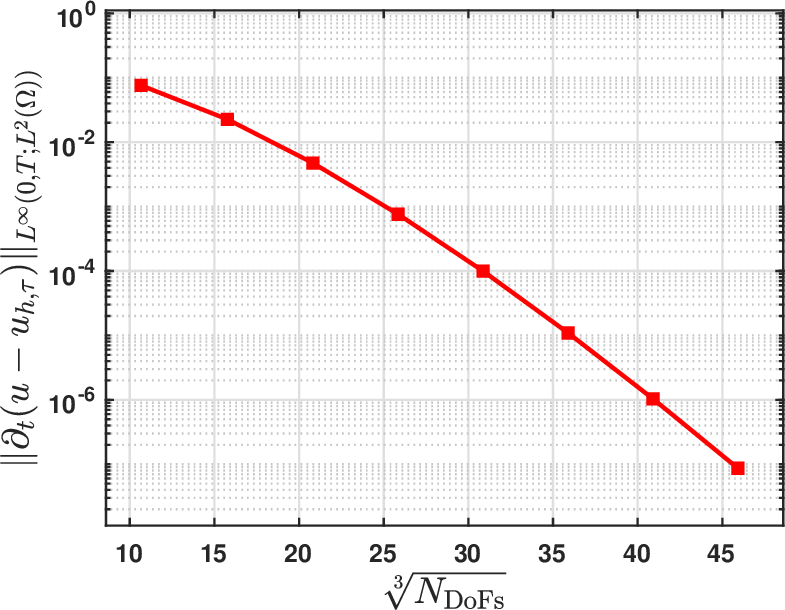}
\hspace{0.3in}
\includegraphics[width = 2.8in]{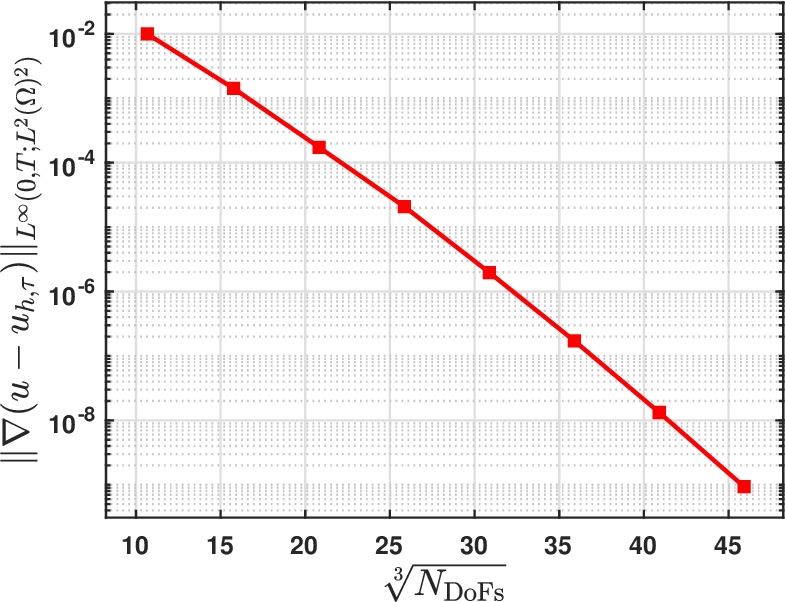}
\caption{$p$-convergence (in~\emph{semilogy} scale) of the errors in~\eqref{eq:errors-numerical-exp} for the problem in Section~\ref{sect:p-convergence} with exact solution~$u$ in~\eqref{eq:smooth-sol} with~$A = 10^{-2}$, $\omega = 9\pi/2$, and~$\ell = \pi$.
\label{fig:p-convergence}}
\end{figure}

\subsection{\texorpdfstring{$\delta$}{delta}-convergence}
We now validate the~$\delta$-convergence of the discrete solution to the inviscid discrete solution proven in Theorem~\ref{thm: delta conv nonlinear}.
To do so, we just have to compute the difference~$\uht^{(\delta)} - \uht^{(0)}$, so it is not necessary to know the exact solution of the problem. 
\subsubsection{Homogeneous source term\label{sect:zero-f}}
We first consider the Westervelt equation~\eqref{eq:Westervelt-IBVP} on the space--time domain~$\QT = (0, 1)^2 \times (0, 1)$ with~$k = 0.3$, $c = 1$, homogeneous Dirichlet boundary conditions and source term, and initial data given by
\begin{equation}
\label{eq:initial-data-zero-f}
u_0(x, y) = 10^{-2} \sin(\pi x) \sin(\pi y) \quad \text{ and } \quad u_1(x, y) = \sin(\pi x) \sin(\pi y).
\end{equation}

We set a space--time mesh with~$h = \sqrt{2} \tau \approx 1.41 \times 10^{-1}$, and use approximations in time of degree~$q = 4$. In Figure~\ref{fig:delta-convergence-f=0}, we show (in \emph{log-log} scale) the decay of the following errors for~$\delta = 10^{-2i}$, $i = 1, \ldots, 5$, and~$p = 1, 2, 3$:
\begin{equation}
\label{eq:delta-errors}
\Norm{\dpt (\uht^{(\delta)} - \uht^{(0)}) }{L^{\infty}(0, T; \Ltwo)} \quad \text{ and } \quad \Norm{\nabla (\uht^{(\delta)} - \uht^{(0)}) }{L^{\infty}(0, T; \Ltwo^2)},
\end{equation}
where we observe the expected linear convergence in~$\delta$ predicted by Theorem~\ref{thm: delta conv nonlinear}.
\begin{figure}[!htb]
    \centering
    \includegraphics[width=2.8in]{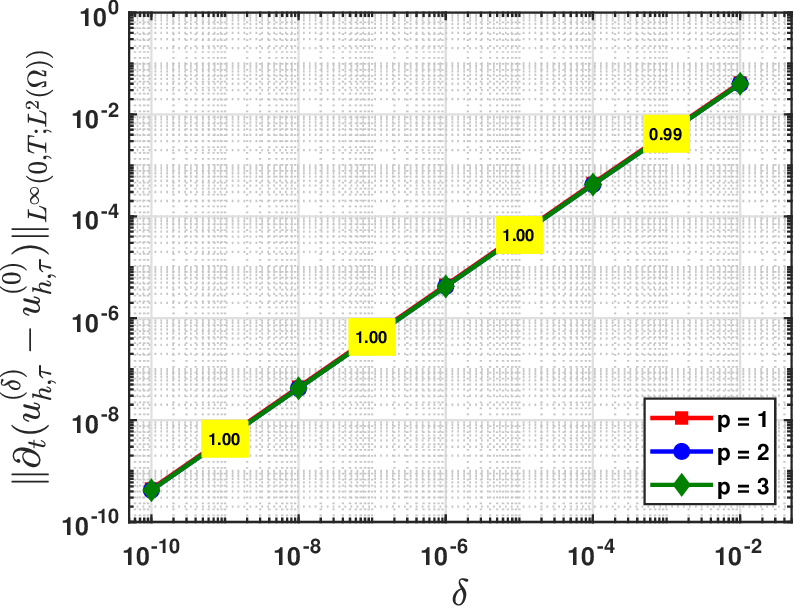}
    \hspace{0.3in}
    \includegraphics[width=2.8in]{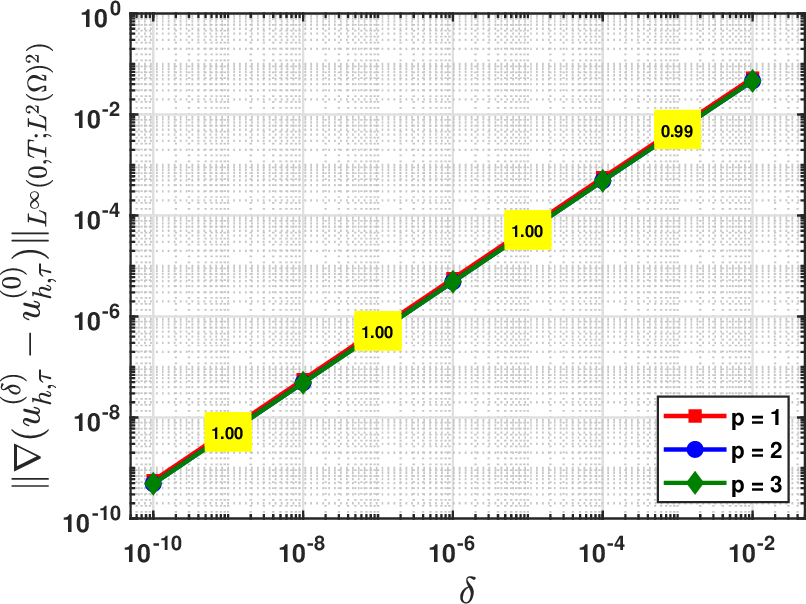}
    \caption{$\delta$-convergence (in~\emph{log-log} scale) for the problem in Section~\ref{sect:zero-f} with~$f = 0$ and initial data in~\eqref{eq:initial-data-zero-f}.}
    \label{fig:delta-convergence-f=0}
\end{figure}

\subsubsection{Nonhomogeneous source term\label{sect:non-zero-f}}
Finally, we consider the Westervelt equation~\eqref{eq:Westervelt-IBVP} on the space--time domain~$\QT = (0, 1) \times (0, 2 \times 10^{-4})$, with~$k = -10$, $c = 2000$, homogeneous Dirichlet boundary conditions and initial data, and a source term given by
\begin{equation}
\label{eq:source-term-delta}
f(x, y, t) = \frac{a}{\sqrt{\sigma}} \exp(-\alpha t) \exp\Big(- \frac{(x - 1/2)^2 + (y - 1/2)^2}{2 \sigma^2}\Big),
\end{equation}
with~$a = 400$, $\alpha = 5 \times 10^4$, and~$\sigma = 3 \times 10^{-2}$. 

We use the same spatial mesh and approximation degrees as in the previous numerical experiment, and set the time step as~$\tau = 10^{-5}$.
In Figure~\ref{fig:snapshots-f-not-zero}, we show the discrete solution corresponding to~$\delta = 6 \times 10^{-9}$ obtained at~$t = 10^{-4}$ and~$t = 2 \times 10^{-4}$, where the formation of a wavefront is observed.
\begin{figure}[!htb]
    \centering
    \includegraphics[width=2.8in]{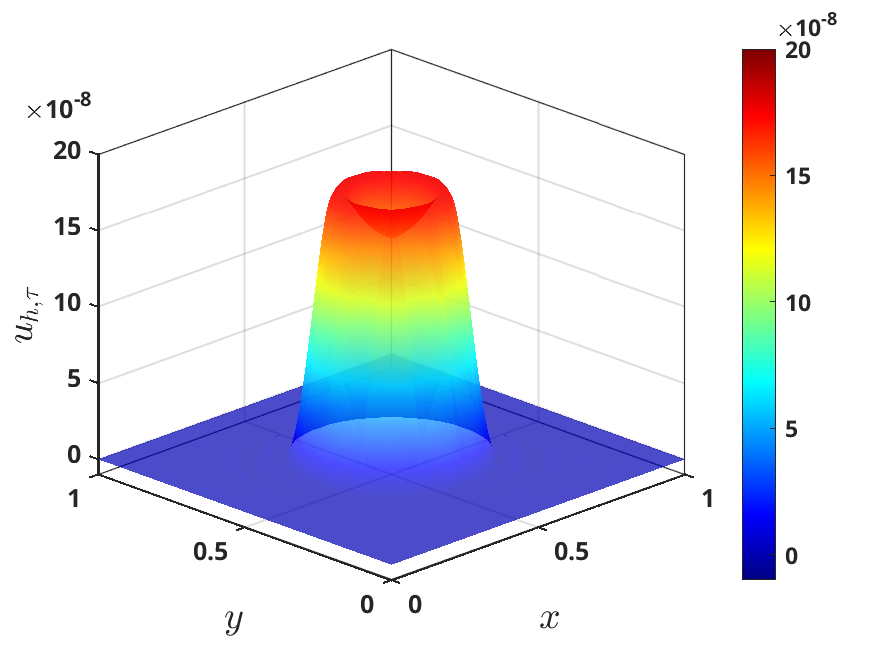}
    \hspace{0.2in}
    \includegraphics[width=2.8in]{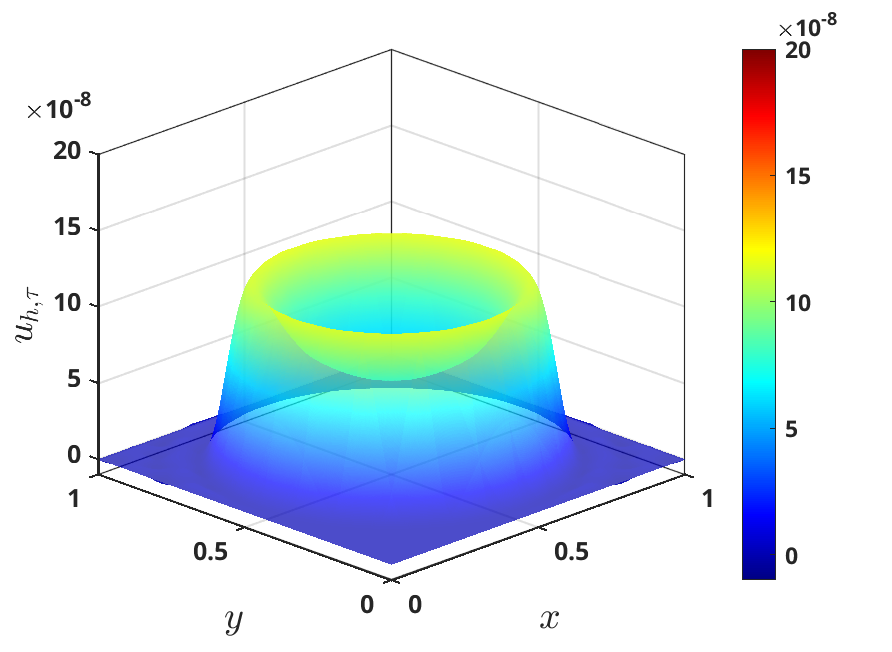}
    \caption{Snapshopts at~$t = 10^{-4}$ and~$t = 2 \times 10^{-4}$ of the discrete solution for the problem in Section~\ref{sect:non-zero-f} with~$\delta = 6 \times 10^{-9}$, zero initial data, and source term~$f$ in~\eqref{eq:source-term-delta}.\label{fig:snapshots-f-not-zero}}
\end{figure}

Since~$f$ does not depend on~$\delta$, convergence to the inviscid discrete solution when~$\delta \to 0^+$ is expected. In Figure~\ref{fig:delta-convergence-f-not-0}, we show the errors in~\eqref{eq:delta-errors} for~$\delta = 10^{-2i}$, $i = 1, \ldots, 5$, and~$p = 1, 2, 3$, and observe convergence rates of order~$\mathcal{O}(\delta)$, as expected.
\begin{figure}[!htb]
    \includegraphics[width=2.8in]{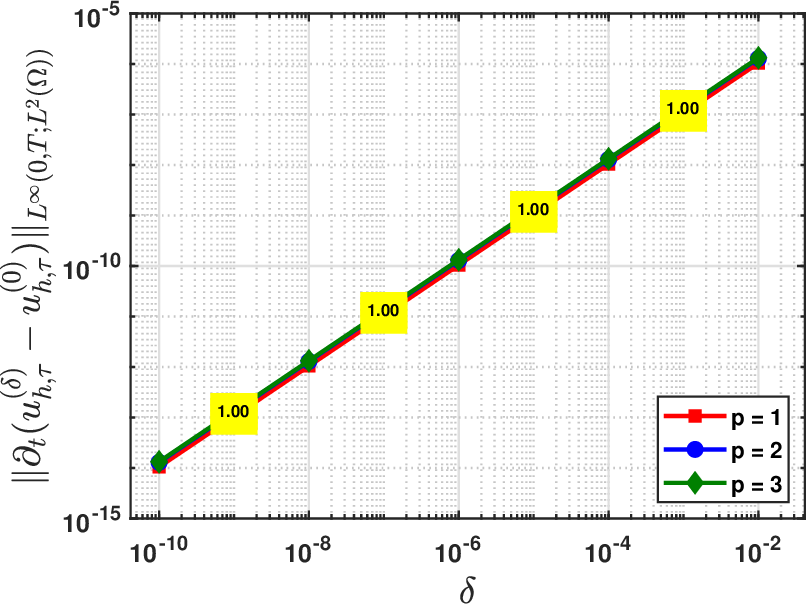}
    \hspace{0.3in}
    \includegraphics[width=2.8in]{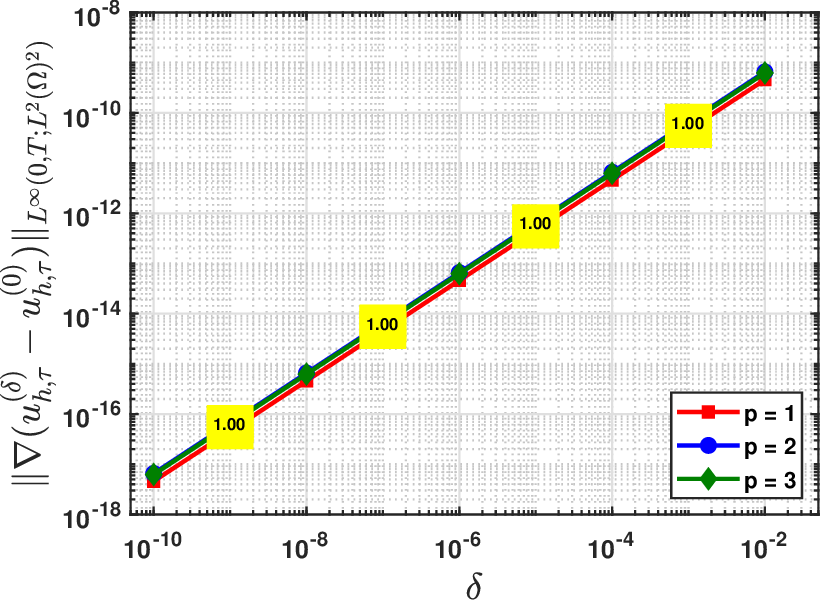}
    \caption{$\delta$-convergence (in~\emph{log-log} scale) for the nonlinear problem in Section~\ref{sect:non-zero-f} with zero initial data, and source term~$f$ in~\eqref{eq:source-term-delta}.}
    \label{fig:delta-convergence-f-not-0}
\end{figure}
\section{Conclusions\label{sec:conclusions}}
In this work, we have extended the theoretical framework of the DG--CG finite element methodology {introduced in~\cite{Walkington:2014}} to the classical model of nonlinear acoustics -- the quasilinear Westervelt equation. 
The principal challenges in the analysis stemmed from the fact that standard Galerkin testing techniques for wave problems fail to yield a bound on the discrete energy at all times, paired with the need to handle the particular type of nonlinearity present in the model that involves the second-order time derivative. 
To overcome this, we employed a testing strategy for a linearized problem based on using an auxiliary weight function, which was then combined with a fixed-point argument. An added facet of the work is that the \emph{a priori} error analysis guarantees that the estimates are robust with respect to the vanishing dissipation parameter~{$\delta$}. Furthermore, the asymptotic behavior of the exact problem is preserved in the singular limit. 

This work can serve as a starting point for developing a space--time finite element methodology for different problems arising in the context of nonlinear acoustics. Extending the framework developed here to refined models of nonlinear acoustics, such as the Kuznetsov or the Blackstock equation, is of interest. Additionally, multiphysics interactions of nonlinear acoustic waves lie at the core of many developing applications of ultrasound, and their simulation could strongly benefit from the findings made here.

\section*{Acknowledgments}
The first author acknowledges support from the
Italian Ministry of University and Research through the project PRIN2020 ``Advanced polyhedral discretizations of heterogeneous PDEs for multiphysics problems", and from the INdAM-GNCS through the
project CUP E53C23001670001.


\appendix 

\section{{Tools used in the stability analysis}\label{appendix:Stab}}
\subsection{Orthogonal projection and trace inequalities}
\begin{lemma}[Stability of~$\Pi_r^t$, {see~\cite[Thm.~18.16]{Ern_Guermond-book-I}}] 
\label{lemma:stability-L2-proj-Lp}
Let~$r \in \IN$ and~$s \in [1, \infty]$. There exists a positive constant~$\CS$ independent of~$\tau$ such that, for any~$\In \in \Tt$, it holds
\begin{equation*}
\Norm{\Pi_r^t \psi}{L^s(\In)} \le \CS \Norm{\psi}{L^s(\In)} \qquad \forall \psi \in L^s(\In).
\end{equation*}
\end{lemma}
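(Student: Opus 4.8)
The statement to prove is Lemma~\ref{lemma:stability-L2-proj-Lp}, the $L^s$-stability of the one-dimensional $L^2$-orthogonal projection $\Pi_r^t$ onto polynomials of degree at most $r$ on a single time interval $\In$.

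\textbf{Plan of the proof.} The natural approach is a standard scaling-plus-equivalence-of-norms argument, reducing everything to the reference interval. First I would map $\In = (\tnmo,\tn)$ affinely to the reference interval $\widehat I = (0,1)$ (or $(-1,1)$), noting that the $L^2(\In)$-orthogonal projection onto $\Pp{r}{\In}$ commutes with this affine change of variables: if $\widehat\Pi_r$ denotes the projection on $\widehat I$ and $F:\widehat I\to\In$ is the affine map, then $(\Pi_r^t\psi)\circ F = \widehat\Pi_r(\psi\circ F)$, because the pullback of the $L^2(\In)$ inner product is a constant multiple of the $L^2(\widehat I)$ inner product and polynomial spaces are preserved. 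Hence it suffices to establish the estimate $\Norm{\widehat\Pi_r\widehat\psi}{L^s(\widehat I)}\le \CS\Norm{\widehat\psi}{L^s(\widehat I)}$ on the fixed reference interval, with $\CS$ depending only on $r$ and $s$; the change of variables then transfers it to $\In$ with the same constant, since the Jacobian factors $|\tau_n|^{1/s}$ cancel between the two sides. (This is precisely why the constant is $\tau$-independent.)

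\textbf{The reference estimate.} On $\widehat I$, the operator $\widehat\Pi_r$ is a fixed linear map into the finite-dimensional space $\Pp{r}{\widehat I}$. Write $\widehat\Pi_r\widehat\psi = \sum_{j=0}^r \big(\int_{\widehat I}\widehat\psi\,\ell_j\big)\,L_j$, where $\{L_j\}$ are the $L^2(\widehat I)$-orthonormal Legendre polynomials and $\ell_j = L_j$ as well (self-dual basis). For $s\in[1,\infty)$, bound $\Norm{\widehat\Pi_r\widehat\psi}{L^s(\widehat I)}\le \sum_{j=0}^r |\int_{\widehat I}\widehat\psi\,L_j|\,\Norm{L_j}{L^s(\widehat I)}$, then apply H\"older's inequality with exponents $s'$ and $s$ to get $|\int_{\widehat I}\widehat\psi\,L_j|\le \Norm{\widehat\psi}{L^s(\widehat I)}\Norm{L_j}{L^{s'}(\widehat I)}$; collecting the finitely many Legendre norms gives $\CS := \sum_{j=0}^r \Norm{L_j}{L^s(\widehat I)}\Norm{L_j}{L^{s'}(\widehat I)}$, a finite constant depending only on $r$ and $s$. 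The case $s=\infty$ is handled the same way with the pairing $|\int_{\widehat I}\widehat\psi\,L_j|\le \Norm{\widehat\psi}{L^\infty(\widehat I)}\Norm{L_j}{L^1(\widehat I)}$. Alternatively, one can invoke the equivalence of all norms on the fixed finite-dimensional space $\Pp{r}{\widehat I}$ together with the $L^2$-stability of $\widehat\Pi_r$ (which is obvious, with constant $1$) — this is the route taken in the cited reference \cite[Thm.~18.16]{Ern_Guermond-book-I}, and I would simply cite it rather than reproduce the computation.

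\textbf{Main obstacle.} There is essentially no serious obstacle here: the only point requiring a moment of care is verifying that the projection genuinely commutes with the affine scaling (so that the reference constant transfers verbatim and is $\tau$-independent), and tracking that the $|\tau_n|^{1/s}$ Jacobian factors cancel. Since the result is explicitly quoted from \cite[Thm.~18.16]{Ern_Guermond-book-I}, the cleanest presentation is to state the reduction to the reference interval in one line and defer to that reference for the reference-interval bound; no new argument is needed.
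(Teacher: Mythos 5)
Your argument is correct: the reduction to the reference interval is legitimate (the $L^2$-projection commutes with the affine map, and the $\tau_n^{1/s}$ Jacobian factors cancel on both sides), and the Legendre-expansion plus H\"older estimate on the reference interval yields a constant depending only on $r$ and $s$. The paper itself gives no proof of this lemma beyond the citation to \cite[Thm.~18.16]{Ern_Guermond-book-I}, and your self-contained derivation is consistent with the standard argument in that reference.
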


\begin{lemma}[Estimates for~$\Pi_r^t$, {see~\cite[Thm.~18.16]{Ern_Guermond-book-I}}]
\label{lemma:polynomial-approx-time}
Let~$r \in \IN$. There exists a positive constant~$\Cp$ independent of~$\tau$ such that, for any~$\In \in \Tt$ and any sufficiently smooth function~$\psi$ defined on~$\In$, it holds
\begin{equation*}
\Norm{(\Id - \Pi_r^t) \psi}{W^m_p(\In)} \le \Cp \tau_n^{s - m} \SemiNorm{\psi}{W^s_p(\In)} \qquad s, m \in \IN, \ m \le s \le r + 1, \ p \in [1, \infty].
\end{equation*}
\end{lemma}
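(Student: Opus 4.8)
The statement is a classical polynomial-approximation estimate for the $L^2$-orthogonal projection, so the plan is the standard two-step argument: affine reduction to a fixed reference interval, followed by the Bramble--Hilbert lemma there. First I would fix the reference interval $\hat I := (0,1)$ and, for each $\In = (\tnmo,\tn) \in \Tt$, introduce the affine bijection $\Phi_n : \hat I \to \In$, $\Phi_n(\hat t) := \tnmo + \tau_n \hat t$, with pullback $\hat\psi := \psi \circ \Phi_n$. Since $\Phi_n$ maps $\Pp{r}{\hat I}$ bijectively onto $\Pp{r}{\In}$ and the change of variables rescales $L^2$ inner products by the constant factor $\tau_n$, the projection commutes with the pullback, $(\Pi_r^t\psi)\circ\Phi_n = \hat\Pi_r\hat\psi$, where $\hat\Pi_r$ is the $L^2(\hat I)$-orthogonal projection onto $\Pp{r}{\hat I}$; hence $\big((\Id - \Pi_r^t)\psi\big)\circ\Phi_n = (\Id - \hat\Pi_r)\hat\psi$, and it suffices to prove the estimate on $\hat I$ and then scale back.

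On the fixed interval $\hat I$ the operator $\Id - \hat\Pi_r$ is bounded from $W^s_p(\hat I)$ into $W^m_p(\hat I)$ whenever $m \le s$ (using $m\le s$ together with the $L^p$-stability of $\hat\Pi_r$ on $\hat I$, which is Lemma~\ref{lemma:stability-L2-proj-Lp} on the reference interval), and it annihilates $\Pp{s-1}{\hat I}\subset\Pp{r}{\hat I}$ because $s-1\le r$; the Bramble--Hilbert lemma therefore yields a constant $C = C(r,m,s,p)$ with $\Norm{(\Id - \hat\Pi_r)\hat\psi}{W^m_p(\hat I)} \le C\,\SemiNorm{\hat\psi}{W^s_p(\hat I)}$. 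Undoing the scaling, the $j$-th seminorm transforms as $\SemiNorm{w}{W^j_p(\In)} \simeq \tau_n^{1/p - j}\SemiNorm{w\circ\Phi_n}{W^j_p(\hat I)}$ for every $j$ (with the usual reading when $p=\infty$). Applying this for $j = 0,\dots,m$ on the left and $j = s$ on the right of the reference estimate gives $\SemiNorm{(\Id - \Pi_r^t)\psi}{W^j_p(\In)} \le C\,\tau_n^{s-j}\,\SemiNorm{\psi}{W^s_p(\In)}$; summing these $m+1$ seminorms, and noting that for $\tau$ bounded the power $\tau_n^{s-m}$ dominates since $s\ge m\ge j$, one obtains $\Norm{(\Id - \Pi_r^t)\psi}{W^m_p(\In)} \le \Cp\,\tau_n^{s-m}\,\SemiNorm{\psi}{W^s_p(\In)}$ with $\Cp$ independent of $\tau$ (depending only on $r$, $m$, $s$, $p$).

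I do not expect any serious obstacle: the only point that needs a line of care is verifying that $\hat\Pi_r$ genuinely commutes with the affine pullback and that its $L^p$-stability constant on $\hat I$ is $\tau$-independent, both of which are immediate once the change of variables is written out explicitly. Since the result is a special case of the general polynomial-approximation estimates in~\cite[Thm.~18.16]{Ern_Guermond-book-I}, one may alternatively cite that reference directly, as is done in the excerpt.
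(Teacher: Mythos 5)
Your argument is correct: the affine pullback to $\hat I=(0,1)$ does commute with the $L^2$-orthogonal projection (constant Jacobian), the Bramble--Hilbert lemma applies since $\Id-\hat\Pi_r$ is bounded from $W^s_p(\hat I)$ to $W^m_p(\hat I)$ and annihilates $\Pp{s-1}{\hat I}\subset\Pp{r}{\hat I}$, and the seminorm scaling $\SemiNorm{w}{W^j_p(\In)}=\tau_n^{1/p-j}\SemiNorm{w\circ\Phi_n}{W^j_p(\hat I)}$ delivers the stated rate, with the lower-order seminorms absorbed into the constant because $\tau_n$ is bounded above. The paper itself offers no proof of this lemma --- it simply cites \cite[Thm.~18.16]{Ern_Guermond-book-I} --- and your scaling-plus-Bramble--Hilbert argument is precisely the standard route by which that cited result is established, so there is nothing to compare beyond noting that you have supplied the proof the paper delegates to the reference.
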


\begin{lemma}[Trace inequalities, {see~\cite[Lemma 1.46]{DiPietro_Ern-book}}]
\label{lemma:trace-inequality}
Let~$\In \in \Tt$. There exists a positive constant~$\Ctr$ independent of~$\tau$ such that
\begin{equation*}
|\psi(\tnmo)|^2 + |\psi(\tn)|^2 \le \Ctr \big(\tau_n^{-1}\Norm{\psi}{L^2(\In)}^2 + \tau_n \Norm{\psi'}{L^2(\In)}^2 \big) \qquad \forall \psi \in H^1(\In).
\end{equation*}

Moreover, there exists a positive constant~$\Ctr^{\star}$ independent of~$\tau$ such that
\begin{equation}
\label{eq:polynomial-trace-inequality}
|p_q(\tnmo)|^2 + |p_q(\tn)|^2 \le \Ctr^{\star} \tau_n^{-1}\Norm{p_q}{L^2(\In)}^2 \qquad \forall p_q \in \Pp{q}{\In}.
\end{equation}
\end{lemma}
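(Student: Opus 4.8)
The plan is to reduce both estimates to the reference interval $\hat I = (0,1)$ by an affine change of variables and then scale back, carefully tracking the powers of $\tau_n$. Given $\psi \in H^1(\In)$, I set $\hat\psi(\hat t) := \psi(\tnmo + \tau_n \hat t)$ for $\hat t \in \hat I$. The substitution $t = \tnmo + \tau_n \hat t$ yields the scaling relations $\Norm{\psi}{L^2(\In)}^2 = \tau_n \Norm{\hat\psi}{L^2(\hat I)}^2$ and $\Norm{\psi'}{L^2(\In)}^2 = \tau_n^{-1}\Norm{\hat\psi'}{L^2(\hat I)}^2$, together with the pointwise identities $\psi(\tnmo) = \hat\psi(0)$ and $\psi(\tn) = \hat\psi(1)$. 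The whole argument then amounts to establishing the corresponding trace bounds on $\hat I$ with $\tau$-independent constants and substituting these relations.

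For the first inequality, the one-dimensional Sobolev embedding $H^1(\hat I) \hookrightarrow C^0([0,1])$ furnishes a universal constant $\hat C > 0$ with $|\hat\psi(0)|^2 + |\hat\psi(1)|^2 \le \hat C \bigl(\Norm{\hat\psi}{L^2(\hat I)}^2 + \Norm{\hat\psi'}{L^2(\hat I)}^2\bigr)$. Inserting the scaling relations turns $\Norm{\hat\psi}{L^2(\hat I)}^2$ into $\tau_n^{-1}\Norm{\psi}{L^2(\In)}^2$ and $\Norm{\hat\psi'}{L^2(\hat I)}^2$ into $\tau_n\Norm{\psi'}{L^2(\In)}^2$, delivering precisely the claimed estimate with $\Ctr = \hat C$, independent of $\tau$. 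For the second inequality, given $p_q \in \Pp{q}{\In}$, the scaled function $\hat p_q$ lies in the finite-dimensional space $\Pp{q}{\hat I}$, on which all norms are equivalent; in particular the endpoint-evaluation functionals are dominated by the $L^2(\hat I)$ norm, so there is $\hat C^{\star} = \hat C^{\star}(q)$ with $|\hat p_q(0)|^2 + |\hat p_q(1)|^2 \le \hat C^{\star}\Norm{\hat p_q}{L^2(\hat I)}^2$. Scaling back via $\Norm{\hat p_q}{L^2(\hat I)}^2 = \tau_n^{-1}\Norm{p_q}{L^2(\In)}^2$ yields the stated bound with $\Ctr^{\star} = \hat C^{\star}$, again independent of $\tau$ (though dependent on $q$).

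The only conceptual point — and there is no genuine obstacle here beyond the $\tau_n$-bookkeeping — is why the polynomial estimate dispenses with the $\tau_n\Norm{\psi'}{L^2(\In)}^2$ term of the general case: on $\Pp{q}{\hat I}$ an inverse inequality controls $\Norm{\hat p_q'}{L^2(\hat I)}$ by a $q$-dependent multiple of $\Norm{\hat p_q}{L^2(\hat I)}$, so the derivative contribution of the Sobolev bound is absorbed into the $L^2$ term. As an alternative that avoids the reference interval altogether, the general estimate also follows directly from the fundamental theorem of calculus: averaging $\psi(\tnmo) = \psi(s) - \int_{\tnmo}^{s}\psi'(r)\,\mathrm{d}r$ over $s \in \In$ and applying the Cauchy--Schwarz inequality gives $|\psi(\tnmo)| \le \tau_n^{-1/2}\Norm{\psi}{L^2(\In)} + \tau_n^{1/2}\Norm{\psi'}{L^2(\In)}$, with the analogous bound at $\tn$; squaring and adding then produces the explicit value $\Ctr = 4$.
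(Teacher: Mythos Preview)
Your proof is correct. The paper does not actually prove this lemma; it merely states the result and cites \cite[Lemma~1.46]{DiPietro_Ern-book}, so there is no ``paper's own proof'' to compare against. Your scaling-to-the-reference-interval argument (together with the direct fundamental-theorem-of-calculus alternative) is the standard way to establish such trace/endpoint inequalities and is exactly what one finds in the cited reference.
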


\subsection{Polynomial inverse estimates}
\begin{lemma}[Inverse estimates, {see~\cite[Lemma 4.5.3]{Brenner-Scott:book}}]
\label{lemma:inverse-estimate}
\begin{subequations}
There exists a positive constant~$\Cinv$ independent of~$h$ and~$\tau$ such that 
\begin{alignat}{3}
\label{eq:inverse-estimate-space}
\Norm{\nabla \wh}{L^2(\QT)^d} & \le \Cinv \hmin^{-1} \Norm{\wh}{L^2(\QT)} & & \qquad \forall \wh \in L^2(0, T; \Vhp), \\
\label{eq:inverse-estimate-space-L-infty}
\Norm{\wh}{L^{\infty}(\QT)^d} & \le \Cinv \hmin^{-\frac{d}{2}} \Norm{\wh}{L^2(\QT)} & & { \qquad \forall \wh \in L^2(0, T; \Vhp), }
\end{alignat}
and, for~$n = 1, \ldots, N$, 
\begin{alignat}{3}
\label{eq:inverse-estimate-time}
\Norm{\dpt \wt}{L^r(\Qn)} & \le \Cinv \tau_n^{-1} \Norm{\wt}{L^r(\Qn)} & & \qquad \forall \wt \in \Pp{q}{\In; L^r(\Omega)}, \, r \in [1, \infty],\\
{\Norm{\dpt \wt}{L^2(\Qn)}} & \le \Cinv \tau_n^{-\frac12} \Norm{\wt}{L^{\infty}(\In; L^2(\Omega))} & & \qquad \forall \wt \in \Pp{q}{\In; L^{\infty}(\Omega)}. \label{eq:inverse-estimate-time second}
\end{alignat}
\end{subequations}
\end{lemma}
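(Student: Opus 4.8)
The plan is to reduce each of the four inequalities to a norm equivalence on a fixed finite-dimensional polynomial space on a reference cell, and then transport that reference estimate to the physical cell by an affine change of variables, tracking the Jacobian and measure factors to produce the claimed powers of $\hmin$ and $\tau_n$. The only structural input is that on a finite-dimensional vector space all norms are equivalent, so that a seminorm (such as $\hat v \mapsto \SemiNorm{\hat v}{H^1(\widehat K)}$ or $\hat p \mapsto \Norm{\hat p'}{L^r(\widehat I)}$) is bounded by a norm ($\hat v \mapsto \Norm{\hat v}{L^2(\widehat K)}$, resp.\ $\hat p \mapsto \Norm{\hat p}{L^r(\widehat I)}$) with a constant depending only on $p$, $q$, $d$ and the reference cell. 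This is exactly the localization of the classical argument of \cite[Lemma 4.5.3]{Brenner-Scott:book}, extended to the Bochner-valued and mixed-norm settings.

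For the spatial bounds \eqref{eq:inverse-estimate-space} and \eqref{eq:inverse-estimate-space-L-infty}, I would fix $t$ and work elementwise, writing $v = \wh(\cdot,t)_{|_K} \in \Pp{p}{K}$. Let $\widehat K$ be the reference simplex and $F_K(\hat x) = B_K \hat x + b_K$ the affine map onto $K \in \Th$, so that by shape-regularity $\|B_K\| \lesssim \hK$, $\|B_K^{-1}\| \lesssim \hK^{-1}$, and $|\det B_K| \simeq \hK^{d}$; set $\hat v = v \circ F_K \in \Pp{p}{\widehat K}$. Using $\nabla v = B_K^{-\mathsf T}(\widehat\nabla \hat v)\circ F_K^{-1}$ and the change of variables, one gets $\Norm{\nabla v}{L^2(K)} \lesssim \|B_K^{-1}\|\,|\det B_K|^{1/2}\SemiNorm{\hat v}{H^1(\widehat K)}$ and $\Norm{v}{L^2(K)} \simeq |\det B_K|^{1/2}\Norm{\hat v}{L^2(\widehat K)}$; combining with $\SemiNorm{\hat v}{H^1(\widehat K)} \lesssim \Norm{\hat v}{L^2(\widehat K)}$ yields $\Norm{\nabla v}{L^2(K)} \lesssim \hK^{-1}\Norm{v}{L^2(K)} \le \hmin^{-1}\Norm{v}{L^2(K)}$, after which squaring, summing over $K \in \Th$, and integrating in $t$ gives \eqref{eq:inverse-estimate-space}. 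Likewise $\Norm{v}{L^\infty(K)} = \Norm{\hat v}{L^\infty(\widehat K)} \lesssim \Norm{\hat v}{L^2(\widehat K)} \simeq |\det B_K|^{-1/2}\Norm{v}{L^2(K)} \lesssim \hK^{-d/2}\Norm{v}{L^2(K)}$, and taking the maximum over $K$ with $\max_K \Norm{v}{L^2(K)} \le \Norm{v}{\Ltwo}$ gives the pointwise-in-time spatial estimate $\Norm{v}{\Linf} \lesssim \hmin^{-d/2}\Norm{v}{\Ltwo}$ for $v \in \Vhp$, which is how \eqref{eq:inverse-estimate-space-L-infty} is invoked in the analysis (e.g.\ in \eqref{est alpht}).

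For the temporal bounds \eqref{eq:inverse-estimate-time} and \eqref{eq:inverse-estimate-time second}, I would map $\In = (\tnmo,\tn)$ to $\widehat I = (0,1)$ via $t = \tnmo + \tau_n \hat t$, so that $\dpt = \tau_n^{-1}\partial_{\hat t}$ and the temporal measure contributes a factor $\tau_n$. The scalar reference facts $\Norm{p'}{L^r(\widehat I)} \lesssim \Norm{p}{L^r(\widehat I)}$ and $\Norm{p'}{L^\infty(\widehat I)} \lesssim \Norm{p}{L^\infty(\widehat I)}$ for $p \in \Pp{q}{\widehat I}$ are again instances of norm equivalence. For \eqref{eq:inverse-estimate-time} I would apply the first fact pointwise in $x$ to $\wt(x,\cdot) \in \Pp{q}{\widehat I}$, raise to the $r$-th power and integrate over $\Omega$ by Fubini (taking the essential supremum in $x$ when $r = \infty$); the factor $\tau_n^{1/r}$ from $\dt$ cancels between the two sides, leaving exactly $\tau_n^{-1}$. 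For \eqref{eq:inverse-estimate-time second} I would chain $\Norm{\partial_{\hat t}\wt(x,\cdot)}{L^2(\widehat I)} \le \Norm{\partial_{\hat t}\wt(x,\cdot)}{L^\infty(\widehat I)} \lesssim \Norm{\wt(x,\cdot)}{L^\infty(\widehat I)}$, square and integrate over $\Omega$, and then the measure rescaling converts $\tau_n^{-1}$ into the claimed $\tau_n^{-1/2}$.

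The one genuinely non-routine point, and the step I expect to be the main obstacle, is the final estimate in \eqref{eq:inverse-estimate-time second}: controlling $\int_\Omega \Norm{\wt(x,\cdot)}{L^\infty(\widehat I)}^2 \dx$ (an $L^2(\Omega)$-of-$L^\infty(\text{time})$ quantity) by $\sup_t \Norm{\wt(\cdot,t)}{\Ltwo}^2$ (an $L^\infty(\text{time})$-of-$L^2(\Omega)$ quantity), since $\sup_t$ and $\int_\Omega$ do not commute. I would resolve this with the discrete-in-time equivalence $\Norm{p}{L^\infty(\widehat I)} \lesssim \max_{0 \le j \le q}|p(s_j)|$ on $\Pp{q}{\widehat I}$ for a fixed set of $q+1$ nodes $\{s_j\} \subset \widehat I$ (once more, norm equivalence), which gives $\int_\Omega \Norm{\wt(x,\cdot)}{L^\infty(\widehat I)}^2 \dx \lesssim \sum_{j} \Norm{\wt(\cdot,s_j)}{\Ltwo}^2 \le (q+1)\Norm{\wt}{L^\infty(\widehat I;\Ltwo)}^2$. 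I would also flag one caveat: read literally with $L^\infty$ in both space and time on the left and $L^2$ in both on the right, \eqref{eq:inverse-estimate-space-L-infty} fails for arbitrary $\wh \in L^2(0,T;\Vhp)$ (a purely temporal factor in $L^2 \setminus L^\infty$ is a counterexample), so the estimate is to be understood, and is applied, slicewise in time, i.e.\ as the spatial inequality of the second paragraph with the same Bochner norm subsequently taken in $t$ on both sides.
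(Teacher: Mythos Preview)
The paper does not supply its own proof of this lemma: it is stated as a standard result with the citation to \cite[Lemma 4.5.3]{Brenner-Scott:book}, so there is nothing to compare against directly. Your argument is the expected one---reference-cell norm equivalence plus affine scaling---and is correct throughout. The nodal trick you use for \eqref{eq:inverse-estimate-time second} to swap $\sup_t$ and $\int_\Omega$ is exactly the right way to handle that mixed-norm step, and your caveat on the literal reading of \eqref{eq:inverse-estimate-space-L-infty} is well taken: the paper indeed only ever applies it as the slicewise spatial bound $\Norm{\vh}{L^\infty(\Omega)} \lesssim h_{\min}^{-d/2}\Norm{\vh}{L^2(\Omega)}$ for $\vh \in \Vhp$, then takes the same Bochner norm in $t$ on both sides.
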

\begin{lemma} 
\label{lemma:Linfty-L2}
Let~$\In \in \Tt$. The following bound holds: 
\begin{equation*}
\Norm{\wt}{L^{\infty}(\In; L^2(\Omega))} \le (1 + \Cinv) \tau_n^{-\frac12} \Norm{w_{\tau}}{L^2(\Qn)} \qquad \forall w_{\tau} \in \Pp{q}{\In; L^2(\Omega)},
\end{equation*}
where~$\Cinv$ is the constant in the polynomial inverse estimate~\eqref{eq:inverse-estimate-time}.
\end{lemma}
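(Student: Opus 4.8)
The plan is to exploit the smoothness in time of~$\wt$ (being polynomial in the time variable, it is~$C^\infty$ in~$t$ with values in~$L^2(\Omega)$) together with the fundamental theorem of calculus and the polynomial inverse estimate~\eqref{eq:inverse-estimate-time}. First I would fix an arbitrary~$t \in \In$ and, for every~$s \in \In$, write the identity
\[
\wt(\cdot, t) = \wt(\cdot, s) + \int_s^t \dpt \wt(\cdot, r)\, \mathrm{d}r,
\]
which holds in~$L^2(\Omega)$ since~$\wt(\cdot, \cdot)$ is absolutely continuous in time. Taking~$L^2(\Omega)$ norms and using the triangle inequality for the Bochner integral yields
\[
\Norm{\wt(\cdot, t)}{L^2(\Omega)} \le \Norm{\wt(\cdot, s)}{L^2(\Omega)} + \int_{\In} \Norm{\dpt \wt(\cdot, r)}{L^2(\Omega)}\, \mathrm{d}r.
\]

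Next I would average this inequality over~$s \in \In$. Integrating in~$s$ and dividing by~$\tau_n$ turns the first term into the mean over~$\In$ of~$\Norm{\wt(\cdot, \cdot)}{L^2(\Omega)}$, while the second term (independent of~$s$) is unchanged; an application of the Cauchy--Schwarz inequality in time to both resulting integrals then gives
\[
\Norm{\wt(\cdot, t)}{L^2(\Omega)} \le \tau_n^{-\frac12} \Norm{\wt}{L^2(\Qn)} + \tau_n^{\frac12} \Norm{\dpt \wt}{L^2(\Qn)}.
\]
It remains only to control the derivative term. Since~$\wt \in \Pp{q}{\In; L^2(\Omega)}$, the inverse estimate~\eqref{eq:inverse-estimate-time} with~$r = 2$ bounds~$\Norm{\dpt \wt}{L^2(\Qn)} \le \Cinv \tau_n^{-1} \Norm{\wt}{L^2(\Qn)}$, so that the second term above is at most~$\Cinv \tau_n^{-\frac12} \Norm{\wt}{L^2(\Qn)}$. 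Combining the two contributions and taking the supremum over~$t \in \In$ produces the claimed bound with constant~$1 + \Cinv$.

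Being a routine polynomial inverse estimate, this statement presents no genuine obstacle. The only point requiring care is the bookkeeping of the powers of~$\tau_n$: the averaging step is precisely what converts the pointwise quantity~$\Norm{\wt(\cdot, t)}{L^2(\Omega)}$ into the full~$L^2(\Qn)$ norm while generating the factor~$\tau_n^{-1/2}$, and one must ensure that the inverse estimate~\eqref{eq:inverse-estimate-time} is invoked on~$\wt$ itself (an admissible degree-$q$ polynomial) rather than on~$\dpt \wt$. The absolute continuity in time of~$\wt$ that legitimizes the use of the fundamental theorem of calculus is immediate, as~$\wt$ is polynomial in~$t$.
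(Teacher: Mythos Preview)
Your proof is correct and follows essentially the same approach as the paper: the paper invokes the one-dimensional inequality $\Norm{w}{L^{\infty}(a, b)} \le |b - a|^{-1/2} \Norm{w}{L^2(a, b)} + |b - a|^{1/2} \Norm{w'}{L^2(a, b)}$ from a reference and then applies the inverse estimate~\eqref{eq:inverse-estimate-time}, whereas you derive that same inequality from scratch via the fundamental theorem of calculus and averaging before applying the inverse estimate. The two arguments are identical in substance.
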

\begin{proof}
The result follows by combining the inequality~$\Norm{w}{L^{\infty}(a, b)} \le |b - a|^{-1/2} \Norm{w}{L^2(a, b)} + |b - a|^{1/2} \Norm{w'}{L^2(a, b)}$ (see~\cite[Eq.~(1.9)]{Ern_Guermond-book-I}) with the polynomial inverse estimate~\eqref{eq:inverse-estimate-time}.
\end{proof}

\section{{Tools used in the convergence analysis}}

\subsection{Properties of the temporal projection}

Next lemmas concern the stability and approximation properties of the projection~$\Pt$ in Definition~\ref{DEF:Pt}; see also~\cite[Lemma 2.4 and Prop. 2.5]{Dong_Mascotto_Wang:2024}, where~$q$-explicit stability and approximation properties of~$\Pt$ are derived.

\begin{lemma}[Stability of~$\Pt$, {see~\cite[Lemma 5.2]{Walkington:2014}}]
\label{lemma:stab-Pt}
Let~$q \in \IN$ with~$q \geq 2$. 
For any~$s \in [1, \infty)$, there exists a positive constant depending only on~$q$ such that, for all~$v \in C^1(0, T)$, the following bounds hold:
\begin{alignat*}{3}
\Norm{\dpt \Pt v}{L^s(0, T)} & \le \CstabPt T^{1/s}\Norm{\dpt v}{C^0([0, T])}, \\
\Norm{\Pt v}{L^s(0, T)} & \le \CstabPt T^{1/s} \big( \Norm{v}{C^0([0, T])} + \tau \Norm{\dpt u}{C^0([0, T])} \big).
\end{alignat*}
For~$s = \infty$, the above bounds hold with~$T^{1/s}$ replaced by~$1$.
\end{lemma}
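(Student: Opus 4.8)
The final statement to prove is Lemma~\ref{lemma:stab-Pt}, the stability of the temporal projection $\Pt$. The plan is to exploit the defining conditions~\eqref{eq:proj-Pt-def} of $\Pt$: the pointwise match at $t=0$, the matching of the derivative at each right endpoint $\tn^-$, and the orthogonality of the derivative against $\Pp{q-2}{\In}$. The key observation is that $\dpt \Pt v$ restricted to $\In$ is a polynomial of degree $q-1$, and the conditions force it to be a projection-type operator acting on $\dpt v$ on each interval: namely, $(\dpt \Pt v)_{|\In}$ is the unique element of $\Pp{q-1}{\In}$ whose $L^2(\In)$-projection onto $\Pp{q-2}{\In}$ agrees with that of $\dpt v$, and which matches $\dpt v$ at $\tn^-$. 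This is a bounded linear operator on $C^0(\overline{\In})$ with a constant depending only on $q$ (by equivalence of norms on the finite-dimensional space $\Pp{q-1}{\In}$ after rescaling $\In$ to a reference interval).

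First I would rescale to the reference interval $\widehat I = (0,1)$ and observe that the map $\widehat g \mapsto \widehat G$, where $\widehat G \in \Pp{q-1}{\widehat I}$ satisfies $(\widehat G - \widehat g, p_{q-2})_{\widehat I} = 0$ for all $p_{q-2} \in \Pp{q-2}{\widehat I}$ and $\widehat G(1) = \widehat g(1)$, is well-posed: the homogeneous problem ($\widehat g = 0$) forces $\widehat G \perp \Pp{q-2}{\widehat I}$, so $\widehat G = c L_{q-1}$ for the Legendre polynomial $L_{q-1}$, and then $\widehat G(1) = c L_{q-1}(1) = c = 0$. Hence the map is a well-defined linear operator on the finite-dimensional range, and since it factors through point evaluation and $L^2$-projection, $\|\widehat G\|_{L^\infty(\widehat I)} \le C_q \|\widehat g\|_{L^\infty(\widehat I)}$ with $C_q$ depending only on $q$. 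Scaling back to $\In$ gives $\|\dpt \Pt v\|_{L^\infty(\In)} \le C_q \|\dpt v\|_{L^\infty(\In)} \le C_q \|\dpt v\|_{C^0([0,T])}$, and then $\|\dpt \Pt v\|_{L^s(0,T)} \le T^{1/s} \|\dpt \Pt v\|_{L^\infty(0,T)} \le C_q T^{1/s} \|\dpt v\|_{C^0([0,T])}$, which is the first bound.

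For the second bound, I would integrate from $0$: using $\Pt v(0) = v(0)$, write $\Pt v(t) = v(0) + \int_0^t \dpt \Pt v(s)\,\mathrm{d}s$, so that $\|\Pt v\|_{L^\infty(0,T)} \le |v(0)| + T \|\dpt \Pt v\|_{L^\infty(0,T)} \le \|v\|_{C^0([0,T])} + C_q T \|\dpt v\|_{C^0([0,T])}$. This already matches the claimed form up to renaming constants (with $\tau \le T$ absorbed, or more sharply by noting that on the first interval one only picks up $\tau_1$; the stated bound with $\tau$ is the sharper local version, obtained by telescoping the increments interval by interval). Taking the $L^s$ norm gives the factor $T^{1/s}$; the $s = \infty$ case is immediate by dropping that factor.

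The main obstacle, such as it is, is purely the bookkeeping of constants: verifying that the reference-interval operator is genuinely bounded independently of everything but $q$ (which reduces to the nondegeneracy argument via $L_{q-1}(1) = 1 \ne 0$), and then tracking how the $\tau$ versus $T$ factors appear when passing from local interval estimates to the global bound. There is no deep difficulty here since this is essentially Lemma~5.2 of~\cite{Walkington:2014}; the only care needed is to keep the constant $q$-dependent but $\tau$-independent, which the rescaling argument handles cleanly. I would also remark in passing that a $q$-explicit version of these constants is available in~\cite[Lemma 2.4]{Dong_Mascotto_Wang:2024}, should sharper tracking be desired.
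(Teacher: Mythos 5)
The paper does not actually prove this lemma: it is recalled in Appendix~B with a pointer to \cite[Lemma~5.2]{Walkington:2014}, so there is no in-paper argument to compare against line by line. Your reconstruction of the standard proof is essentially correct for the first bound: on each $\In$ the derivative $(\Pt v)'$ is the image of $v'$ under the reference-interval operator you describe, its injectivity follows from $L_{q-1}(\tn)\neq 0$, and boundedness on the finite-dimensional range with a constant depending only on $q$ survives the rescaling. That gives $\Norm{\dpt \Pt v}{L^\infty(\In)}\le C_q\Norm{\dpt v}{L^\infty(\In)}$ and hence the first estimate.

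For the second bound, however, your argument as written does not quite produce the stated $\tau$ factor, and the two fallbacks you offer do not repair it: replacing $\tau$ by $T$ yields a strictly \emph{weaker} inequality than the one claimed (so ``$\tau\le T$ absorbed'' goes in the wrong direction), and naive telescoping of $\Pt v(\tn)=v(0)+\sum_{m\le n}\int_{I_m}(\Pt v)'$ again only gives a $T\Norm{\dpt v}{C^0([0,T])}$ term. The missing ingredient is that, since $q\ge 2$, the constant function $1$ belongs to $\Pp{q-2}{I_m}$, so the orthogonality condition gives $\int_{I_m}\big((\Pt v)'-v'\big)\,\mathrm{d}t=0$; combined with $\Pt v(0)=v(0)$ and continuity, this forces $\Pt v(\tm)=v(\tm)$ at \emph{every} node. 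Then on $\In$ one writes $\Pt v(t)=v(\tnmo)+\int_{\tnmo}^{t}(\Pt v)'\,\mathrm{d}s$, so $\Norm{\Pt v}{L^\infty(\In)}\le \Norm{v}{C^0([0,T])}+C_q\,\tau_n\Norm{\dpt v}{C^0([0,T])}$, which is exactly the local $\tau$-version. With this one correction your proof is complete; the rest (taking $L^s$ norms to pick up $T^{1/s}$, the $s=\infty$ case) is as you say.
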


\begin{lemma}[Estimates for~$\Pt$, {see~\cite[Lemma 5.2]{Walkington:2014}}]
\label{lemma:estimates-Pt}
Let~$q \in \IN$ with~$q \geq 2$.
For any~$s \in [1, \infty]$, the following estimates hold for all~$v \in W_s^{m + 1}(0, T)$ with~$1 \le m \le q$: {there exists a positive constant~$\CapproxPt$ independent of~$\tau$ such that}
\begin{alignat*}{3}
\Norm{(\Id - \Pt) v}{L^s(0, T)} & \le \CapproxPt \tau^{m + 1} \SemiNorm{v}{W_s^{m + 1}(0, T)}, \\
\Norm{\dpt (\Id - \Pt) v}{L^s(0, T)} & \le \CapproxPt \tau^m \SemiNorm{v}{W_s^{m + 1}(0, T)}.
\end{alignat*}
\end{lemma}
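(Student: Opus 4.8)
The plan is to reduce the analysis to a local, slab-by-slab study of the operator that $\Pt$ induces on the time derivative. Inspecting Definition~\ref{DEF:Pt}, the second and third defining conditions fix $(\Pt v)'|_{\In} \in \Pp{q-1}{\In}$ on each interval \emph{independently}: it is the unique $w \in \Pp{q-1}{\In}$ with $w(\tn^-) = v'(\tn^-)$ and $(w - v', p_{q-2})_{\In} = 0$ for all $p_{q-2} \in \Pp{q-2}{\In}$. First I would check that these $1 + (q-1) = \dim \Pp{q-1}{\In}$ conditions determine $w$ uniquely: a homogeneous solution $w \in \Pp{q-1}{\In}$ orthogonal to $\Pp{q-2}{\In}$ must be a scalar multiple of the degree-$(q-1)$ Legendre polynomial on $\In$, which does not vanish at $\tn$, so the endpoint condition forces $w = 0$. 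Writing $\pi_n$ for the resulting local map $v'|_{\In} \mapsto (\Pt v)'|_{\In}$, I record two facts: $\pi_n$ reproduces $\Pp{q-1}{\In}$ (taking $w = v'$ solves the local system whenever $v'|_{\In} \in \Pp{q-1}{\In}$), and $\dpt(\Id - \Pt)v|_{\In} = (\Id - \pi_n)(v'|_{\In})$.

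For the derivative estimate, I would establish that $\pi_n$ is $L^s(\In)$-stable uniformly in $\tau$ by scaling to a fixed reference interval, where $\pi_n$ becomes a $\tau$-independent bounded linear map on the finite-dimensional space $\Pp{q-1}$ (the Legendre characterization from the previous step guarantees this reference operator is well defined and bounded), and then transforming back. Since $\pi_n$ is $L^s$-stable and reproduces $\Pp{q-1}{\In}$, a Bramble--Hilbert (Deny--Lions) argument yields, for $v' \in W_s^m(\In)$ with $1 \le m \le q$,
\begin{equation*}
\Norm{\dpt(\Id - \Pt)v}{L^s(\In)} = \Norm{(\Id - \pi_n)v'}{L^s(\In)} \le C \tau_n^{m} \SemiNorm{v'}{W_s^m(\In)} = C \tau_n^m \SemiNorm{v}{W_s^{m+1}(\In)}.
\end{equation*}
Summing the $s$-th powers over $n$ (replacing the sum by a maximum when $s = \infty$) and using $\tau_n \le \tau$ gives the second estimate with constant $\CapproxPt$.

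The value estimate then follows by gaining one power of $\tau$ through a superconvergence observation: $(\Id - \Pt)v$ vanishes at every node. Indeed $\Pt v(0) = v(0)$, and taking $p_{q-2} = 1$ in the moment condition gives $\int_{\In}(\Pt v)' = \int_{\In} v'$, so the increment of $\Pt v$ across each $\In$ matches that of $v$; since both $\Pt v$ and $v$ are continuous on $[0,T]$, telescoping from $t = 0$ yields $\Pt v(\tn) = v(\tn)$ for $n = 0, \ldots, N$. Hence $e := (\Id - \Pt)v$ obeys $e(\tnmo) = e(\tn) = 0$ on each $\In$, and writing $e(t) = \int_{\tnmo}^t e'$ followed by H\"older gives the one-dimensional Poincar\'e bound $\Norm{e}{L^s(\In)} \le \tau_n \Norm{e'}{L^s(\In)}$. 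Combining this with the derivative estimate and summing over $n$ produces the first estimate with the improved power $\tau^{m+1}$.

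The routine ingredients (Bramble--Hilbert, Poincar\'e, summation of local norms, and the separate treatment of $s = \infty$) are standard; the two steps requiring genuine care are the uniform-in-$\tau$ well-posedness and stability of the local projector $\pi_n$, which I would settle via the Legendre characterization on the reference interval, and the nodal exactness $\Pt v(\tn) = v(\tn)$, which is the crux that upgrades the value estimate by one order of $\tau$. The main obstacle I anticipate is making the stability/scaling argument for $\pi_n$ fully rigorous while keeping the constant independent of $\tau$, since $\pi_n$ is a non-$L^2$-orthogonal projection (it mixes an endpoint-interpolation condition with moment conditions) and therefore does not inherit stability from an orthogonality argument directly.
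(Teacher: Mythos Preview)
The paper does not give its own proof of this lemma; it is stated in Appendix~B with a direct citation to \cite[Lemma~5.2]{Walkington:2014} and no argument is supplied. Your proposal is correct and is essentially the standard route one would take to reconstruct that result: the local characterization of $(\Pt v)'|_{\In}$ as a Gauss--Radau-type projector (moment conditions plus one endpoint condition), uniform $L^s$-stability by scaling to a reference interval, Bramble--Hilbert for the derivative estimate, and---the key observation---nodal exactness $\Pt v(\tn)=v(\tn)$ (obtained by testing the moment condition with $p_{q-2}=1$ and telescoping from $\Pt v(0)=v(0)$), which buys the extra power of $\tau$ in the value estimate via a local Poincar\'e bound. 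The ``main obstacle'' you flag is not a genuine obstruction: the reference-interval operator is a fixed linear map into a finite-dimensional space, so its $L^s$-boundedness is automatic, and the affine change of variables between $\In$ and the reference interval preserves $L^s$ norms up to the Jacobian factor $\tau_n^{1/s}$, which cancels on both sides.
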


For any Banach space~$(Z, \Norm{\cdot}{Z})$ with~$Z \subseteq L^1(\Omega)$, the definition of~$\Pt$ can be extended to functions in~$C^1(0, T; Z)$ by requiring that~\eqref{eq:proj-Pt-def} holds almost everywhere in~$\Omega$.

\subsection{Properties of the Ritz projection and the Lagrange interpolant}

We next recall certain approximation properties of the Ritz projection~$\Rh$.
\begin{lemma}[Estimates for~$\Rh$, {see~\cite[Thms 5.4.4 and 5.4.8]{Brenner-Scott:book}}]
\label{lemma:estimates-Rh}
Let~$p \in \IN$ with~$p \geq 1$, and let~$\Omega$ satisfy Assumption~\ref{asm:elliptic-regularity}. 
Then, there exists a positive constant~$\CapproxRh$ independent of~$h$ such that
\begin{subequations}
\begin{alignat}{3}
\Norm{(\Id - \Rh) z}{L^2(\Omega)} & \le \CapproxRh h^{\ell + 1} \SemiNorm{z}{H^{\ell + 1}(\Omega)} & & \qquad \forall z \in H^{\ell + 1}(\Omega) \cap H_0^1(\Omega), \ 0 \le \ell \le p,  \label{est: Rh L2} \\
\Norm{\nabla(\Id - \Rh)z}{L^2(\Omega)^d} & \le \CapproxRh h^{\ell} \SemiNorm{z}{H^{\ell + 1}(\Omega)} & & \qquad \forall z \in H^{\ell + 1}(\Omega) \cap H_0^1(\Omega), \ 0 \le \ell \le p.
\end{alignat}
\end{subequations}
\end{lemma}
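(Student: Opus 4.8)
The plan is to establish the two bounds separately: first the $H^1$-seminorm (energy) estimate via the best-approximation property of $\Rh$, and then the $L^2$ estimate by an Aubin--Nitsche duality argument, which is precisely the step where Assumption~\ref{asm:elliptic-regularity} becomes essential.

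For the energy estimate, I would exploit the fact that the defining orthogonality relation of $\Rh$ renders $\Rh z$ the best approximation of $z$ in the seminorm $\Norm{\nabla \cdot}{L^2(\Omega)^d}$. Indeed, for any $\vh \in \Vhp$, writing $z - \vh = (z - \Rh z) + (\Rh z - \vh)$ and using the orthogonality $(\nabla(z - \Rh z), \nabla(\Rh z - \vh))_{\Omega} = 0$ (valid since $\Rh z - \vh \in \Vhp$) together with the Cauchy--Schwarz inequality gives
\begin{equation*}
\Norm{\nabla(z - \Rh z)}{L^2(\Omega)^d}^2 = (\nabla(z - \Rh z), \nabla(z - \vh))_{\Omega} \le \Norm{\nabla(z - \Rh z)}{L^2(\Omega)^d} \Norm{\nabla(z - \vh)}{L^2(\Omega)^d}.
\end{equation*}
Taking $\vh = \Ih z$, the Lagrange (or Scott--Zhang quasi-)interpolant onto $\Vhp$, and invoking the standard local interpolation estimate $\Norm{\nabla(z - \Ih z)}{L^2(\Omega)^d} \lesssim h^{\ell} \SemiNorm{z}{H^{\ell + 1}(\Omega)}$, valid on shape-regular meshes for $0 \le \ell \le p$, yields the gradient bound with a constant $\CapproxRh$ independent of $h$.

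For the $L^2$ estimate, I would set $e := (\Id - \Rh) z$ and introduce the dual problem: find $w \in H_0^1(\Omega)$ with $-\Delta w = e$ in $\Omega$. By Assumption~\ref{asm:elliptic-regularity}, the solution satisfies $w \in H^2(\Omega)$ together with the a priori bound $\Norm{w}{H^2(\Omega)} \lesssim \Norm{e}{L^2(\Omega)}$. Testing the dual equation against $e$, integrating by parts, and using the Galerkin orthogonality of $\Rh$ against an arbitrary $\vh \in \Vhp$, I obtain
\begin{equation*}
\Norm{e}{L^2(\Omega)}^2 = (\nabla e, \nabla w)_{\Omega} = (\nabla e, \nabla(w - \vh))_{\Omega} \le \Norm{\nabla e}{L^2(\Omega)^d} \Norm{\nabla(w - \vh)}{L^2(\Omega)^d}.
\end{equation*}
Choosing $\vh = \Ih w$ and combining the first-order interpolation bound $\Norm{\nabla(w - \Ih w)}{L^2(\Omega)^d} \lesssim h \SemiNorm{w}{H^2(\Omega)} \lesssim h \Norm{e}{L^2(\Omega)}$ with the already-proven gradient estimate on $e$ leads to
\begin{equation*}
\Norm{e}{L^2(\Omega)}^2 \lesssim h \Norm{\nabla e}{L^2(\Omega)^d} \Norm{e}{L^2(\Omega)} \lesssim h^{\ell + 1} \SemiNorm{z}{H^{\ell + 1}(\Omega)} \Norm{e}{L^2(\Omega)},
\end{equation*}
and dividing through by $\Norm{e}{L^2(\Omega)}$ furnishes the $L^2$ bound.

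The main obstacle, and indeed the only nontrivial ingredient, is the duality step: its success hinges entirely on the elliptic regularity hypothesis guaranteeing $H^2$-regularity of the dual solution on the polytopic domain $\Omega$. Without this assumption one recovers only the suboptimal $L^2$ rate $h^{\ell}$ inherited directly from the energy bound, so that the extra power of $h$ in \eqref{est: Rh L2} is genuinely attributable to Assumption~\ref{asm:elliptic-regularity}. The remaining manipulations are routine applications of Galerkin orthogonality, the Cauchy--Schwarz inequality, and classical polynomial interpolation estimates.
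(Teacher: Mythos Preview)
The paper does not actually prove this lemma; it is stated as a known result with a citation to \cite[Thms~5.4.4 and~5.4.8]{Brenner-Scott:book}. Your proposal reproduces precisely the classical argument found in that reference---C\'ea's lemma (via Galerkin orthogonality and best approximation) for the energy estimate, followed by the Aubin--Nitsche duality trick for the $L^2$ bound---so it is correct and aligned with the cited source.
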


We denote the Lagrange interpolant operator
by~$\Ih{} : C^0(\overline{\Omega}) \rightarrow \Vhp$.
The following stability and approximation properties follow from~\cite[Thm. 4.4.20 in Ch.~4]{Brenner-Scott:book}.

\begin{lemma}[Stability of~$\Ih{}$]
\label{lemma:stab-Ih}
Let~$p \in \IN$ with~$p \geq 1$. Then, there exists a positive constant~$\CstabI$ independent of~$h$ such that
\begin{equation*}
\Norm{\Ih v}{\Linf} \le \CstabI \Norm{v}{\Linf}   \quad \forall v \in C^0(\overline{\Omega}).
\end{equation*}
\end{lemma}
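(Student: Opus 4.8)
The plan is to reduce the global $L^\infty$-bound to a local estimate on each simplex and then to a scale-invariant bound on the reference element, exploiting the fact that the supremum norm is invariant under affine changes of variables. First I would fix an element $K \in \Th$ and express the local restriction of the interpolant through the degree-$p$ Lagrange nodal basis: letting $\{\bx_i^K\}_i \subset \overline{K}$ denote the Lagrange nodes and $\{\phi_i^K\}_i \subset \Pp{p}{K}$ the associated nodal basis functions, characterized by $\phi_i^K(\bx_j^K) = \delta_{ij}$, the definition of the Lagrange interpolant gives $(\Ih v)|_K = \sum_i v(\bx_i^K) \phi_i^K$. The triangle inequality then yields
\[
\Norm{\Ih v}{L^\infty(K)} \le \Big( \max_i |v(\bx_i^K)| \Big) \sum_i \Norm{\phi_i^K}{L^\infty(K)} \le \Norm{v}{L^\infty(K)} \sum_i \Norm{\phi_i^K}{L^\infty(K)},
\]
where the last step uses that every node lies in $\overline{K}$, so that $|v(\bx_i^K)| \le \Norm{v}{L^\infty(K)}$.

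The key step is to bound $\sum_i \Norm{\phi_i^K}{L^\infty(K)}$ by a constant independent of $h$ and $K$. For this I would introduce the affine bijection $F_K : \hat K \to K$ from the reference simplex $\hat K$, under which each local basis function is the pullback $\phi_i^K = \hat\phi_i \circ F_K^{-1}$ of the corresponding fixed reference basis function $\hat\phi_i \in \Pp{p}{\hat K}$. Because $F_K$ is a bijection, composition with it leaves the supremum norm unchanged, so $\Norm{\phi_i^K}{L^\infty(K)} = \Norm{\hat\phi_i}{L^\infty(\hat K)}$; crucially, no Jacobian factor appears, in contrast to the $L^s$-stability for finite $s$. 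Summing over the nodes gives $\sum_i \Norm{\phi_i^K}{L^\infty(K)} = \sum_i \Norm{\hat\phi_i}{L^\infty(\hat K)} =: \CstabI$, a constant depending only on $p$ and the spatial dimension $d$.

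To conclude, I would take the supremum over all $K \in \Th$. Since $\Norm{\Ih v}{\Linf} = \max_{K \in \Th} \Norm{\Ih v}{L^\infty(K)}$ and $\Norm{v}{L^\infty(K)} \le \Norm{v}{\Linf}$ for every $K$, the two estimates above combine to give $\Norm{\Ih v}{\Linf} \le \CstabI \Norm{v}{\Linf}$ with $\CstabI$ independent of $h$. I would also remark that, since $\Vhp$ enforces the homogeneous Dirichlet condition, the boundary nodal values are set to zero; this is harmless for the estimate, as replacing any coefficient by zero only decreases the quantities involved, and the bound $|v(\bx_i^K)| \le \Norm{v}{\Linf}$ continues to control every coefficient actually used.

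I do not expect a substantial obstacle, since the result is classical (cf.~\cite[Thm. 4.4.20]{Brenner-Scott:book}); the only point requiring genuine care is the $h$-independence of $\CstabI$, which in the $L^\infty$ setting is immediate from the affine invariance of the supremum norm. Shape-regularity of $\{\Th\}_{h>0}$ enters solely to guarantee that the maps $F_K$ are nondegenerate bijections, and, unlike stability in $L^s$ with $s < \infty$, no mesh-dependent Jacobian factors must be absorbed, which is what keeps the argument essentially a one-line consequence of the boundedness of the reference nodal basis.
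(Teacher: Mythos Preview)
Your argument is correct and is the standard proof of this classical result. The paper does not give its own proof of this lemma; it simply states that the stability (and approximation) properties of~$\Ih$ follow from~\cite[Thm.~4.4.20]{Brenner-Scott:book}, the same reference you invoke, so there is nothing further to compare.
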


\begin{lemma}[Estimates for~$\Ih{}$]
\label{lemma:approx-Ih}
Let~$p \in \IN$ with~$p \geq 1$. Then, there exists a positive constant~$\CapproxI$ independent of~$h$ such that
\begin{alignat*}{3}
\Norm{\Ih v - v}{L^2(\Omega)} \le  \CapproxI h^{\ell + 1} \SemiNorm{v}{H^{\ell + 1}(\Omega)} \quad \forall v \in H^{\ell + 1}(\Omega), \ {(d-2)/2 < \ell} \le p.
\end{alignat*}
\end{lemma}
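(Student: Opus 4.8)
The plan is to prove the bound elementwise by the classical reference-element technique --- Bramble--Hilbert on the reference simplex followed by affine scaling --- and then assemble the global estimate by summing over $\Th$. The only point requiring care beyond a textbook argument is that the nodal interpolant must be well defined on $H^{\ell+1}$, which is precisely where the hypothesis $(d-2)/2 < \ell$ enters.

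First I would record the Sobolev embedding underpinning the whole argument. The condition $(d-2)/2 < \ell$ is equivalent to $\ell + 1 > d/2$, which by the Sobolev embedding theorem gives $H^{\ell+1}(\widehat K) \hookrightarrow C^0(\overline{\widehat K})$ on the reference simplex $\widehat K$. Thus the nodal values are continuous functionals on $H^{\ell+1}(\widehat K)$, and the reference Lagrange interpolation operator $\widehat{\mathcal I} : H^{\ell+1}(\widehat K) \to L^2(\widehat K)$ is a bounded linear map that reproduces $\Pp{p}{\widehat K}$. Since $\ell \le p$, the operator $\Id - \widehat{\mathcal I}$ annihilates $\Pp{\ell}{\widehat K}$, so the Bramble--Hilbert (Deny--Lions) lemma yields a constant $\widehat C = \widehat C(\widehat K, \ell, p)$ with
\[
\Norm{(\Id - \widehat{\mathcal I}) \widehat v}{L^2(\widehat K)} \le \widehat C \, \SemiNorm{\widehat v}{H^{\ell+1}(\widehat K)} \qquad \forall \widehat v \in H^{\ell+1}(\widehat K).
\]

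Next I would transfer this to an arbitrary element $K \in \Th$ via the affine map $F_K : \widehat K \to K$, $F_K(\widehat{\bx}) = B_K \widehat{\bx} + b_K$, using the commutation $(\Ih v)\circ F_K = \widehat{\mathcal I}(v \circ F_K)$ (valid because affine maps send nodes to nodes and $\Pp{p}{\cdot}$ is affine invariant). The standard affine scaling bounds $\|B_K\| \lesssim \hK$, $\|B_K^{-1}\| \lesssim \rho_K^{-1}$, $|\det B_K| \simeq \hK^{d}$, combined with shape-regularity $\hK/\rho_K \le C$, give $\SemiNorm{v \circ F_K}{H^{\ell+1}(\widehat K)} \lesssim \|B_K\|^{\ell+1} |\det B_K|^{-1/2} \SemiNorm{v}{H^{\ell+1}(K)}$ and $\Norm{w}{L^2(K)} = |\det B_K|^{1/2}\Norm{w \circ F_K}{L^2(\widehat K)}$. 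Chaining these through the reference estimate produces the local bound $\Norm{(\Id - \Ih) v}{L^2(K)} \le C \hK^{\ell+1}\SemiNorm{v}{H^{\ell+1}(K)}$ with $C$ independent of $K$ and $h$. Summing squares over all $K \in \Th$ and bounding $\hK \le h$ then gives
\[
\Norm{(\Id - \Ih) v}{L^2(\Omega)}^2 = \sum_{K \in \Th}\Norm{(\Id - \Ih) v}{L^2(K)}^2 \le C^2 h^{2(\ell+1)}\sum_{K \in \Th}\SemiNorm{v}{H^{\ell+1}(K)}^2 = C^2 h^{2(\ell+1)}\SemiNorm{v}{H^{\ell+1}(\Omega)}^2,
\]
which is the asserted estimate with $\CapproxI = C$.

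The main --- and essentially only --- obstacle is the well-definedness step: in contrast to interior $W^m_p$ estimates, one cannot bound $\Ih v$ in $L^2$ without first guaranteeing that point evaluations are bounded on $H^{\ell+1}$, and $\ell+1 > d/2$ is exactly the borderline Sobolev continuity exponent (for $\ell + 1 \le d/2$ the embedding into $C^0$ fails and the interpolant need not make sense). In the present setting $d \in \{1,2,3\}$ this condition is satisfied for every $\ell \ge 1$, so the restriction $(d-2)/2 < \ell$ is compatible with the range $1 \le \ell \le p$ used elsewhere; all remaining steps are routine affine scaling and assembly.
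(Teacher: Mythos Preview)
Your argument is correct and is precisely the classical Bramble--Hilbert/affine-scaling proof; the paper does not supply its own proof but simply cites \cite[Thm.~4.4.20]{Brenner-Scott:book}, which is exactly the argument you have reproduced.
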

We finish this section with some useful stability bounds for the Ritz projection.
\begin{lemma}[Stability of~$\Rh$]
\label{lemma:stab-Rh}
Let~$p \in \IN$ 
with~$p \geq 1$. There exists a positive constant~$\CstabRh$ independent of~$h$ such that
\begin{alignat}{3}
\label{eq:Poincare}
\Norm{\Rh z}{L^2(\Omega)} & \le \CstabRh\Norm{\nabla z}{L^2(\Omega)^d} & & \qquad \forall z \in H_0^1(\Omega), \\
\label{eq:bound-Rh-Linf}
\Norm{\Rh z}{\Linf} & \le \CstabRh \Norm{z}{H^2(\Omega)} & & \qquad \forall z \in H^2(\Omega) \cap H_0^1(\Omega).
\end{alignat}
\end{lemma}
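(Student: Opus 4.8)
The plan is to establish the two bounds separately, using the defining identity of the Ritz projection together with Poincar\'e's inequality for \eqref{eq:Poincare}, and an inverse-estimate argument for \eqref{eq:bound-Rh-Linf}. For the $L^2$ bound \eqref{eq:Poincare}, I would first record the $H^1$-seminorm stability of $\Rh$: taking $\vh = \Rh z \in \Vhp$ in the defining relation $(\nabla(\Rh z - z), \nabla \vh)_{\Omega} = 0$ gives
\begin{equation*}
\Norm{\nabla \Rh z}{\Ltwod}^2 = (\nabla z, \nabla \Rh z)_{\Omega} \le \Norm{\nabla z}{\Ltwod} \Norm{\nabla \Rh z}{\Ltwod},
\end{equation*}
so that $\Norm{\nabla \Rh z}{\Ltwod} \le \Norm{\nabla z}{\Ltwod}$. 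Since $\Rh z \in \Vhp \subset H_0^1(\Omega)$, Poincar\'e's inequality yields $\Norm{\Rh z}{\Ltwo} \le C_P \Norm{\nabla \Rh z}{\Ltwod}$, and combining the two estimates gives \eqref{eq:Poincare} with $\CstabRh$ equal to the Poincar\'e constant $C_P$, which is independent of $h$.

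For the $L^\infty$ bound \eqref{eq:bound-Rh-Linf}, I would insert the Lagrange interpolant $\Ih{} z$ and split
\begin{equation*}
\Norm{\Rh z}{\Linf} \le \Norm{\Rh z - \Ih{} z}{\Linf} + \Norm{\Ih{} z}{\Linf}.
\end{equation*}
The second term is controlled by the stability of $\Ih{}$ (Lemma~\ref{lemma:stab-Ih}) followed by the Sobolev embedding $H^2(\Omega) \hookrightarrow \Linf$, valid for $d \le 3$, giving $\Norm{\Ih{} z}{\Linf} \le \CstabI \Norm{z}{\Linf} \lesssim \Norm{z}{H^2(\Omega)}$. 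For the first term, $\Rh z - \Ih{} z \in \Vhp$ is discrete, so the inverse estimate \eqref{eq:inverse-estimate-space-L-infty} (applied pointwise in time) yields $\Norm{\Rh z - \Ih{} z}{\Linf} \le \Cinv h^{-d/2} \Norm{\Rh z - \Ih{} z}{\Ltwo}$. I would then bound this $L^2$ norm by the triangle inequality through $z$, using the $L^2$ approximation estimate for the Ritz projection (Lemma~\ref{lemma:estimates-Rh} with $\ell = 1$, which requires the standing quasi-uniformity assumption) and for the interpolant (Lemma~\ref{lemma:approx-Ih} with $\ell = 1$, admissible since $(d-2)/2 < 1$ for $d \le 3$):
\begin{equation*}
\Norm{\Rh z - \Ih{} z}{\Ltwo} \le \Norm{(\Id - \Rh) z}{\Ltwo} + \Norm{(\Id - \Ih{}) z}{\Ltwo} \lesssim h^2 \SemiNorm{z}{H^2(\Omega)}.
\end{equation*}

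Putting these together, the first term is bounded by $\Cinv h^{-d/2} \cdot C h^2 \SemiNorm{z}{H^2(\Omega)} = C h^{2 - d/2} \SemiNorm{z}{H^2(\Omega)}$. The key point, and the only genuine subtlety, is that for $d \in \{1,2,3\}$ one has $2 - d/2 \ge 1/2 > 0$, so the negative power of $h$ produced by the inverse estimate is more than compensated by the approximation order, and $h^{2-d/2}$ remains bounded (by a fixed power of $\diam(\Omega)$). Consequently both terms are bounded by $\Norm{z}{H^2(\Omega)}$ up to an $h$-independent constant, which establishes \eqref{eq:bound-Rh-Linf}. I expect no real obstacle in this argument; the points requiring care are the dimension-dependent admissibility of the embedding $H^2 \hookrightarrow \Linf$ and of Lemma~\ref{lemma:approx-Ih} at $\ell = 1$, together with the fact that the optimal $L^2$ Ritz estimate at this order indeed holds under quasi-uniformity.
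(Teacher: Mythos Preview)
Your proposal is correct and follows essentially the same approach as the paper's proof: Poincar\'e for \eqref{eq:Poincare}, and for \eqref{eq:bound-Rh-Linf} the same splitting via $\Ih z$, the inverse estimate \eqref{eq:inverse-estimate-space-L-infty} on the discrete difference, the $\mathcal{O}(h^2)$ approximation bounds at $\ell=1$, and the Sobolev embedding $H^2(\Omega)\hookrightarrow L^\infty(\Omega)$. You spell out a couple of steps (the $H^1$-seminorm stability of $\Rh$ and the triangle inequality through $z$) that the paper leaves implicit, but there is no substantive difference.
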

\begin{proof}
Bound~\eqref{eq:Poincare} follows from the standard Poincar\'e inequality (see, e.g., \cite[Prop. 5.3.2]{Brenner-Scott:book}).

As for bound~\eqref{eq:bound-Rh-Linf}, we use the triangle inequality, the inverse estimate~\eqref{eq:inverse-estimate-space-L-infty}, and Lemmas~\ref{lemma:stab-Ih} and~\ref{lemma:approx-Ih} to obtain
\begin{alignat*}{3}
\Norm{\Rh z}{\Linf} & \le \Norm{\Rh z - \Ih z}{\Linf} + \Norm{\Ih z}{\Linf} \\
& \lesssim h^{-d/2}\Norm{\Rh z - \Ih z}{\Ltwo} + \Norm{z}{\Linf} \\
& \lesssim h^{2 - d/2} \SemiNorm{z}{H^2(\Omega)} + \Norm{z}{\Linf},
\end{alignat*}
which, combined with the continuous Sobolev embedding~$H^2(\Omega) \hookrightarrow \Linf$, completes the proof.
\end{proof}

\end{document}